\documentclass[12pt]{article}
\usepackage[utf8]{inputenc}
\usepackage{amsmath}
\usepackage[T1]{fontenc}
\usepackage[english]{babel}
\usepackage{hyphenat}
\usepackage{floatrow}
\usepackage{hyperref}

\usepackage{enumerate}

\usepackage{graphicx}
\graphicspath{ {./Images/} }
\usepackage{array}
\usepackage{caption}
\usepackage{amsfonts, amsthm, xcolor}
\usepackage{url}
\usepackage[margin=1in]{geometry}
\usepackage{tikz}

\newcommand{\remove}[1]{}

\newcommand{\Co}{\mathbb{C}}

\newcommand{\N}{\mathbb{N}}

\newcommand{\Z}{\mathbb{Z}}
\newcommand{\Pp}{\mathbb{P}}
\newcommand{\un}{\mathbf{1}}

\newtheorem{Lemma}{Lemma}
\newtheorem{Statement}{Statement}
\newtheorem{Theorem}{Theorem}

\newtheorem{Claim}{Claim}
\newtheorem{Definition}{Definition}
\newtheorem{Notation}{Notation}
\newtheorem{Question}{Question}
\newtheorem{Conjecture}{Conjecture}

\newpage

\title{On the Minimum Possible Maximum Degree of Induced Subgraphs of Product Graphs}

\author{%
  Elizaveta Popova%
  \thanks{Research supported in part by ISF grant 2364/25 and Minerva grant 715025.}\\
  \small Weizmann Institute of Science, Israel
}

\date{}

\begin{document}

\maketitle
\begin{abstract}
  The following is a natural and fundamental question for a graph $G$: if an induced subgraph $H$ of $G$ has $x$ more vertices than a maximum independent set, what can be said about the maximum degree of $H$ as a function of $x$?

  The case $x=1$ is already of considerable interest. For example, in his celebrated proof of the sensitivity conjecture, Hao Huang showed that every induced subgraph of the hypercube $Q_n$ on more than $2^{n-1}$ vertices has maximum degree at least $\sqrt{n}$. Chung, F\'{u}redi, Graham, and Seymour proved that this bound is tight.

  In this paper, we study this question when $G$ is either the $n$-fold Hamming product or the $n$-fold tensor product of a triangle. For both graphs, we determine the exact minimum possible average degree of an induced subgraph of a prescribed size. We also prove that the tensor product exhibits several Huang-like phenomena. For the Hamming product, we show that, for several size densities and large $n$, the minimum possible maximum degree is asymptotically equal to the minimum possible average degree. Finally, we extend several of the results from the triangle to an arbitrary complete graph $K_k$.
\end{abstract}
\newpage
\tableofcontents
\newpage
\section{Introduction}

Huang proved the following celebrated theorem~\cite{Huang}.

\begin{Theorem}
  Every induced subgraph of the hypercube $Q_n$ on $2^{n-1}+1$ vertices has maximum degree at least $\sqrt{n}$.
\end{Theorem}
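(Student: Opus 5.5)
We follow Huang's signed-adjacency argument, built from three ingredients: a signed adjacency matrix with a large eigenvalue of high multiplicity, Cauchy interlacing, and a bound of the spectral radius by the maximum degree.

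\textbf{Step 1 (signed matrix).} Define $2^n\times 2^n$ matrices recursively by
\[
A_1=\begin{pmatrix}0&1\\1&0\end{pmatrix},\qquad
A_n=\begin{pmatrix}A_{n-1}&I\\ I&-A_{n-1}\end{pmatrix}.
\]
By induction, every entry of $A_n$ lies in $\{0,\pm1\}$, and the nonzero entries sit exactly at the positions of the edges of $Q_n$. Thus $A_n$ is a signing of the adjacency matrix of $Q_n$. Again by induction,
\[
A_n^2=\begin{pmatrix}A_{n-1}^2+I&0\\0&A_{n-1}^2+I\end{pmatrix}=nI .
\]
The cross terms cancel because $A_{n-1}-A_{n-1}=0$. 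Hence every eigenvalue of $A_n$ is $\pm\sqrt n$. Since $\operatorname{tr}A_n=0$, each of $\sqrt n$ and $-\sqrt n$ has multiplicity exactly $2^{n-1}$.

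\textbf{Step 2 (interlacing).} Let $H$ be an induced subgraph of $Q_n$ on $m=2^{n-1}+1$ vertices. Let $B$ be the principal submatrix of $A_n$ indexed by $V(H)$. Cauchy interlacing gives $\lambda_1(B)\ge\lambda_{1+2^n-m}(A_n)=\lambda_{2^{n-1}}(A_n)$. The top $2^{n-1}$ eigenvalues of $A_n$ all equal $\sqrt n$, so this yields $\lambda_1(B)\ge\sqrt n$.

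One could instead avoid interlacing. The $\sqrt n$-eigenspace of $A_n$ has dimension $2^{n-1}$, and the space of vectors supported on $V(H)$ has dimension $2^{n-1}+1$. These dimensions sum to more than $2^n$, so the two spaces meet in a nonzero vector $v$. This $v$ satisfies $Bv|_{V(H)}=\sqrt n\, v|_{V(H)}$.

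\textbf{Step 3 (degree bound).} Take an eigenvector $v$ of $B$ for the eigenvalue $\lambda=\lambda_1(B)$, and choose a coordinate $u$ maximizing $|v_u|$. Then
\[
\lambda|v_u|=\Bigl|\sum_{w\in V(H)}B_{uw}v_w\Bigr|\le \sum_{w\sim_H u}|v_w|\le \deg_H(u)\,|v_u|,
\]
since $B_{uw}$ is nonzero only when $w$ is a neighbour of $u$ in $H$, and then $|B_{uw}|=1$. Therefore $\Delta(H)\ge\deg_H(u)\ge\lambda\ge\sqrt n$.

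The only genuinely nonobvious step is finding the signing in Step 1. It must square to a scalar multiple of $I$ while remaining supported on the edges of $Q_n$. The recursive block construction achieves this, and checking the anticommuting cross terms is the key verification. The remaining steps are standard linear algebra.
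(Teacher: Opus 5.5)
Your proof is correct, and it is exactly Huang's argument: the recursively signed matrix $A_n$ with $A_n^2=nI$ and zero trace, Cauchy interlacing (or the equivalent dimension count) to get $\lambda_1(B)\ge\sqrt n$, and the maximum-coordinate estimate $\lambda_1(B)\le\Delta(H)$. The paper does not prove this theorem itself; it only cites it from Huang's work, so your proposal matches the cited argument.
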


This theorem completed the proof of the long-standing sensitivity conjecture of Nisan and Szegedy in theoretical computer science~\cite{SensConj}.

Huang's theorem is also interesting as an independent combinatorial statement and naturally suggests analogous questions for other graphs. Given a graph $G$, one may ask for the minimum possible maximum degree of an induced subgraph on $\alpha(G)+1$ vertices and, in particular, whether there is a ``jump'' analogous to that in Huang's theorem. Alon and Zheng answered this question affirmatively for every bipartite Cayley graph of $\Z_2^n$ (including the hypercube)~\cite{CayleyZ2AZ}:

\begin{Theorem}\cite{CayleyZ2AZ}
  For any Cayley graph $G=\Gamma(\Z_2^n,S)$ of $\Z_2^n$ with respect to a generating set $S$, and for any subset $U\subseteq\Z_2^n$ with $|U|>2^{n-1}$, the induced subgraph $H=G[U]$ satisfies $\Delta(H)\geq\sqrt{|S|}$.
\end{Theorem}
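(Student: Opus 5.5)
We follow Huang's signing strategy, adapted to an arbitrary generating set $S=\{s_1,\dots,s_m\}$ of $\Z_2^n$. Since $G$ is bipartite and $S$ generates $\Z_2^n$, there is a linear functional $\phi\colon\Z_2^n\to\Z_2$ with $\phi(s)=1$ for all $s\in S$; the two sides of the bipartition are the cosets of $\ker\phi$, so $\alpha(G)\ge 2^{n-1}$.

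\textbf{Step 1: a signing of the adjacency matrix.} We look for a symmetric signed adjacency matrix $A_s$ of $G$ with $A_s^2=mI$. Write $A=\sum_{i=1}^m P_i$, where $P_i$ is the permutation matrix of translation by $s_i$. Each $P_i$ is a symmetric involution, and the $P_i$ commute. We want signed versions $B_i=D_iP_i$, with $D_i$ diagonal $\pm1$ matrices, such that:
\begin{itemize}
\item each $B_i$ is symmetric with $B_i^2=I$;
\item the $B_i$ pairwise anticommute.
\end{itemize}
Then $A_s=\sum_i B_i$ satisfies $A_s^2=\sum_i B_i^2=mI$.

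To build such $B_i$, choose a basis $e_1,\dots,e_n$ of $\Z_2^n$ and write each $s_i$ in coordinates. Let $\sigma_x$ and $\sigma_z$ be the Pauli matrices, and assign to $s_i$ a tensor product of Paulis: $\sigma_x$ in the coordinates where $s_i$ has a $1$, and suitable $\sigma_z$ or identity factors elsewhere. The $\sigma_z$ factors are chosen so that every pair of operators anticommutes. Anticommutation of two Pauli words is decided by a symplectic form. So we must find vectors $z_i\in\Z_2^n$ with
\[
\langle s_i,z_j\rangle+\langle s_j,z_i\rangle=1 \quad\text{for all } i\ne j,
\]
and $\langle s_i,z_i\rangle=0$ so that each $B_i$ is a symmetric involution.

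Note that $B_i$ permutes basis vectors exactly as $P_i$ does, up to sign, so $A_s$ is a signing of $A$. We also need $B_i$ to be a real symmetric $\pm1$ matrix. The condition $\langle s_i,z_i\rangle=0$ handles this: $\sigma_x\sigma_z$ is antisymmetric, and we avoid an odd number of such factors.

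\textbf{Main obstacle.} The solvability of this linear system over $\Z_2$ is the hard step. When the $s_i$ are dependent, as they typically are since $m$ may exceed $n$, it may fail. If it fails, I would instead take the $B_i$ to be arbitrary anticommuting signings. A fallback is to lift to the cube $Q_m$ through the surjection $\Z_2^m\to\Z_2^n$, $e_i\mapsto s_i$. Huang's matrix on $Q_m$ exists, and we would try to push it down to $\Z_2^n$; this requires the signing to be compatible on the fibers.

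\textbf{Step 2: Cauchy interlacing.} Granting $A_s$, its eigenvalues are $\pm\sqrt m$. Its trace is $0$ because $G$ has no loops, so each eigenvalue has multiplicity $2^{n-1}$. For $|U|>2^{n-1}$, the principal submatrix $A_s[U]$ has largest eigenvalue at least $\lambda_{2^{n-1}}(A_s)=\sqrt m$ by interlacing. Finally, the largest eigenvalue of a signed adjacency matrix is at most the maximum absolute row sum, which is $\Delta(G[U])$. This gives $\Delta(H)\ge\sqrt{|S|}$.
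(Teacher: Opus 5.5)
Your Step 2 is correct. Step 1, however, carries the whole difficulty, you leave it open, and it cannot be carried out as formulated once $|S|$ is large. The problem is not merely that your linear system for the $z_i$ may be unsolvable.

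Suppose $B_1,\dots,B_m$ are pairwise anticommuting unitary involutions on $\mathbb{C}^{2^n}$. For each nonempty even-size $T\subseteq[m]$, the product $B_T:=\prod_{i\in T}B_i$ satisfies $B_jB_TB_j=-B_T$ for $j\in T$, so $\mathrm{tr}\,B_T=0$. Consequently the $2^{m-1}$ products $B_T$ with $|T|$ even are pairwise orthogonal in the trace inner product. This forces $2^{m-1}\le 4^n$, i.e.\ $m\le 2n+1$.

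But $|S|$ can be as large as $2^{n-1}$ even when $G$ is bipartite. For $S$ the set of the $16$ odd-weight vectors of $\Z_2^5$ one gets $G=K_{16,16}$, and $16>11$. So no choice of the $z_i$, no nonlinear or complex signs, and no pushdown of Huang's generators along the fibres of $\Z_2^m\to\Z_2^n$ can produce your $B_i$.

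Any signing with $A_s^2=mI$ must instead kill the off-diagonal terms through cancellation among distinct pairs $(s,t)$ with the same sum $s+t$. Complex unit entries are also genuinely needed. For example, $K_4=\Gamma(\Z_2^2,\{e_1,e_2,e_1+e_2\})$ has no real signing with $A_s^2=3I$. Each of its three $4$-cycles would need sign product $-1$, whereas each edge lies in exactly two of them, so the product of the three sign products is $+1$. This example also shows that the statement does not assume $G$ bipartite, contrary to your opening sentence; Step 2 never uses bipartiteness anyway.

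The idea you mention as a fallback does work, but applied to vertex sets rather than to matrices, and then no signing of $G$ is needed at all. Let $\pi\colon\Z_2^m\to\Z_2^n$ be the linear map with $\pi(e_i)=s_i$. It is onto because $S$ generates $\Z_2^n$, so $W:=\pi^{-1}(U)$ has $2^{m-n}|U|>2^{m-1}$ elements. Since $\pi(x+e_i)=\pi(x)+s_i$ and the $s_i$ are distinct, $\pi$ maps the $m$ neighbours of $x$ in $Q_m$ bijectively onto the $m$ neighbours of $\pi(x)$ in $G$. Hence $\deg_{Q_m[W]}(x)=\deg_{G[U]}(\pi(x))$ for every $x\in W$. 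Huang's theorem on $Q_m$ yields $x\in W$ with $\deg_{Q_m[W]}(x)\ge\sqrt m$, and so $\Delta(H)\ge\sqrt{|S|}$.
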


Potechin and Tsang subsequently obtained an affirmative answer for every bipartite Cayley graph of an abelian group~\cite{AbCayleyPT}. They proved the following theorem.

\begin{Theorem}\cite{AbCayleyPT}
  Let $X=\Gamma(G,S)$ be a Cayley graph of an abelian group $G$. If $U\subseteq G$ and $|U|>|G|/2$, then the induced subgraph $X[U]$ has maximum degree at least $\sqrt{(|S|+t)/2}$, where $t$ is the number of elements of order~$2$ in $S$.
\end{Theorem}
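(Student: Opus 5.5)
The plan is to follow Huang's signing strategy, adapted to abelian groups through the character decomposition. Write $S=S_2\cup S'$, where $S_2$ is the set of the $t$ elements of order $2$ and $S'$ consists of the remaining elements, which come in inverse pairs $\{s,-s\}$. Let $r=|S'|/2$, so $|S|=t+2r$ and $(|S|+t)/2=t+r$. The aim is to build a real symmetric (or Hermitian) matrix $M$ indexed by $G$ with three properties: its support is contained in the edge set of $X$; all of its entries have absolute value at most $1$; and $M^2=(t+r)I$. Once we have this, $M$ has eigenvalues $\pm\sqrt{t+r}$. We also need both signs to occur with multiplicity $|G|/2$. Given that, Cauchy interlacing applied to the principal submatrix $M_U$ with $|U|>|G|/2$ yields $\lambda_{\max}(M_U)\ge\sqrt{t+r}$. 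Finally, $\lambda_{\max}(M_U)\le\max_{u}\sum_v|M_{uv}|\le\Delta(X[U])$, which gives the bound.

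The main obstacle is constructing $M$. Each involution $s\in S_2$ and each inverse pair in $S'$ is treated as one ``generator'' contributing an operator $A_j$. For $s\in S_2$ take $A_j$ to be the translation $T_s$, which satisfies $A_j^2=I$. For a pair $\{s,-s\}$, a single generator does not fit, because $T_s+T_{-s}$ does not square to a scalar. Here we first try a combination $A_j=\frac{1}{\sqrt2}(\omega T_s+\bar\omega T_{-s})$ with a unimodular phase, possibly also twisted by a character. The requirement is that the $A_j$ pairwise anticommute and each squares to $I$; this is why a pair counts only as ``one'' toward $t+r$. Since translations commute, anticommutation has to come from multiplying by diagonal $\pm1$ (or character) operators $D_j$, in the Clifford / Jordan–Wigner manner: set $B_j=D_jA_j$ with $D_j$ diagonal sign functions $f_j:G\to\{\pm1\}$. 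These must satisfy $f_j(x+s_j)=f_j(x)$, which keeps $B_j^2=I$, and $f_i(x+s_j)f_i(x)=-1$ for $i<j$, which gives anticommutation. We would construct the $f_j$ inductively from homomorphisms $G\to\{\pm1\}$ (characters of order $2$). This requires each generator to be ``independent'' of the earlier ones modulo $2G$.

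In general it may fail, since $G$ need not have enough order-$2$ characters (e.g.\ odd order). The fallback is to work with the lift $\tilde G=G\times\Z_2$, or to realize the Clifford relations on $\Co^G\otimes\Co^{2^m}$ and then compress. If this route also proves unwieldy, an alternative is to replace $M^2=(t+r)I$ by the weaker requirement that $M$ be a signed adjacency matrix whose spectrum is symmetric around $0$ with $\lambda_{|G|/2}\ge\sqrt{t+r}$. The eigenvalue bound is then proved character by character. On each character space $\chi$, the pair $\{s,-s\}$ contributes $|\chi(s)|+|\chi(-s)|$-type terms, and averaging $\cos^2$ over the two orientations yields the factor $\tfrac12$ for non-involutions versus $1$ for involutions. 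Verifying that the half-multiplicity count survives this averaging is the step that needs the most care.

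Once the matrix is in hand, the interlacing and row-sum steps are routine, exactly as in Huang's argument.
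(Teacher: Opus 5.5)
The paper only quotes this theorem from Potechin--Tsang and gives no proof, so I am judging your proposal on its own terms. There is a genuine gap. Your outer frame is right: a Hermitian $M$ supported on the edges, with $M^2=(t+r)I$ and $\operatorname{tr}M=0$, followed by interlacing and a norm bound by the induced degree. But the construction of $M$, which you correctly single out as the whole difficulty, fails as proposed. Neither the fallback nor the alternative is developed far enough to replace it.

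\textbf{Where it fails.} For a non-involution $s$,
\[
\Bigl(\tfrac{1}{\sqrt2}\bigl(\omega T_s+\bar\omega T_{-s}\bigr)\Bigr)^2=I+\tfrac12\bigl(\omega^2T_{2s}+\bar\omega^2T_{-2s}\bigr),
\]
which equals $I$ only when $s$ has order $4$.

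The twist cannot repair this. A diagonal $D_j$ with $f_j(x+s_j)=f_j(x)$ commutes with $T_{\pm s_j}$ and squares to $I$, so $B_j^2=A_j^2$. More generally, the square of any $D_1T_s+D_2T_{-s}$ with $D_1$ an invertible diagonal contains the term $D_1D_1^{(s)}T_{2s}$, where $D_1^{(s)}=T_sD_1T_{-s}$. When $4s\neq0$, nothing else in that square lives on the translate $T_{2s}$. So ``one anticommuting involution per inverse pair'' is impossible with scalar entries.

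Scalar matrices face further obstructions. For $|G|$ odd, no zero-diagonal Hermitian $M$ with $M^2=cI$, $c>0$, exists, since its eigenvalues $\pm\sqrt c$ would have to sum to $0$. And, as you note, the $\pm1$ homomorphisms needed for Jordan--Wigner signs need not exist. Passing to $G\times\Z_2$ fixes the parity, but it adds only one new independent $\pm1$ character and does nothing about the $T_{\pm2s}$ terms.

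Your alternative route also fails as stated. A signed adjacency matrix is diagonalised by the characters only if it is translation-invariant, and a translation-invariant weighting $\sum_s c_sT_s$ with $|c_s|\le1$ cannot work. Already on $Q_n$ ($n\ge2$) its spectrum $\{\sum_i\pm c_i\}$ is symmetric and, by choosing signs greedily, meets $[-1,1]$, so $\lambda_{|G|/2}<\sqrt n$.

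\textbf{The missing idea.} Let the entries be matrices, and give each inverse pair a \emph{nilpotent} block. Let $\gamma_1,\dots,\gamma_{t+2r}$ be pairwise anticommuting Hermitian unitaries of size $2^m$ ($m\ge1$). Write $S_2=\{u_1,\dots,u_t\}$, pick one representative $s_i$ of each inverse pair, and set
\[
M=\sum_{j=1}^{t}T_{u_j}\otimes\gamma_j+\sum_{i=1}^{r}\bigl(T_{s_i}\otimes\Gamma_i+T_{-s_i}\otimes\Gamma_i^*\bigr),\qquad \Gamma_i=\tfrac12\bigl(\gamma_{t+2i-1}-\sqrt{-1}\,\gamma_{t+2i}\bigr).
\]
Then:
\begin{itemize}
\item Translations commute and blocks belonging to different generators anticommute, so all mixed terms cancel.
\item $\Gamma_i^2=0$ kills the $T_{\pm2s_i}$ terms.
\item $\Gamma_i\Gamma_i^*+\Gamma_i^*\Gamma_i=I$.
\end{itemize}
Hence $M^2=(t+r)I$, $\operatorname{tr}M=0$, and every nonzero block has operator norm $1$.

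Interlacing on $U\times[2^m]$, which contains more than half of the $|G|2^m$ indices, gives $\lambda_{\max}(M_U)\ge\sqrt{t+r}$. The block Schur bound $\|M_U\|\le\max_{u\in U}\sum_{v\in U}\|M_{uv}\|=\Delta(X[U])$ then finishes the proof, with no parity or bipartiteness assumption.

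On the $\chi$-isotypic component, with $\chi(s_i)=e^{\sqrt{-1}\,\theta_i}$, $M$ acts as $\sum_j\chi(u_j)\gamma_j+\sum_i(\cos\theta_i\,\gamma_{t+2i-1}+\sin\theta_i\,\gamma_{t+2i})$. This is where your $\cos^2$ intuition belongs. It enters as the identity $\cos^2\theta_i+\sin^2\theta_i=1$ rather than as an average, and it is exactly why each inverse pair counts once towards $t+r$.
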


They also conjectured that the same conclusion holds for all bipartite Cayley graphs. Lehner and Verret gave the first counterexamples~\cite{CayleyLV}; Garc\'{i}a-Marco and Knauer subsequently constructed three infinite families of counterexamples containing induced matchings on $\alpha(G)+1$ vertices~\cite{CayleyGK}.

Garc\'{i}a-Marco and Knauer also established Huang-like behavior in other Cayley graphs of non-abelian groups, including Coxeter groups. In addition, they used the expander mixing lemma to prove the existence of a jump in tensor products of an expander with an edge. We use the same lemma in a related way. In our non-bipartite setting, however, it does not yield the result directly; instead, it constrains the structure of vertex sets that induce subgraphs of small maximum degree. See Sections~\ref{bipart_jump} and~\ref{gap1}.

The question can be generalized further by considering induced subgraphs on larger vertex sets, not necessarily of size $\alpha(G)+1$. We study the following quantities.

\begin{Notation}
  For a nonempty graph $H$, let $\Delta(H)$ denote its maximum degree and let $a(H):=2|E(H)|/|V(H)|$ denote its average degree. For a graph $G$ and a set $F\subseteq V(G)$, let $G[F]$ be the subgraph of $G$ induced by $F$. For an integer $1\leq m\leq |V(G)|$, define
  $$\Delta_G(m):=\min_{\substack{F\subseteq V(G)\\ |F|=m}}\Delta(G[F])
  \qquad\text{and}\qquad
  a_G(m):=\min_{\substack{F\subseteq V(G)\\ |F|=m}}a(G[F]).$$
  We also set $\Delta_G(0)=a_G(0)=0$. Clearly, $\Delta_G(m)\geq a_G(m)$.
\end{Notation}

The main question is how the quantity $\Delta_G(m)$ depends on $m$ for a given graph $G$. In asymptotic statements, a nonintegral argument such as $\alpha k^n$ may be replaced by either its floor or its ceiling; this choice does not affect any stated asymptotic conclusion.

For Kneser graphs, this question has been answered for a wide range of values of $m$~\cite{KneserCEFL}. In particular, the following result implies the existence of jumps in the parameter $\Delta$.

\begin{Theorem}\cite{KneserCEFL}
  Let $(n, k, s) \in \N^3$ with $n \geq 10000ks^5$. Let $\mathcal{F}\subseteq \binom{[n]}{k}$ be such that $|\mathcal{F}| \geq \binom{n}{k} - \binom{n-s}{k}$ and suppose that $\mathcal{F} \neq \{A \in \binom{[n]}{k}: \,\, A\cap S \neq \emptyset\}$ for all $S \in \binom{[n]}{s}$. Then the subgraph of $K(n, k)$
induced by $\mathcal{F}$ has maximum degree at least

$$(1 - O(\sqrt{s^3 k/n}))\frac{s}{s+1}\cdot \binom{n-k}{k} \cdot \frac{|\mathcal{F}|}{\binom{n}{k}},$$

i.e., there exists $A\in \mathcal{F}$ such that $A$ is disjoint from at least

$$(1 - O(\sqrt{s^3 k/n}))\frac{s}{s+1}\cdot \binom{n-k}{k} \cdot \frac{|\mathcal{F}|}{\binom{n}{k}}$$
 of the sets in $\mathcal{F}$.

\end{Theorem}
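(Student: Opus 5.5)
We prove the Kneser statement of~\cite{KneserCEFL} by combining a stability result for large families in $K(n,k)$ with a spectral averaging argument. Write $N=\binom{n}{k}$ and $D=\binom{n-k}{k}$ for the degree of $K(n,k)$. Recall that the eigenvalues of $K(n,k)$ are $(-1)^i\binom{n-k-i}{k-i}$, so the second largest eigenvalue in absolute value is $\lambda=\binom{n-k-1}{k-1}=\frac{k}{n-k}D$. This ratio is small when $n\gg k$, and that is the source of the error term.

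\textbf{Step 1: compare with $s$-stars.} We show that a family $\mathcal{F}$ of size at least $\binom{n}{k}-\binom{n-s}{k}$ which is not an $s$-star (the family of sets meeting a fixed $s$-set $S$) must span many disjoint pairs. The quantity to control is the number of edges $e(\mathcal{F})$ of $K(n,k)[\mathcal{F}]$. The extremal configurations for few edges at this size are close to $s$-stars: an $s$-star is a union of $s$ stars, and the number of disjoint pairs it induces is governed by the partition into "sets hitting $S$ in exactly one element". We use a Hilton--Milner/Frankl type removal argument. If $\mathcal{F}$ has average degree below $(1-\epsilon)\frac{s}{s+1}D\frac{|\mathcal F|}{N}$, then by the expander mixing lemma
$$\Bigl|e(A,B)-\frac{D|A||B|}{N}\Bigr|\le \lambda\sqrt{|A||B|},$$
applied to pieces of $\mathcal{F}$, the family $\mathcal{F}$ must be covered, up to few sets, by $s$ stars centered at $x_1,\dots,x_s$. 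A bootstrapping step then shows that either $\mathcal{F}$ equals the star of $\{x_1,\dots,x_s\}$ (which is excluded), or some set $A\in\mathcal{F}$ avoids all the $x_i$.

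\textbf{Step 2: degree lower bound.} In the second case, consider a set $A\in\mathcal{F}$ avoiding $\{x_1,\dots,x_s\}$. Since $|\mathcal{F}|\ge \binom{n}{k}-\binom{n-s}{k}$, the family $\mathcal{F}$ contains almost all of the $s$-star. We count the sets of $\mathcal{F}$ disjoint from $A$ that meet $\{x_1,\dots,x_s\}$. This count is about $(\binom{n-k}{k}-\binom{n-k-s}{k})$, which is approximately $\frac{sk}{n}D$. That estimate is too small, so we must instead select a high-degree vertex by averaging. The plan is to bound the maximum degree from below by the average degree of a suitably chosen subfamily $\mathcal{F}'\subseteq\mathcal{F}$, namely $\mathcal{F}'=\mathcal{F}$ itself after the near-star structure is exploited. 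The $\frac{s}{s+1}$ factor should arise as follows. The members of $\mathcal{F}$ that meet $\{x_1,\dots,x_s\}$ in exactly one point $x_i$ split into $s$ classes, and $A$ forms an extra $(s+1)$-st class. Within a class there are no edges, while between classes the density is roughly $D/N$. Applying the expander mixing lemma to the $(s+1)$-partite structure gives a vertex with degree at least the complete-multipartite average $\frac{s}{s+1}D\frac{|\mathcal F|}{N}$, up to an error of size $\lambda\sqrt{\cdot}$.

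\textbf{Step 3: error terms.} We track the errors using $\lambda/D=k/(n-k)$, together with the star-removal losses of order $s^2k/n$ per class. Taking square roots arising from Cauchy--Schwarz in the mixing lemma gives the error $O(\sqrt{s^3k/n})$. The requirement $n\ge 10000ks^5$ makes this error, and the stability losses, smaller than the $\frac{1}{s+1}$ gaps that have to be preserved.

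The main obstacle is Step 1. We need the stability statement to be strong enough that the family is not merely close to a star but that the leftover sets actually yield a vertex of high degree. Otherwise a tiny perturbation of an $s$-star could keep the maximum degree low. The first approach to try is the Kruskal--Katona shadow bound on the sets removed from the star, compared against the new sets added: sets added outside $\{x_1,\dots,x_s\}$ have neighborhoods of size about $\frac{s}{s+1}$ of the star portion. This comparison is the claim on which the whole plan depends.
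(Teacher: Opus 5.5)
Step~1 carries the entire theorem, and as you formulate it, it cannot be carried out. The only tool you name, the mixing lemma for the whole Kneser graph, is too coarse at this density. With $|\mathcal F|\approx\frac{sk}{n}N$ and $\lambda\approx\frac{k}{n}D$, the error term $\lambda|\mathcal F|$ is about a $1/s$ fraction of the main term $D|\mathcal F|^2/N$, and applying the lemma to star-sized pieces of $\mathcal F$ is no better. This is exactly why the Hoffman-type bound gives only average degree about $(s-1)\frac{k}{n}D$, short of the target $\frac{s^2}{s+1}\frac{k}{n}D$. It is also why your Step~3 bookkeeping cannot produce an $O(\sqrt{s^3k/n})$ loss.

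The stability claim itself is false under an average-degree hypothesis. Take the full star of $\{x_1,\dots,x_{s-1}\}$, add random halves of the stars of $x_s$ and $x_{s+1}$ (restricted to sets avoiding $x_1,\dots,x_{s-1}$), and top up with a negligible number of sets to reach the threshold size. This family has average degree about $(s-1+\frac{1}{2s})\frac{k}{n}D$, which is below $(1-\epsilon)$ times the target whenever $\epsilon<\frac{s-1}{2s^3}$. Yet it is at constant distance from every $s$-star. Its maximum degree is at least about $(s-\frac12)\frac{k}{n}D$, so the maximum-degree hypothesis has to be used locally, not through an edge count.

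Even then, near-coverage by $s$ stars is the wrong structure to aim for. A random subset of density slightly above $\frac{s}{s+1}$ of an $(s+1)$-star has the required size and maximum degree $(1+o(1))\frac{s}{s+1}D\frac{|\mathcal F|}{N}$, and it is far from every $s$-star. In Step~2, the count $\frac{sk}{n}D$ is not too small; it is $\frac{s+1}{s}$ times the target. The real difficulty is that $\mathcal F$ may omit precisely the star sets disjoint from your chosen $A$, so no argument about a single $A$ can work.

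The paper only cites this theorem, but its appendix adapts the argument of \cite{KneserCEFL} to $K_k^{\otimes n}$ (Lemmas~\ref{lem-big-comp}--\ref{lem-jumps}), and that is the route you are missing. The stars are found one at a time, in four steps.
\begin{enumerate}
\item \emph{Level-one weight.} Suppose the part of $\mathcal F$ outside the stars found so far had small level-one Fourier weight, computed in the complement of those stars (itself a smaller Kneser graph). Then its members would on average have too many neighbours in $\mathcal F$, violating the degree bound. So that weight is large.
\item \emph{A dense star.} A fourth-moment/H\"older argument converts large level-one weight of a sparse function into a single star in which the family has constant relative density.
\item \emph{Bootstrapping.} Apply the mixing lemma to the bipartite graph between that star and its complement; for the Kneser graph its second-to-first singular value ratio is about $k/n$, which is the real source of the small error. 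Averaging the maximum-degree bound over the members of $\mathcal F$ in the star forces its relative density up to $\frac{s}{s+1}-o(1)$.
\item \emph{Averaging outside the stars.} After $s$ iterations, average over the whole part $\mathcal F'$ of $\mathcal F$ outside the $s$ stars. This part is nonempty because $\mathcal F$ is not an $s$-star, and at the threshold size $|\mathcal F'|$ equals the number of star sets missing from $\mathcal F$. This bookkeeping, together with the bipartite mixing lemma, gives members of $\mathcal F'$ average degree about $\frac{s^2}{s+1}\frac{k}{n}D$ into the stars.
\end{enumerate}
So your $(s+1)$-class heuristic becomes a proof only with all of $\mathcal F'$, not a single set, as the last class, and only after the $s$ dense stars have been located. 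Your plan also does not handle sizes well above the threshold, where ``$\mathcal F$ contains almost all of the $s$-star'' fails. Those are reduced to the threshold by random sparsification, as in Claim~\ref{cl:easy-prob-constr-k}.
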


One of the graphs studied here---the tensor power of the triangle---behaves similarly to the Kneser graph, and our proof shares several ideas with that of~\cite{KneserCEFL}.

We now recall the definitions of the two graph products considered in this paper.

\begin{Definition}
  The $n$-fold Hamming product of a graph $G$ with itself, denoted $G^{\Box n}$, has as its vertices the length-$n$ sequences of vertices of $G$. Two such sequences are adjacent if and only if they differ in exactly one position and the corresponding vertices in that position are adjacent in $G$; that is,
  $$V(G^{\Box n}) = V(G)^n = \{(v_1,\ldots,v_n) : v_i \in V(G)\},$$
  $$E(G^{\Box n}) = \{\{(v_1,\ldots,v_n), (u_1,\ldots,u_n)\} : \exists i\in [n] \text{ such that } v_j = u_j \text{ for all }j \neq i \text{ and } \{v_i, u_i\} \in E(G)\}.$$
\end{Definition}

\begin{Definition}
  The $n$-fold tensor product of a graph $G$ with itself, denoted $G^{\otimes n}$, has as its vertices the length-$n$ sequences of vertices of $G$. Two such sequences are adjacent if and only if the corresponding vertices are adjacent in every position; that is,
  $$V(G^{\otimes n}) = V(G)^n = \{(v_1,\ldots,v_n) : v_i \in V(G)\},$$
  $$E(G^{\otimes n}) = \{\{(v_1,\ldots,v_n), (u_1,\ldots,u_n)\} : \{v_i, u_i\} \in E(G) \text{ for every }i\in[n]\}.$$
\end{Definition}

\begin{Notation}
  As usual, let $K_k$ denote the complete graph on $k$ vertices. 
\end{Notation}

The principal graphs considered in this paper are $K_3^{\Box n}$ and $K_3^{\otimes n}$. Thus, our main question is:

\begin{Question}
  How do $\Delta_{K_3^{\Box n}}(m)$ and $\Delta_{K_3^{\otimes n}}(m)$ depend on $m$?
\end{Question}

We also provide some generalizations for $K_k^{\Box n}$ and $K_k^{\otimes n}$ for arbitrary $k \geq 3$.

Despite the similarity between $K_3^{\Box n}$ and the hypercube, no Huang-like jump occurs in the former. Indeed, the following folklore statement appears, for example, in~\cite{CayleyGK}.

\begin{Statement}
  The graph $K_3^{\Box n}$ contains an induced matching on $3^{n-1}+1$ vertices.
\end{Statement}

\begin{proof}
  We prove by induction on $n$ the stronger statement that there is an induced matching $M_n$ of the required size that is disjoint from the canonical independent set
  $$A_0^n:=\Big\{x\in\Z_3^n:\sum_i x_i\equiv0\pmod 3\Big\}.$$
  For $n=1$, take $M_1=\{1,2\}$. For the induction step, set
  $$M_n:=M_{n-1}\times\{0\}\;\cup\;A_0^{n-1}\times\{1,2\}.$$
  The first part induces a matching by the induction hypothesis. The second part is the disjoint union of the edges $\{(x,1),(x,2)\}$, $x\in A_0^{n-1}$, because $A_0^{n-1}$ is independent. There are no edges between the two parts, since $M_{n-1}\cap A_0^{n-1}=\emptyset$. Moreover,
  $$|M_n|=(3^{n-2}+1)+2\cdot3^{n-2}=3^{n-1}+1,$$
  and every vector in $M_n$ has nonzero coordinate sum modulo~$3$. Thus $M_n$ has the required properties.
\end{proof}

Potechin and Tsang studied a closely related question for $K_3^{\Box n}$: the maximum number of vertices in an induced subgraph of maximum degree at most~$1$~\cite{PT}. For every $n\geq6$, they constructed such a subgraph on $3^{n-1}+18$ vertices and proved bounds of the form $3^{n-1}+O(1)$ under two additional structural hypotheses. Further bounds on the minimum possible maximum degree of induced subgraphs of the Hamming graphs $K_k^{\Box n}$ appear in~\cite{Hamming}, which primarily concerns low-degree partitions of these graphs.

\paragraph{Our results.} We first determine the exact minimum possible average degree in both graphs.

\begin{Theorem}\label{thm-av}
  \begin{equation*}
  \frac{a_{K_3^{\otimes n}}(m)}{2^n} = \frac{a_{K_3^{\Box n}}(m)}{2n} = \begin{cases}
  0, & m \leq 3^{n-1};\\
  1-\frac{3^{n-1}}{m}, & 3^{n-1} \leq m \leq 2\cdot 3^{n-1};\\
  2-\frac{3^{n}}{m}, & m \geq 2\cdot 3^{n-1}.
  \end{cases}
  \end{equation*}
\end{Theorem}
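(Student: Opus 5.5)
We prove matching lower bounds for $2|E(G[F])|$ and give explicit sets $F$ attaining them. The two graphs are handled together: each is a Cayley graph on $\Z_3^n$ with independence number $\alpha:=3^{n-1}$, and $d$-regular with $d=2n$ for $K_3^{\Box n}$ and $d=2^n$ for $K_3^{\otimes n}$. Write $N=3^n$, $m=|F|$ and $e(F)=|E(G[F])|$. Multiplying the claimed formula by $dm/2$, it becomes
$$e(F)\ \ge\ \frac d2\, g(m),\qquad g(m):=\max\bigl(0,\ m-\alpha,\ 2m-N\bigr),$$
with equality for a suitable $F$ of every size $m$. On $[\alpha,2\alpha]$ we have $m-\alpha\ge 2m-N$, and on $[2\alpha,N]$ the reverse holds, so $g$ reproduces the three regimes of the statement.

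\emph{Constructions.} Partition $V$ into three independent classes of size $\alpha$, each vertex having exactly $d/2$ neighbours in each of the other two classes.
\begin{itemize}
\item For $K_3^{\Box n}$, take the classes $A_c=\{x:\sum_i x_i\equiv c\}$. A neighbour of $x$ changes one coordinate by $\pm1$, so $x$ has $n$ neighbours in each other class.
\item For $K_3^{\otimes n}$, take the classes $\{x: x_1=c\}$. A neighbour $x+y$ (all $y_i\neq0$) lies in the class $x_1+y_1$, and each value of $y_1$ gives $2^{n-1}$ choices of $y$.
\end{itemize}
For $m\le\alpha$, let $F$ be a subset of one class, so $e(F)=0$. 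For $\alpha\le m\le2\alpha$, let $F$ be one full class plus $m-\alpha$ vertices of a second class; then $e(F)=\frac d2(m-\alpha)$. For $m\ge2\alpha$, let $F$ be two full classes plus $m-2\alpha$ vertices of the third. Then
$$e(F)=\tfrac d2\,\alpha+d(m-2\alpha)=\tfrac d2(2m-N).$$
In each case the average degree $2e(F)/m$ equals the claimed value.

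\emph{Lower bounds.} The bound $e(F)\ge0$ is trivial. The other two follow from the identity
$$2e(F)=dm-e(F,F^c),$$
where $e(F,F^c)$ counts the edges between $F$ and its complement. First, $e(F,F^c)\le d|F^c|=d(N-m)$ gives $e(F)\ge\frac d2(2m-N)$. Second, $e(F)\ge\frac d2(m-\alpha)$ is equivalent to $e(F,F^c)\le d\alpha=\frac23|E(G)|$, i.e.\ to the statement that every cut contains at most $2/3$ of the edges.

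This cut bound is the main step. The spectral (Hoffman-type) bound is too weak for it: it gives only $e(F)\ge\frac d4 m(m/\alpha-1)$, which falls short by the factor $(m-\alpha)(m-2\alpha)\le0$ in the middle range. I would instead decompose $E(G)$ into edge-disjoint triangles.
\begin{itemize}
\item For $K_3^{\Box n}$, the lines $\{x,x+e_i,x+2e_i\}$ partition the edges.
\item For $K_3^{\otimes n}$, use the triangles $\{x,x+y,x+2y\}$ with $y$ having all coordinates nonzero. Each pair of vertices in such a triple differs by $\pm y$, which again has all coordinates nonzero, so the triple is a triangle. An edge $\{x,x+y\}$ lies in exactly one of them, namely the coset of the line $\langle y\rangle$ through $x$.
\end{itemize}
A triangle meets any cut in at most $2$ of its $3$ edges. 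Summing over the decomposition gives $e(F,F^c)\le\frac23|E(G)|=d\alpha$ for every $F$. Combining the three lower bounds with the constructions proves Theorem~\ref{thm-av} for both products simultaneously.
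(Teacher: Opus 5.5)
Your proof is correct, but it is organized differently from the paper's. The paper never bounds the tensor product from below directly. It realizes $K_3^{\Box n}$ as a spanning subgraph of $K_3^{\otimes n}$ through a linear map whose matrix has no zero entries (Claim~\ref{cl3}), and uses edge-transitive averaging (Claim~\ref{cl2}) to get $a_{K_3^{\otimes n}}(m)/2^n\ge a_{K_3^{\Box n}}(m)/(2n)$. After that it needs only constructions in the tensor product and lower bounds in the Hamming product. Its middle-range Hamming bound picks a direction $e=\pm e_i$ with $|F\cap(F+e)|\le|E|/n$ and applies inclusion--exclusion to $F\cup(F+e)\cup(F+2e)$. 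That is the vertex-counting form of your triangle inequality ($\binom t2\ge t-1$ on each line), restricted to a single parallel class.

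So the core inequality is the same. What you add is the triangle decomposition of $E(K_3^{\otimes n})$ by the cosets $x+\langle y\rangle$, $y\in\{1,2\}^n$, and the coordinate-sum colouring for the Hamming construction. These make each graph self-contained and remove the need for Claims~\ref{cl2} and~\ref{cl3}. Your argument in fact proves the formula for every regular graph that is properly $3$-colourable and whose edges decompose into triangles; the colour classes are then forced to be equal, with $d/2$ neighbours in each other class. That gives a clean explanation of why the two normalized profiles coincide. The paper's averaging route, in turn, needs only one parallel class of cliques. This is what lets it extend to $K_k^{\otimes n}$ for every $k$ in Appendix~\ref{sec-gen-k}, since for composite $k$ the cosets $x+\langle y\rangle$ are in general no longer cliques.
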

At first sight, Theorem~\ref{thm-av} is somewhat surprising: after normalization, the two parameters are identical, even though the graphs differ substantially. Claim~\ref{cl3} partly explains this coincidence.

Our main result for the tensor product establishes Huang-like jumps at two points.

\begin{Theorem}\label{thm-jump}
  There exists an absolute constant $c > 0$ such that 
  \begin{enumerate}[a.]
  \item\label{thm:jump0} $\Delta_{K_3^{\otimes n}}(3^{n-1} + 1) \geq c\cdot2^n$ (while $\Delta_{K_3^{\otimes n}}(3^{n-1}) = 0$);
  \item\label{thm:jumphalf} $\Delta_{K_3^{\otimes n}}(2\cdot 3^{n-1} + 1) \geq 2^{n-1} + c\cdot2^n$ (while $\Delta_{K_3^{\otimes n}}(2\cdot 3^{n-1}) = 2^{n-1}$).
  \end{enumerate}
\end{Theorem}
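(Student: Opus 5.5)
We prove both parts by combining spectral information about $G=K_3^{\otimes n}$ with a stability (junta-type) statement for almost-independent sets. The eigenvalues of $K_3$ are $2,-1,-1$, so $G$ is $2^n$-regular with eigenvalues $2^n(-1/2)^{|S|}$, indexed by characters $\chi_a$, $a\in\Z_3^n$, where $S=\mathrm{supp}(a)$.

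\textbf{Upper bounds and constructions.} For the bracketed equalities, a maximum independent set is $\{x:x_1=0\}$, of size $3^{n-1}$ (Hoffman bound $3^n\cdot\frac{1/2}{1+1/2}$). Hence $\Delta(3^{n-1})=0$. For $2\cdot3^{n-1}$ we take $F=\{x:x_1\neq0\}$. Two vertices of $F$ are adjacent only if their first coordinates are $1$ and $2$ (or $2$ and $1$), and each vertex then has exactly $2^{n-1}$ neighbours in $F$. The matching lower bound $\Delta\ge a=2^{n-1}$ follows from Theorem~\ref{thm-av}.

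\textbf{Part a.} Let $|F|=3^{n-1}+1$ and suppose $\Delta(G[F])\le c2^n$. First we use the expander mixing lemma in Hoffman form. Writing $f=\un_F$ with density $\mu=|F|/3^n$, we have
$$\langle f,Af\rangle = 2^n\sum_a (-1/2)^{|a|}\hat f(a)^2 \le c2^n\mu.$$
Together with $\sum_a\hat f(a)^2=\mu$ and $\hat f(0)=\mu^2$, this forces all but $O(c)\mu$ of the Fourier weight onto level $1$ (the only level with eigenvalue $\le -1/2$). In addition, levels $\ge2$ carry total weight $O(c\mu)$.

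Next we apply a Friedgut--Kalai--Naor type theorem over $\Z_3^n$ (or a direct argument in the spirit of~\cite{KneserCEFL}). This shows that $f$ is $O(c)\cdot 3^n$-close to a dictator-type function $\un\{x_i=b\}$. The structural step is then the following. Let $D=\{x_i=b\}$. Since $|F|>|D|$, $F$ contains some set $F\setminus D$ of size at least $|F\cap\overline D|\ge |D\setminus F|+1$, yet of size only $O(c)3^n$. Each $y\in F\setminus D$ with $y_i\ne b$ has exactly $2^{n-1}$ neighbours inside $D$ (those with $x_i=b$). All but $|D\setminus F|$ of these neighbours lie in $F$. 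Double counting edges between $F\setminus D$ and $D\setminus F$, each $z\in D\setminus F$ receives at most $2^n$ of them. Thus some $y$ has at least $2^{n-1}-2^n|D\setminus F|/|F\setminus D|$ neighbours in $F$. This quantity is not immediately positive, so instead we argue with the degrees of the vertices of $D\cap F$ into $F\setminus D$. The graph between $\{x_i=b\}$ and $\{x_i=b'\}$ is biregular of degree $2^{n-1}$ and, after deleting the coordinate, is the tensor product $K_3^{\otimes(n-1)}$ itself, which is an expander. Applying expander mixing to the bipartite pair $(F\cap D,\,F\setminus D)$, whose sizes are $\approx3^{n-1}$ and $\ge1$ but small, then gives a vertex in $F\setminus D$ of degree $\ge 2^{n-1}(1-O(\sqrt c))$. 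This is the point at which $|F|>3^{n-1}$ is used.

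\textbf{Part b.} The same scheme applies with complements. If $|F|=2\cdot3^{n-1}+1$ and $\Delta\le 2^{n-1}+c2^n$, the spectral computation shows that $f$ has almost all its nonconstant weight on level $1$. Stability then gives that $F$ is close to a union of two dictator slices $\{x_i\neq b\}$. The extra vertex $y$ with $y_i=b$ has $2^n$ neighbours, all with $y_i$ coordinate different from $b$, and almost all of these lie in $F$. Hence either $y$ has degree close to $2^n\ge2^{n-1}+c2^n$, or many vertices are missing from the two slices. In the latter case we again compare the deficit against the excess using mixing, showing that some vertex inside the slices gains $c2^n$ extra neighbours.

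\textbf{Main obstacle.} The main obstacle is the stability step. A direct spectral bound only controls the average degree, and Theorem~\ref{thm-av} shows that the average degree can be nearly $0$, so we must genuinely convert ``Fourier weight concentrated on level $1$'' into ``close to a dictator'' with linear error. We first try the Kindler--Safra/FKN theorem for product spaces with $p=1/3$. If that proves awkward, we follow~\cite{KneserCEFL}: find a coordinate whose influence dominates using level-$1$ concentration, then bootstrap via the regularity of the slices.
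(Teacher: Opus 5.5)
Your route is the paper's. The spectral identity caps the weight on levels $\geq2$ at $\frac83\alpha\varepsilon$, since otherwise the average degree is already too large. The ADFS/FKN-type lemma then makes $\un_F$ close to a function of one coordinate. Finally, bipartite expander mixing across the slices of that coordinate, using the excess $|F|>3^{n-1}$ (resp.\ $2\cdot3^{n-1}$), gives the contradiction. The paper reads the last step as ``small maximum degree forces nearly balanced slices, so $\un_F$ is far from every one-coordinate function.'' You read it as ``closeness to a dictator forces a high-degree vertex.'' That is the same computation in the other direction. The final mixing step, however, is misstated in both parts.

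\textbf{Part a.} Mixing does not give degree $2^{n-1}(1-O(\sqrt c))$. Work in your two-slice graph, where $\sigma_2=2^{n-2}$. Let $m=|D\setminus F|$, and let $T'$ be the larger of $F\cap\{x_i=b'\}$ and $F\cap\{x_i=b''\}$. Then
$$\frac{e(F\cap D,T')}{|T'|}\geq 2^{n-1}\Bigl(1-\frac{m}{3^{n-1}}\Bigr)-2^{n-2}\sqrt{\frac{m}{|T'|}}.$$
The error term is governed by $m/|T'|$, not by $m/3^{n-1}$. Both $m$ and $|T'|$ are $O(c)3^n$, and the only control is $|T'|>m/2$, which is exactly where $|F|>3^{n-1}$ enters. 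You therefore get $2^{n-1}(1-1/\sqrt2-O(c))\approx0.29\cdot2^{n-1}$. This is the $2-\sqrt2$ in the paper's inequality $4\varepsilon\geq\alpha_0'(2-\sqrt{2(1+\alpha_2'/\alpha_0')})$ in the near-dictator regime. It still contradicts $\Delta\leq c2^n$ for small $c$, so the argument survives, but not with the bound you state.

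\textbf{Part b.} The claim ``almost all of these lie in $F$'' does not follow from $L_2$-closeness. Also, the large-degree vertex should be sought among the excess vertices $S=F\cap\{x_i=b\}$, not inside the slices. Set $M=\{x_i\neq b\}\setminus F$. Each $y\in S$ has all $2^n$ neighbours in $\{x_i\neq b\}$, so $\deg_F(y)\leq2^{n-1}+c2^n$ forces $|N(y)\cap M|\geq2^{n-1}-c2^n$. Now apply mixing in the graph between $\{x_i=b\}$ and $\{x_i\neq b\}$, where $\sigma_2=\sqrt2\,2^{n-2}$. Since $|S|=|M|+1$ and $|M|=O(c)3^n$, the average of $|N(y)\cap M|$ over $y\in S$ is at most
$$2^{n-1}\frac{|M|}{3^{n-1}}+\sqrt2\,2^{n-2}\sqrt{\frac{|M|}{|S|}}<\frac{2^{n-1}}{\sqrt2}+O(c)2^n.$$
This is a contradiction for small $c$. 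The paper instead sorts the slices and mixes $\{x_1\in\{0,1\}\}$ against $\{x_1=2\}$. That gives $\alpha_2\geq\frac23-O(\varepsilon)$, so $\un_F$ is far from every one-coordinate function. Either version works once the estimate is actually carried out.
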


The main tool for proving this theorem is Fourier analysis on the group $\Z_3^n$.

For powers of the Hamming graph, we prove that at several special densities $|F|/3^n$, the minimum possible maximum degree is asymptotically equal to the minimum possible average degree.

\begin{Theorem}\label{thm-ex-hamming}
  For each $\alpha \in \{1 - \frac{1}{2\ell + 1}\}_{\ell \geq 1} \cup \{\frac{2(2^\ell - 1)}{4(2^\ell-1) + 1}\}_{\ell \geq 1} \cup \{\frac{2\ell + 1}{4\ell + 3}\}_{\ell \geq 1} \cup \{\frac{2}{3}\big(1 - \frac{1}{2\ell+1}\big)\}_{\ell \geq 1}$ there is a sequence of sets $F_n \subset \{0,1,2\}^n$ such that $|F_n| = \alpha\cdot 3^n(1-o(1))$ and $\Delta(K_3^{\Box n}[F_n]) = a_{K_3^{\Box n}}(\alpha\cdot 3^{n})(1+o(1))$.
\end{Theorem}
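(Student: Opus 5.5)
The plan is to reduce the construction of each $F_n$ to a finite combinatorial design problem on a small "state space", and to use concentration to get degrees that are uniform up to $o(n)$. The lower bound $\Delta \ge a$ is automatic. By Theorem~\ref{thm-av}, the targets are explicit:
\begin{itemize}
\item for $\alpha \ge 2/3$ we need $\Delta(K_3^{\Box n}[F_n]) \le 2n\bigl(1-\tfrac{1-\alpha}{\alpha}\bigr)(1+o(1))$, which for $\alpha=\tfrac{2\ell}{2\ell+1}$ equals $2n\bigl(1-\tfrac{1}{2\ell}\bigr)(1+o(1))$;
\item for $1/3\le \alpha \le 2/3$ we need $\Delta \le 2n\bigl(1-\tfrac{1}{3\alpha}\bigr)(1+o(1))$.
\end{itemize}
So only the upper bound (a construction) has to be proved. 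I will look for $F_n$ of the form $F_n=\varphi^{-1}(S)\cap T_n$, where the pieces are as follows.
\begin{itemize}
\item $\varphi:\{0,1,2\}^n\to \Gamma$ is a "block statistic" map into a finite set $\Gamma$. The candidates are: split $[n]$ into $t$ blocks of prescribed proportions $w_1,\dots,w_t$, and record for each block the coordinate sum mod $3$, or the integer sum modulo some $M$.
\item $S\subseteq\Gamma$ is the chosen set of states.
\item $T_n$ is the set of typical vectors, in which every value $0,1,2$ occurs in every block with frequency $1/3\pm n^{-1/3}$.
\end{itemize}
Discarding atypical vectors costs only an exponentially small fraction of the vertices, so $|F_n|=\alpha 3^n(1-o(1))$ as soon as $|\varphi^{-1}(S)|/3^n\to\alpha$. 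Moreover, for typical $x$ the degree of $x$ in $F_n$ equals, up to $o(n)$, a fixed linear expression $n\cdot d_S(\varphi(x))$. Here $d_S(s)=\sum_j w_j\,c_j(s)$, and $c_j(s)$ is the (frequency-weighted) number of single-coordinate moves inside block $j$ that keep the state in $S$. Hence
$$\Delta\bigl(K_3^{\Box n}[F_n]\bigr)\le n\max_{s\in S} d_S(s)+o(n),$$
and each case reduces to exhibiting a weighted state graph and a set $S$ whose states all have weighted degree equal to the normalized target.

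Next I will use two composition tools so that whole families come out of a few base gadgets.
\begin{itemize}
\item \emph{Product rule.} If $F'\subseteq\Z_3^{n_1}$ and $F''\subseteq\Z_3^{n_2}$, then $F'\times F''$ has density equal to the product of the two densities and degree $\deg'+\deg''$. Taking $n_1=\beta n$ and $n_2=(1-\beta)n$ interpolates between gadgets.
\item \emph{Complement/union tricks,} such as the one in the Statement, where $M_n=M_{n-1}\times\{0\}\cup A_0^{n-1}\times\{1,2\}$.
\end{itemize}
The basic gadgets are the following.
\begin{itemize}
\item The nonzero-sum set $\{x:\sum x_i\not\equiv0\}$ has density $2/3$ and degree exactly $n$, which matches the target for $\alpha=2/3$.
\item The canonical independent set $A_0^n$ has density $1/3$ and degree $0$.
\end{itemize}
For the family $\tfrac{2\ell}{2\ell+1}$, I would try $\varphi(x)=(\text{block sums mod }3)\in\Z_3^{t}$ and take $S$ to be the complement of a perfect code-like set $C\subseteq\Z_3^t$ of density $1/(2\ell+1)$. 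This generalizes "remove $A_0$". The condition to check is that every $s\notin C$ has the same weighted number of neighbours in $C$, namely $2n\cdot\tfrac{1}{2\ell}$ in total. This is an equitable-partition condition on the weighted Hamming graph of $\Z_3^t$. If no subset of $\Z_3^t$ has density exactly $1/(2\ell+1)$, then densities only approach it, and I would instead use integer sums modulo $2\ell+1$ with weighted steps $\pm1,\pm2$ (frequencies $2/3$ and $1/3$). The other three families look like products or "nested" versions of these gadgets with the $\tfrac{2}{3}$-gadget and the $\tfrac13$-independent gadget. For instance, $\tfrac23\bigl(1-\tfrac{1}{2\ell+1}\bigr)$ suggests intersecting the $\tfrac{2\ell}{2\ell+1}$ construction with a nonzero-sum condition on an independent block, and $\tfrac{2\ell+1}{4\ell+3}$ and $\tfrac{2(2^\ell-1)}{4(2^\ell-1)+1}$ suggest an iterated recursion in the style of $M_n$.

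The main obstacle is the finite design step: for each $\alpha$, finding $\Gamma$, the block weights and $S$ so that the weighted degree is \emph{constant} on $S$ and equals the extremal value. Equivalently, the extremal sets from Theorem~\ref{thm-av} must admit "degree-regular" versions, meaning that $S$ induces a regular weighted subgraph with exactly the number of boundary edges forced by the equality case of the average-degree bound. I would attack this by first examining the equality case in the proof of Theorem~\ref{thm-av}, to see which edge-count identity has to hold. I would then search for $S$ by solving the resulting small linear system for the block weights $w_j$, starting with $\ell=1,2$ to guess the pattern, before verifying the general $\ell$ by induction on $\ell$ using the product and recursion rules.
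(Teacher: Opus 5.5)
Your framework matches the paper's template: typical vectors, a block statistic $\varphi$, a set $S$ of allowed states, and a degree bound from counting single-coordinate moves block by block. But the proposal stops exactly where the proof begins. You leave ``the finite design step'' open, and that step is the entire content of the theorem.

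For $\alpha=1-\frac{1}{2\ell+1}$, the missing idea is as follows. Cut $[n]$ into $2\ell$ equal blocks $I_1,\dots,I_{2\ell}$. Use the \emph{integer} weighted sum $\sum_{q=1}^{2\ell}q\sum_{i\in I_q}x_i$ modulo $2\ell+1$, and delete one residue class $R$, chosen by averaging. In block $I_q$, the moves $0\to1$ and $1\to2$ shift the residue by $q$, the moves $1\to0$ and $2\to1$ by $-q$, the move $0\to2$ by $2q$, and $2\to0$ by $-2q$. Since $2$ is invertible modulo $2\ell+1$, each nonzero shift $r$ is produced by $\frac{n}{3\ell}+\frac{n}{3\ell}+\frac{n}{6\ell}+\frac{n}{6\ell}=\frac{n}{\ell}$ of the $2n$ moves of a typical vertex, up to a factor $1+o(1)$. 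So every vertex loses $\frac{n}{\ell}$ moves, and $\Delta\le 2n(1-\frac1{2\ell})(1+o(1))$.

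The two families below $\frac12$ need a different design.
\begin{itemize}
\item Take an odd modulus: $4s+1$ with $s=2^\ell-1$, respectively $4\ell+3$.
\item Let $S$ be the alternating residue set $\{R+1,R+3,\dots\}$, of size $2s$, respectively $2\ell+1$. It contains no two cyclically consecutive residues.
\item Choose block lengths and multipliers so that every shift other than $\pm1$ lies in a pair $\{c,c+1\}$ of equally frequent shifts. At most one shift of each pair can return to $S$, and a shift by $\pm1$ never returns. For the first family, use blocks of length proportional to $2^q$ with multiplier $2^{\ell-q}-1$, $q=0,\dots,\ell-1$. For the second, use the value maps $\phi_t(0)=0$, $\phi_t(1)=1$, $\phi_t(2)=2t$ on $\ell$ blocks, plus a final block with multiplier $2\ell+1$.
\end{itemize}
None of these is a product or an $M_n$-style recursion. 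Solving a small linear system for the $w_j$ and guessing from $\ell=1,2$ does not tell you which state space, multipliers and $S$ to take.

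Two of your suggested routes would fail outright. The product rule can never give an asymptotically optimal example at density above $\frac13$, and all four families lie there. A product of pieces of densities $\alpha_1,\alpha_2<1$, on proportions $\beta,1-\beta$ of the coordinates, has normalized maximum degree at least $\beta\bar a_\Box(\alpha_1)+(1-\beta)\bar a_\Box(\alpha_2)\ge\min_i\bar a_\Box(\alpha_i)$. Since $\bar a_\Box$ is strictly increasing on $[\frac13,1]$, this strictly exceeds $\bar a_\Box(\alpha_1\alpha_2)$ whenever $\alpha_1\alpha_2>\frac13$. Concretely, a nonzero-sum condition on a separate block of the $\frac{2\ell}{2\ell+1}$ construction gives normalized degree at least $\frac12$, against the target $\frac12(1-\frac1{2\ell})$.

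The paper instead uses Claim~\ref{claim-mult-23}. It lifts $F$ to $\{(x,x')\in\Z_3^{2n}:x-x'\in F\}$. There, every vertex's neighbourhood splits into two halves, each of size exactly $\deg_F(x-x')$, according to whether the total coordinate sum rises or falls by $1$ modulo $3$. Deleting the sparsest class of the total sum then leaves each survivor with only one half. Alternatively, for the family-one construction you could check directly that its excluded moves split evenly, $\frac{n}{2\ell}$ each, between the two mod-$3$ directions, and then delete the sparsest class of the \emph{total} sum. Either way, the condition must be on the whole vector, not on an independent block.

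Finally, as you suspected, the complement-of-a-code idea with block sums mod $3$ cannot reach density exactly $\frac1{2\ell+1}$ unless $2\ell+1$ is a power of $3$. Your integer-sum fallback is the right direction, but it works only with specific multipliers such as those above.
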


\paragraph{Structure of the paper.}
Section~\ref{sec-average} proves Theorem~\ref{thm-av}, which gives the first natural lower bound on the minimum possible maximum degree. Section~\ref{sec:tensor} treats the tensor product, while Section~\ref{sec:hamming} treats the Hamming product. In Subsection~\ref{sec:proof-jump}, we establish Huang-like behavior for the tensor product and for its large canonical bipartite subgraphs; in particular, we prove Theorem~\ref{thm-jump}. Subsection~\ref{ssec:ex-tensor} gives optimal constructions and upper bounds. Subsections~\ref{ssec:no-big-perfect} and~\ref{ssec:no-small-perfect} identify ranges in which the minimum possible maximum and average degrees cannot coincide, whereas Subsection~\ref{ssec:ex-tensor} gives several densities at which they do coincide. Section~\ref{sec:hamming} presents the constructions asserted in Theorem~\ref{thm-ex-hamming}. For the Hamming product, our only general lower bound is the average-degree bound. Subsection~\ref{subsec-hamming-codes} gives exactly optimal constructions derived from well-known Hamming codes, and the remainder of Section~\ref{sec:hamming} develops additional constructions with reasonably small, though not always optimal, maximum degree. Section~\ref{sec:open-q} discusses open problems and future directions. Appendix~\ref{sec-gen-k} extends several results to $K_k^{\otimes n}$ and $K_k^{\Box n}$.

\section{Exact answer for average degree}\label{sec-average}

\begin{Claim}\label{cl1}
  Let $G$ be any $d$-regular graph. Then
  $$\frac{a_G(m)}{d}\geq 2-\frac{|V(G)|}{m}$$
  for every $m\geq |V(G)|/2$.
\end{Claim}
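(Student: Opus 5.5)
We prove the bound by double counting the edges between $F$ and its complement. Fix $F\subseteq V(G)$ with $|F|=m\geq |V(G)|/2$, write $N:=|V(G)|$, and let $\overline{F}:=V(G)\setminus F$, so $|\overline{F}|=N-m$. Since $G$ is $d$-regular, summing degrees over $F$ gives
$$dm=\sum_{v\in F}\deg_G(v)=2|E(G[F])|+e(F,\overline{F}),$$
where $e(F,\overline{F})$ is the number of edges with one endpoint in $F$ and the other in $\overline{F}$.

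The key step is to bound $e(F,\overline{F})$ from above using the vertices outside $F$. Each vertex of $\overline{F}$ has at most $d$ neighbours, and in particular at most $d$ neighbours in $F$, so $e(F,\overline{F})\leq d(N-m)$. Substituting this into the identity gives
$$2|E(G[F])|\geq dm-d(N-m)=d(2m-N).$$
Dividing by $m$, which is positive since $m\geq N/2>0$, yields $a(G[F])\geq d\,(2-N/m)$.

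Since $F$ was an arbitrary set of size $m$, taking the minimum over all such $F$ gives $a_G(m)/d\geq 2-N/m$, as claimed. The hypothesis $m\geq N/2$ is only needed so that the right-hand side is nonnegative and the bound is meaningful; the inequality itself holds for any $m\geq 1$.

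The argument is elementary and I expect no real obstacle. The one point to state explicitly is the direction of the cut bound: we bound $e(F,\overline{F})$ by counting from the smaller side $\overline{F}$, using only $\deg_G(u)\leq d$ for each $u\in\overline{F}$. For $G=K_3^{\Box n}$ with $d=2n$ and $G=K_3^{\otimes n}$ with $d=2^n$, this gives the lower bound in the third range of Theorem~\ref{thm-av}.
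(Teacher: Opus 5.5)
Your proof is correct and is essentially the paper's argument: the paper writes the degree-sum identities for both $F$ and $V\setminus F$ and subtracts them to get $|E(F)| = d(|F|-|V|/2) + |E(V\setminus F)|$. Your cut bound $e(F,\overline{F})\le d(N-m)$ is that same identity with the nonnegative slack term $2|E(V\setminus F)|$ dropped.
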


\begin{proof}
  Let $E(F)$ denote the set of edges with both endpoints in $F$, let $E(F,V\setminus F)$ denote the set of edges between $F$ and $V\setminus F$, and let $E(V\setminus F)$ denote the set of edges with both endpoints in $V\setminus F$. Then
$$2|E(F)|+|E(F,V\setminus F)|=d|F|,$$
$$2|E(V\setminus F)|+|E(F,V\setminus F)|=d(|V|-|F|).$$
Subtracting the second identity from the first gives
$$|E(F)|=d\Big(|F|-\frac{|V|}{2}\Big)+|E(V\setminus F)|\geq d\Big(|F|-\frac{|V|}{2}\Big),$$
which proves the claim.
\end{proof}

\begin{Claim}\label{cl2}
  Suppose that $G$ is a $D$-regular edge-transitive graph and that $H$ is a spanning $d$-regular subgraph of $G$. Then, for every $m$, we have $\frac{a_G(m)}{D} \geq \frac{a_H(m)}{d}$.
\end{Claim}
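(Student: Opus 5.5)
The plan is an averaging argument over the automorphism group of $G$. Fix $m$ and a set $F\subseteq V(G)$ with $|F|=m$ and $a(G[F])=a_G(m)$, so that $|E(G[F])| = a_G(m)\,m/2$. Let $\Gamma=\mathrm{Aut}(G)$. For each $\sigma\in\Gamma$, the image $\sigma(H)$ is a spanning $d$-regular subgraph of $G$ isomorphic to $H$. Hence $\sigma(H)[F]$ is isomorphic to $H[\sigma^{-1}(F)]$, and $|\sigma^{-1}(F)|=m$. This gives
$$|E(\sigma(H)[F])| \;\geq\; \frac{a_H(m)\,m}{2}\qquad\text{for every }\sigma\in\Gamma.$$

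Next I average this inequality over $\sigma\in\Gamma$, chosen uniformly at random. For a fixed edge $e\in E(G)$, consider the probability that $e\in\sigma(H)$, which equals the probability that $\sigma^{-1}(e)\in E(H)$. Because $G$ is edge-transitive, the orbit of $e$ under $\Gamma$ is all of $E(G)$, and each edge is hit equally often. So $\sigma^{-1}(e)$ is uniformly distributed on $E(G)$, and the probability equals
$$\frac{|E(H)|}{|E(G)|}=\frac{d|V|/2}{D|V|/2}=\frac{d}{D}.$$
Here it matters that $H$ is spanning and $d$-regular. By linearity of expectation,
$$\mathbb{E}_\sigma\,|E(\sigma(H)[F])| = \frac{d}{D}\,|E(G[F])| = \frac{d}{D}\cdot\frac{a_G(m)\,m}{2}.$$

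Combining the two displays gives $\frac{d}{D}\,a_G(m)\geq a_H(m)$, which is exactly the claim. The case $m=0$ is trivial by the convention $a_G(0)=a_H(0)=0$.

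The only step needing care is the uniformity of $\sigma^{-1}(e)$ over $E(G)$. I would justify it by orbit–stabilizer. Edge-transitivity means $\Gamma$ acts transitively on unordered edges, so each edge $e'$ is the image of $e$ under exactly $|\mathrm{Stab}(e)|$ elements of $\Gamma$. These elements are the same for every $e'$ in number, so the distribution is uniform. No arc-transitivity is needed, since membership of an unordered edge in $\sigma(H)$ does not depend on orientation.
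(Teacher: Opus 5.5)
Your proposal is correct and matches the paper's argument. Both average over a uniformly random automorphism $\sigma$ of $G$, use edge-transitivity to show each edge of $G[F]$ lies in the relevant copy of $H$ with probability $|E(H)|/|E(G)|=d/D$, and compare the expectation with $m\,a_H(m)/2$. Working with $\sigma(H)[F]\cong H[\sigma^{-1}(F)]$ instead of $H[\sigma(F)]$ is only a notational variant, and your orbit--stabilizer justification of uniformity fills in a step the paper leaves implicit.
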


\begin{proof}
  Let $F\subseteq V(G)$ have size $m$, and let $\sigma$ be a uniformly random automorphism of $G$.
  Then $$\frac{ma_H(m)}{2} \leq \mathbb{E}[|E(H[\sigma(F)])|] = \sum_{e\in E(G[F])}\Pp[\sigma e\in E(H)] = |E(G[F])|\frac{|E(H)|}{|E(G)|} = |E(G[F])|\frac{d}{D},$$
  where we used edge-transitivity and linearity of expectation. Consequently,
  $$\frac{a_G(m)}{D} \geq \frac{a_H(m)}{d}.$$
\end{proof}

\begin{Claim}\label{cl3}
  $K_3^{\Box n}$ is isomorphic to a subgraph of $K_3^{\otimes n}$.
\end{Claim}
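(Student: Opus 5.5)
We need a bijection $\phi:\Z_3^n\to\Z_3^n$ that maps edges of $K_3^{\Box n}$ to edges of $K_3^{\otimes n}$. Identify the vertices of $K_3$ with $\Z_3$. Then $x,y$ are adjacent in $K_3^{\otimes n}$ exactly when $x_i\neq y_i$ for every $i$. They are adjacent in $K_3^{\Box n}$ exactly when $x-y$ has exactly one nonzero coordinate. So it suffices to find an invertible linear map $T$ over $\Z_3$ that sends every vector of weight one to a vector with all coordinates nonzero. Then $\phi(x)=Tx$ works: if $x-y=\lambda e_j$ with $\lambda\neq0$, then $\phi(x)-\phi(y)=\lambda Te_j$, and every coordinate of this is nonzero.

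The columns of $T$ are the images $Te_j$, so I need an invertible $n\times n$ matrix over $\Z_3$ with no zero entries. One construction is $T=J-I$, with entries $0$ on the diagonal replaced suitably. A cleaner choice is $T=J+I$, where $J$ is the all-ones matrix. It has diagonal entries $2$ and off-diagonal entries $1$, so every entry is nonzero. Its eigenvalues are $1$, with multiplicity $n-1$, and $n+1$, so $\det T=n+1$.

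This determinant vanishes mod $3$ when $n\equiv2\pmod3$, and that case is the main obstacle. I would handle it by using $J+I$ or $J-I$, whichever is invertible. The matrix $J-I=J+2I$ has diagonal entries $0$, which is bad, so that alternative fails. Instead I would switch to a lower-triangular matrix $L$ with $1$ on and below the diagonal and $2$ above it. Every entry is nonzero. Subtracting each row from the next gives rows $(0,\dots,0,-1,\dots)$ patterns, which should make the determinant easy to compute.

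A simpler and uniform option is $T_{ij}=1$ for $i\geq j$ and $T_{ij}=2$ for $i<j$. Replacing row $i$ by row $i$ minus row $i-1$, for $i\geq2$, leaves in row $i$ only entry $i-1$ equal to $1-2=-1$, entry $i$ equal to $1-2=-1$, and zeros elsewhere. The determinant then reduces to a bidiagonal-plus-first-row computation. Expanding gives $\pm(1+2+\dots)$-type sums, which I would need to verify are nonzero mod $3$.

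If that verification fails, the fallback is induction on $n$, building the matrix as a block matrix. Start from an invertible matrix $A_{n-1}$ with no zero entries, and form
$$A_n=\begin{pmatrix}A_{n-1}&\mathbf 1\\ \mathbf 1^T & c\end{pmatrix}.$$
By the Schur complement, $\det A_n=\det A_{n-1}\,(c-\mathbf 1^TA_{n-1}^{-1}\mathbf 1)$. Of the two values $c\in\{1,2\}$, at most one makes this vanish, so some nonzero $c$ keeps $A_n$ invertible. The base case is $A_1=(1)$. This guarantees the construction for all $n$, and I would present it as the main proof. The needed bijection is then $\phi(x)=A_nx$.
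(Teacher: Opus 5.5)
Your final argument, the Schur-complement induction, is correct and follows the paper's route: reduce to an invertible matrix over $\Z_3$ with no zero entries. The only difference is that the paper writes down an explicit $M$ (first row all ones, other rows all ones with a $2$ on the diagonal; subtracting the first row leaves a triangular matrix with determinant $1$), whereas you prove existence inductively, which is valid because at most one of $c\in\{1,2\}$ equals the scalar $\mathbf 1^T A_{n-1}^{-1}\mathbf 1$.

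Incidentally, your abandoned row reduction was miscomputed. For $T_{ij}=1$ ($i\geq j$), $T_{ij}=2$ ($i<j$), row $i$ minus row $i-1$ is exactly $-e_i^T$, since entry $i-1$ becomes $1-1=0$. So that matrix reduces to a triangular one with determinant $(-1)^{n-1}$ and would already have given an explicit choice like the paper's.
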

The Hamming product is not a subgraph of the tensor product under the standard labeling of the common vertex set $\{0,1,2\}^n$. Rather, it becomes a subgraph after the relabeling described below.

\begin{proof}
  Regard the vertices of both graphs as vectors in $\Z_3^n$. Multiplication by any nonsingular matrix over $\Z_3$ with no zero entries gives the required embedding. For example, take
  $$M = \begin{pmatrix}
  1&1&1&\cdots&1\\
  1&2&1&\cdots&1\\
  1&1&2&\cdots&1\\
  \vdots&\vdots&\vdots&\ddots&\vdots\\
  1&1&1&\cdots&2
\end{pmatrix}.$$
The matrix $M$ is nonsingular: subtracting the first row from every other row yields an upper-triangular matrix with nonzero diagonal entries.

Multiplication by such a matrix indeed sends the edges of $K_{3}^{\Box n}$ to the edges of $K_{3}^{\otimes n}$: two vertices $x$ and $y$ are adjacent in $K_{3}^{\Box n}$ if $x - y = \pm e_i$ for some $i \in [n]$, where $\{e_i\}_i$ is the standard basis of $\Z_3^n$. Then $Mx-My$ is, up to sign, a column of $M$ and therefore has no zero coordinates, which implies that $Mx$ and $My$ are adjacent in $K_3^{\otimes n}$.
\end{proof}
\begin{Claim}\label{cl4}
  $$\alpha(K_3^{\otimes n}) = \alpha(K_3^{\Box n}) = 3^{n-1}.$$
\end{Claim}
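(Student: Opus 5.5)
We prove the lower bound $3^{n-1}$ for both graphs by exhibiting one independent set, and the matching upper bound by covering the vertex set with small cliques.

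\emph{Lower bound.} The canonical set $A_0^n=\{x\in\Z_3^n:\sum_i x_i\equiv 0\pmod 3\}$ from the proof of the Statement has size $3^{n-1}$ and is independent in $K_3^{\Box n}$. Two of its elements that differ in exactly one coordinate would have different coordinate sums modulo $3$.

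To transfer this to the tensor product, use Claim~\ref{cl3}. The matrix $M$ there embeds $K_3^{\Box n}$ into $K_3^{\otimes n}$ bijectively on vertices, so every independent set of $K_3^{\otimes n}$ pulls back under $M$ to an independent set of $K_3^{\Box n}$ of the same size. Hence
$$\alpha(K_3^{\otimes n})\le \alpha(K_3^{\Box n}).$$
It therefore suffices to show that $K_3^{\otimes n}$ also has an independent set of size $3^{n-1}$. Take $\{x: x_1=0\}$: any two of its vertices agree in the first coordinate, so they are not adjacent in $K_3^{\otimes n}$.

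\emph{Upper bound.} It remains to show $\alpha(K_3^{\Box n})\le 3^{n-1}$. Partition $\{0,1,2\}^n$ into the $3^{n-1}$ lines $\{(y,0),(y,1),(y,2)\}$, one for each $y\in\{0,1,2\}^{n-1}$. Each line is a triangle in $K_3^{\Box n}$, so an independent set meets each line in at most one vertex. This gives the bound $3^{n-1}$.

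Alternatively, the same bound follows from Claim~\ref{cl1} applied to the $2n$-regular graph $K_3^{\Box n}$ with $|V|=3^n$, but that claim applies only for $m\ge 3^n/2$. It is therefore insufficient here, and the clique partition is the cleaner route.

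Combining, $3^{n-1}\le\alpha(K_3^{\otimes n})\le\alpha(K_3^{\Box n})\le 3^{n-1}$, which gives Claim~\ref{cl4}. No step is a real obstacle; the only point needing care is the direction of the inequality coming from Claim~\ref{cl3}. A subgraph has at least the independence number of the supergraph on the same vertex set, and the embedding is a vertex bijection.
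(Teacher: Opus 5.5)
Your proof is correct and matches the paper's: the independent set $\{x_1=0\}$ together with Claim~\ref{cl3} gives the lower bound for both graphs, and your partition into lines along the last coordinate is exactly the paper's pigeonhole on $(n-1)$-prefixes. The lower bound via $A_0^n$ for the Hamming product is redundant given your chain of inequalities, and you should drop the aside about Claim~\ref{cl1}, since that claim does not give this bound at all.
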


\begin{proof}
  By Claim~\ref{cl3}, $\alpha(K_3^{\otimes n}) \leq \alpha(K_3^{\Box n})$. The tensor product has an independent set $\{0\}\times \{0,1,2\}^{n-1}$ of size $3^{n-1}$. Suppose that $F \subset V(K_3^{\Box n})$ is such that $|F| > 3^{n-1}$. Then by the pigeonhole principle $F$ contains two sequences with the same $(n-1)$-prefix, and these two vertices are adjacent.
\end{proof}

We now prove Theorem~\ref{thm-av}, namely, that 

\begin{equation*}
  \frac{a_{K_3^{\otimes n}}(m)}{2^n} = \frac{a_{K_3^{\Box n}}(m)}{2n} = \begin{cases}
  0, & m \leq 3^{n-1};\\
  1-\frac{3^{n-1}}{m}, & 3^{n-1} \leq m \leq 2\cdot 3^{n-1};\\
  2-\frac{3^{n}}{m}, & m \geq 2\cdot 3^{n-1}.
  \end{cases}
\end{equation*}

\begin{proof}
By Claims~\ref{cl2} and~\ref{cl3},
$$\frac{a_{K_3^{\otimes n}}(m)}{2^n} \geq \frac{a_{K_3^{\Box n}}(m)}{2n}.$$
Thus, in each range it suffices to give an upper-bound construction for the tensor product and a matching lower bound for the Hamming product.

\begin{enumerate}
\item If $m\leq 3^{n-1}$, the result follows from Claim~\ref{cl4}.

\item Suppose that $3^{n-1}\leq m\leq 2\cdot3^{n-1}$.
  \begin{itemize}
  \item For the lower bound, let $F\subseteq\Z_3^n$ and write $E:=E(K_3^{\Box n}[F])$. Let $e_1,\ldots,e_n$ be the standard basis of $\Z_3^n$, and set
  $$\mathcal E:=\{e_1,\ldots,e_n,2e_1,\ldots,2e_n\}.$$
  Then
  $$\sum_{e\in\mathcal E}|F\cap(F+e)|
  =\sum_{x\in F}\deg_{K_3^{\Box n}[F]}(x)=2|E|.$$
  Hence, for some $e\in\mathcal E$,
  $$|F\cap(F+e)|\leq\frac{|E|}{n}.$$
  Translation invariance gives
  $$|(F+e)\cap(F+2e)|=|(F+2e)\cap F|=|F\cap(F+e)|\leq\frac{|E|}{n}.$$
  Therefore,
  \begin{multline*}
  3^n\geq|F\cup(F+e)\cup(F+2e)|\\
  \geq 3|F|-|(F+e)\cap(F+2e)|-|(F+2e)\cap F|-|F\cap(F+e)|
  \geq 3|F|-\frac{3|E|}{n}.
  \end{multline*}
  Thus
  $$|E|\geq n(|F|-3^{n-1})$$
  and consequently
  $$a_{K_3^{\Box n}}(m)\geq 2n\left(1-\frac{3^{n-1}}{m}\right).$$

  \item For the upper bound, let
  $$F=\{0\}\times\Z_3^{n-1}\;\cup\;\{1\}\times X\subseteq V(K_3^{\otimes n}),$$
  where $X\subseteq\Z_3^{n-1}$ has size $m-3^{n-1}$. Then
  $$|E(K_3^{\otimes n}[F])|=2^{n-1}|X|,$$
  so
  $$\frac{2|E(K_3^{\otimes n}[F])|}{m}=2^n\left(1-\frac{3^{n-1}}{m}\right).$$
  \end{itemize}

\item Suppose that $m\geq2\cdot3^{n-1}$.
  \begin{itemize}
  \item The lower bound
  $$\frac{a_{K_3^{\Box n}}(m)}{2n}\geq2-\frac{3^n}{m}$$
  follows from Claim~\ref{cl1}.
  \item For the upper bound, let
  $$F=\{0,1\}\times\Z_3^{n-1}\;\cup\;\{2\}\times X\subseteq V(K_3^{\otimes n}),$$
  where $X\subseteq\Z_3^{n-1}$ has size $m-2\cdot3^{n-1}$. Then
  $$|E(K_3^{\otimes n}[F])|=2^n|X|+2^{n-1}3^{n-1}=2^{n-1}(2m-3^n),$$
  and hence
  $$\frac{2|E(K_3^{\otimes n}[F])|}{m}=2^n\left(2-\frac{3^n}{m}\right).$$
  \end{itemize}
\end{enumerate}
\end{proof}

The normalized minimum average degree depends only on the density $|F|/3^n$. Moreover, a construction in one dimension can be lifted to higher dimensions without increasing the corresponding normalized maximum degree, as follows.

\begin{Claim}
  $$\frac{\Delta_{K_{3}^{\otimes(n+1)}}(3m)}{2^{n+1}} \leq \frac{\Delta_{K_{3}^{\otimes n}}(m)}{2^{n}}$$
  and 
  $$\frac{\Delta_{K_{3}^{\Box tn}}(3^{n(t-1)}m)}{2nt} \leq \frac{\Delta_{K_{3}^{\Box n}}(m)}{2n}$$
  for any integer $t \geq 1$.
\end{Claim}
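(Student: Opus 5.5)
Both inequalities come from explicit lifting constructions: take an optimal set in dimension $n$ and blow it up.

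\emph{Tensor product.} Let $F\subseteq\Z_3^n$ with $|F|=m$ attain $\Delta(K_3^{\otimes n}[F])=\Delta_{K_3^{\otimes n}}(m)$, and set $F'=F\times\{0,1,2\}\subseteq\Z_3^{n+1}$, so that $|F'|=3m$. Two vertices $(x,a),(y,b)$ are adjacent in $K_3^{\otimes(n+1)}$ exactly when $x\sim y$ in $K_3^{\otimes n}$ and $a\neq b$. Hence the neighbours of $(x,a)$ in $F'$ are the pairs $(y,b)$ with $y\in F$ a neighbour of $x$ and $b\neq a$. This gives $\deg_{F'}(x,a)=2\deg_F(x)\le 2\Delta_{K_3^{\otimes n}}(m)$, and dividing by $2^{n+1}$ yields the first inequality.

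\emph{Hamming product.} We now split the $tn$ coordinates into $t$ blocks of $n$ coordinates each. The naive product $F^t$ does not work: a vertex's degree there is the sum of its degrees over the blocks, which can reach $t\Delta$, and its size is $m^t$ rather than $3^{n(t-1)}m$. Instead, define
$$F'=\Big\{(x^{(1)},\dots,x^{(t)})\in(\Z_3^n)^t:\ \textstyle\sum_{j=1}^t x^{(j)}\in F\Big\},$$
with the sum taken in $\Z_3^n$. Choosing $x^{(1)},\dots,x^{(t-1)}$ freely determines $x^{(t)}$ up to $|F|$ choices, so $|F'|=3^{n(t-1)}m$.

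A Hamming edge changes one coordinate $i$ of one block $j$ by $\pm1$. It therefore changes the sum $s=\sum_j x^{(j)}$ by $\pm e_i$, that is, it moves $s$ to a Hamming neighbour in $K_3^{\Box n}$. Conversely, for a fixed vertex, each of the $2tn$ moves $(j,i,\pm)$ corresponds to the move $(i,\pm)$ on $s$, and each move $(i,\pm)$ on $s$ arises from exactly $t$ choices of $j$. So if $s\in F$, the vertex has exactly $t\cdot\deg_{K_3^{\Box n}[F]}(s)\le t\,\Delta_{K_3^{\Box n}}(m)$ neighbours in $F'$. Normalising by $2nt$ gives $\Delta(K_3^{\Box tn}[F'])/(2nt)\le \Delta_{K_3^{\Box n}}(m)/(2n)$.

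The only step needing care is choosing the summation lift in the Hamming case instead of the product, and checking that the $t$-to-one correspondence between moves is exact. Both verifications are routine.
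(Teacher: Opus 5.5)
Your proof is correct and is the paper's proof: $F\times\{0,1,2\}$ for the tensor product, and for the Hamming product the preimage of $F$ under a block-sum map. Your $t$ blocks of length $n$ summed as vectors agree with the paper's $n$ blocks of $t$ consecutive coordinates up to a permutation of coordinates, and your degree counts $2\deg_F(x)$ and $t\deg_F(s)$ supply the verification the paper leaves implicit (one small aside: a degree of $t\Delta$ already normalizes to $\Delta/(2n)$, so the only obstruction to the naive product $F^t$ is its size $m^t$).
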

\begin{proof}
  For the first inequality, given $F\subseteq V(K_3^{\otimes n})$, take $F\times\{0,1,2\}$.

  For the second inequality, given $F\subseteq V(K_3^{\Box n})$, take
  $$\left\{x\in\{0,1,2\}^{nt}:\left(\sum_{i=1}^{t}x_i,\sum_{i=t+1}^{2t}x_i,\ldots,\sum_{i=(n-1)t+1}^{nt}x_i\right)\in F\right\},$$
  where all sums are taken modulo~$3$.
\end{proof}

\section{Minimum possible maximum degree for the tensor product}\label{sec:tensor}

\subsection{Proof of Theorem~\ref{thm-jump}}\label{sec:proof-jump}

\subsubsection{Fourier decomposition for $\Z_3^n$ and the spectrum of the tensor product}\label{ssec:fourier}

To prove Theorem~\ref{thm-jump}, we use the Fourier basis on $\Z_3^n$.

\begin{Notation}
  For $x,y\in\Z_3^n$, define $u_y(x):=\omega_3^{x\cdot y}$, where $\omega_3$ is a primitive third root of unity. We equip $L_2(\Z_3^n)$ with the normalized inner product
  $$\langle f,g\rangle:=3^{-n}\sum_{x\in\Z_3^n}f(x)\overline{g(x)}.$$
  The functions $\{u_y\}_{y\in\Z_3^n}$ form an orthonormal basis, called the Fourier basis. The unique expansion of a function in this basis is its Fourier decomposition.
\end{Notation}

\begin{Notation}
  For $y \in \Z_3^n$ denote $|y| := \#\{i\in [n] : y_i \neq 0\}$.
\end{Notation}

We identify $\Co^{V(K_3^{\otimes n})}$ with $L_2(\Z_3^n)$ in a natural way: $f_v = f(v)$. In particular, we can treat the adjacency matrix $B_n := \begin{pmatrix}
  0&1&1\\
  1&0&1\\
  1&1&0
\end{pmatrix}^{\otimes n}$ of $K_3^{\otimes n}$ as acting on $L_2(\Z_3^n)$.

\begin{Claim}
  The Fourier basis $\{u_y\}_{y \in \Z_3^n}$ is an eigenbasis of $B_n$, where $u_y$ corresponds to the eigenvalue $\lambda_y = (-1)^{|y|}\cdot 2^{n - |y|}$.
\end{Claim}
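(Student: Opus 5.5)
The plan is to exploit the product structure. Both $B_n$ and the functions $u_y$ factor coordinatewise. Indeed,
$$u_y(x)=\prod_{i=1}^n \omega_3^{x_iy_i},$$
so $u_y=u_{y_1}\otimes\cdots\otimes u_{y_n}$ is a tensor product of one-dimensional characters $u_{y_i}\in L_2(\Z_3)$. Likewise, $B_n=B_1^{\otimes n}$ with $B_1$ the adjacency matrix of $K_3$. Since $(A_1\otimes\cdots\otimes A_n)(v_1\otimes\cdots\otimes v_n)=A_1v_1\otimes\cdots\otimes A_nv_n$, it suffices to treat the case $n=1$. There I will show that $u_0$ is an eigenvector of $B_1$ with eigenvalue $2$, and that $u_1,u_2$ are eigenvectors with eigenvalue $-1$. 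Multiplying these gives
$$\lambda_y=\prod_i \lambda_{y_i}=2^{\#\{i:y_i=0\}}(-1)^{\#\{i:y_i\neq0\}}=(-1)^{|y|}2^{n-|y|},$$
as claimed.

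The one-dimensional computation is direct. Note that $B_1=J-I$, where $J$ is the all-ones $3\times3$ matrix. For any function $f$ on $\Z_3$ we have $(B_1f)(x)=\sum_{z\neq x}f(z)=\big(\sum_z f(z)\big)-f(x)$.
\begin{itemize}
\item For $f=u_0\equiv 1$ this gives $3-1=2$.
\item For $f=u_c$ with $c\in\{1,2\}$, the sum $\sum_z\omega_3^{cz}=1+\omega_3+\omega_3^2=0$ because $\omega_3^c$ is a nontrivial cube root of unity. Hence $B_1u_c=-u_c$.
\end{itemize}

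Finally, orthonormality of $\{u_y\}$ was already stated in the Notation. These are $3^n$ orthonormal eigenvectors in a $3^n$-dimensional space, so they form an eigenbasis.

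Nothing here is a real obstacle. The only point needing care is justifying that the identification of $\Co^{V}$ with $L_2(\Z_3^n)$ is compatible with the Kronecker-product ordering of $B_n$. To avoid it, I can verify the eigen-equation directly rather than citing tensor identities. Write
$$(B_nu_y)(x)=\sum_{z:\,z_i\neq x_i\ \forall i}\ \prod_i\omega_3^{z_iy_i}=\prod_i\Big(\sum_{z_i\neq x_i}\omega_3^{z_iy_i}\Big).$$
Then apply the one-dimensional computation to each factor.
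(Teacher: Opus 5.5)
Your proof is correct and is essentially the paper's argument: the paper also factors $(B_nu_y)(x)$ coordinatewise, writing the neighbours as $x+x''$ with $x''\in\{1,2\}^n$ and evaluating each factor as $\omega_3^{y_i}+\omega_3^{-y_i}\in\{2,-1\}$. You evaluate the same factor instead via $B_1=J-I$ and the vanishing of $\sum_z\omega_3^{cz}$ for $c\neq0$, and your remark that orthonormality yields $3^n$ independent eigenvectors (hence an eigenbasis) makes explicit a detail the paper leaves implicit.
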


\begin{proof}
\begin{multline*}
  (B_nu_y)(x) = \sum_{x' : \{x,x'\} \in E(K_3^{\otimes n})} u_y(x') = \sum_{x'' \in \{1,2\}^n} u_y(x + x'') = \omega_3^{x\cdot y}\prod_{i=1}^n\sum_{x''_i \in \{1,2\}}\omega_3^{x''_i y_i} =\\= \omega_3^{x\cdot y}\prod_{i=1}^n(\omega_3^{y_i}+\omega_3^{-y_i}) = (-1)^{|y|}\cdot 2^{n - |y|}\omega_3^{x\cdot y} = (-1)^{|y|}\cdot 2^{n - |y|}u_y(x).
\end{multline*}
\end{proof}

We shall use the following lemma from~\cite{ADFS}.

\begin{Lemma}[Lemma 2.4 in \cite{ADFS}]\label{lemm}
  For every $k\geq2$ there exists a constant $K=K(k)$ with the following property. Let $f:\Z_k^n\to\{0,1\}$ satisfy $\Pr[f=1]=\alpha$ and
  $$\sum_{|y|>1}|\hat f(y)|^2=\epsilon,$$
  where $0<\epsilon<\alpha-\alpha^2$. Then there exists a Boolean function $g:\Z_k^n\to\{0,1\}$ depending on at most one coordinate such that
  $$\|f-g\|_2^2<\frac{K\epsilon}{\alpha-\alpha^2-\epsilon}.$$
  \end{Lemma}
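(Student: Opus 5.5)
The plan is to follow the standard route to Friedgut--Kalai--Naor type theorems, adapted to $\Z_k^n$. First decompose $f$ by levels in the Efron--Stein sense: write $f=\alpha+\sum_{i=1}^n h_i+R$, where $h_i$ collects the Fourier coefficients $\hat f(y)$ with $\mathrm{supp}(y)=\{i\}$, and $R=\sum_{|y|>1}\hat f(y)u_y$. Each $h_i$ depends only on coordinate $i$ and has mean zero. The $h_i$ are mutually orthogonal (indeed independent as random variables), and $\|R\|_2^2=\epsilon$. Set $L:=\alpha+\sum_i h_i$ and $\sigma^2:=\sum_i\|h_i\|_2^2$. Parseval together with $f^2=f$ gives $\alpha=\|f\|_2^2=\alpha^2+\sigma^2+\epsilon$, so $\sigma^2=\alpha-\alpha^2-\epsilon>0$. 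Since $f$ is $\{0,1\}$-valued and $\|f-L\|_2^2=\epsilon$, the function $L$ is $\epsilon$-close in $L_2$ to a Boolean function, so $\mathbb{E}[\operatorname{dist}(L,\{0,1\})^2]\le\epsilon$.

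The heart of the proof is to show that one $h_{i_0}$ carries almost all of $\sigma^2$. I aim for the inequality
$$\sigma^4-\sum_i\|h_i\|_2^4 \;=\; 2\sum_{i<j}\|h_i\|_2^2\|h_j\|_2^2\;\le\; C(k)\,\epsilon .$$
To obtain it, consider $Q:=L(L-1)$ and its projection onto the level-$2$ subspace. That projection is exactly $2\sum_{i<j}h_ih_j$: the squares $h_i^2$ live at levels $\le1$, and the linear terms live at level $\le 1$. Its squared norm is $4\sum_{i<j}\|h_i\|^2\|h_j\|^2$, by independence and the mean-zero property. It therefore suffices to show $\|Q\|_2^2=O_k(\epsilon)$, using that $L$ is near-Boolean.

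Once this inequality is established, let $i_0$ maximize $\|h_i\|_2^2$. Then $\sigma^2\cdot\max_i\|h_i\|^2\ge\sum_i\|h_i\|^4\ge\sigma^4-C\epsilon$, so $\sum_{j\ne i_0}\|h_j\|^2\le C\epsilon/\sigma^2$. Consequently $\|f-(\alpha+h_{i_0})\|_2^2=\epsilon+\sum_{j\neq i_0}\|h_j\|^2\le \epsilon+C\epsilon/\sigma^2$. Finally let $g$ be the pointwise rounding of $\alpha+h_{i_0}$ to the nearer of $0,1$. Then $g$ depends only on coordinate $i_0$, and since $f$ is Boolean, $|f-g|\le 2|f-(\alpha+h_{i_0})|$ pointwise. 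This gives $\|f-g\|_2^2\le 4(\epsilon+C\epsilon/\sigma^2)\le K\epsilon/(\alpha-\alpha^2-\epsilon)$, using $\sigma^2\le 1/4$.

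The main obstacle is the bound $\|Q\|_2^2=O_k(\epsilon)$ (or directly on its level-$2$ part). The naive estimate $Q=-R(2f-1)+R^2$ only gives $\|Q\|_2=O(\sqrt\epsilon)$, which is too weak. My first attempt would use hypercontractivity on $\Z_k^n$ with the uniform measure (Bonami--Beckner, with a constant depending on $k$). Since $L$ has degree $\le1$, $\|L\|_4$ and $\|L-1\|_4$ are $O_k(1)$. Pointwise, $|Q|\le \operatorname{dist}(L,\{0,1\})\cdot(|L|+|L-1|)$, so Cauchy--Schwarz reduces the problem to fourth-moment control of $\operatorname{dist}(L,\{0,1\})$. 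This again is not immediate, since $R$ has no degree bound. If that fails, I would switch to the elementary probabilistic approach of Jendrej--Oleszkiewicz--Wojtaszczyk. Here $L-\alpha$ is a sum of independent mean-zero variables $h_i(x_i)$, each taking at most $k$ values, and $L$ is $\epsilon$-close to $\{0,1\}$. Split the indices into two groups with sums $A,B$. Conditioning on $B$ shows that $A$ is near two-valued on a large set, and similarly for $B$. Independence then forces $\mathrm{Var}(A)\mathrm{Var}(B)=O_k(\epsilon)$, because otherwise $A+B$ would take at least three well-separated values with substantial probability. Applying this to a suitable balanced split yields the displayed inequality. The dependence on $k$ enters through the bounded alphabet, i.e.\ through the minimum atom probability $1/k$.
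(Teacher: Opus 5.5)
The paper gives no proof of this lemma; it quotes it from \cite{ADFS}. So I assess your argument on its own terms. The outer structure is sound. The identity $\sigma^2=\alpha-\alpha^2-\epsilon$ is right. So is the identification of the level-$2$ part of $Q=L(L-1)$ as $2\sum_{i<j}h_ih_j$, with squared norm $4\sum_{i<j}\|h_i\|_2^2\|h_j\|_2^2$. The deduction $\sum_{j\neq i_0}\|h_j\|_2^2\le C\epsilon/\sigma^2$ and the rounding step are also correct, and together they give exactly the stated denominator $\alpha-\alpha^2-\epsilon$.

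The gap is that the one inequality carrying all the content, $\sum_{i<j}\|h_i\|_2^2\|h_j\|_2^2=O_k(\epsilon)$, is never proved. Your first route applies Cauchy--Schwarz to $\|Q\|_2^2$, so it needs $\mathbb{E}[\operatorname{dist}(L,\{0,1\})^4]=O(\epsilon^2)$. The only handle on that distance is $|R|$, which has unbounded degree, so this does not close, as you note. The fallback is a heuristic rather than an argument. $A$ and $B$ are sums of many coordinates, so the atom bound $1/k$ does not transfer to them. The claim that large $\operatorname{Var}(A)\operatorname{Var}(B)$ forces $A+B$ to take three well-separated values with probability bounded below is exactly the quantitative anticoncentration that has to be proved, and it is not supplied.

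The missing step can be closed with tools you already listed. Do not bound $\|Q\|_2$; instead test $Q$ against its own level-$2$ part, so that only one factor of $R$ has to be measured, and in $L_2$. Since $f^2=f$ and $L=f-R$, one has $Q=R\,(1-f-L)$. Hence
\[
\|P_2Q\|_2^2=\langle Q,P_2Q\rangle\le\|R\|_2\,\|1-f-L\|_4\,\|P_2Q\|_4 .
\]
The three factors are controlled as follows:
\begin{itemize}
\item $\|R\|_2=\sqrt{\epsilon}$.
\item $\|1-f-L\|_4\le 1+\|L\|_4=O_k(1)$, by hypercontractivity on $\Z_k^n$ (uniform measure) for functions of degree $\le 1$, using $\|L\|_2\le1$.
\item $\|P_2Q\|_4\le C_k\|P_2Q\|_2$, by hypercontractivity for degree-$2$ functions.
\end{itemize}
Dividing through gives $4\sum_{i<j}\|h_i\|_2^2\|h_j\|_2^2=\|P_2Q\|_2^2\le C(k)\,\epsilon$, which is your displayed inequality.

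Alternatively, the classical FKN argument also works and bounds $\|Q\|_2^2$ itself by $O_k(\epsilon)$. It combines Paley--Zygmund for the degree-$2$ function $Q$ with Markov's inequality for $|L-f|$, using that $|Q|\le\tau(1+\tau)$ wherever $|L-f|\le\tau$.

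With either step inserted, the rest of your proof goes through as written.
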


The spectrum of the tensor product consists of the eigenvalues $\lambda_k=(-1)^k2^{n-k}$, each with multiplicity $\binom{n}{k}2^k$, for $k=0,\ldots,n$. Hence it is a $(3^n,2^n,2^{n-1})$-graph. We will use the following bipartite consequence of spectral expansion, proved in~\cite{CayleyGK}.

\begin{Theorem}[Theorem 7.2 in \cite{CayleyGK}]
  If $G$ is the tensor product of $K_2$ and an $(n,d,\lambda)$-graph, then $\Delta_G(|V(G)|/2+1)>(d-\lambda)/2$.
\end{Theorem}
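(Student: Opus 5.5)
We need to prove Theorem 7.2 of \cite{CayleyGK}: if $G = K_2 \otimes \Gamma$ with $\Gamma$ an $(n,d,\lambda)$-graph, then every induced subgraph of $G$ on $n+1$ vertices has maximum degree greater than $(d-\lambda)/2$. The plan is to pass to a pair of subsets of $\Gamma$ and apply the expander mixing lemma to a suitably chosen pair.

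\textbf{Setup.} The vertex set of $G=K_2\otimes\Gamma$ is $\{0,1\}\times V(\Gamma)$, and $(0,u)\sim(1,v)$ if and only if $uv\in E(\Gamma)$. Thus $G$ is the bipartite double cover of $\Gamma$ with $|V(G)|=2n$. Take $U\subseteq V(G)$ with $|U|=n+1$ and write $U=\{0\}\times A\cup\{1\}\times B$. Then $|A|+|B|=n+1$. The vertex $(0,u)$ has degree $|N_\Gamma(u)\cap B|$ in $G[U]$, and symmetrically for the other side. Suppose for contradiction that every such degree is at most $D\le (d-\lambda)/2$.

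\textbf{Key step.} Set $B'=V(\Gamma)\setminus B$, so that $|B'|=n-|B|=|A|-1<|A|$. Every $u\in A$ has at least $d-D$ neighbors in $B'$, hence
\[
e(A,B')\ge |A|(d-D).
\]
Here $e(\cdot,\cdot)$ counts ordered pairs, as in the mixing lemma. On the other hand, $e(A,B')\le |B'|\,d$, since each vertex of $B'$ has degree $d$. This alone gives only $|A|(d-D)\le (|A|-1)d$, which is too weak, so we bring in the expander mixing lemma in its sharper form:
\[
e(S,T)\le \frac{d|S||T|}{n}+\lambda\sqrt{|S||T|\Bigl(1-\frac{|S|}{n}\Bigr)\Bigl(1-\frac{|T|}{n}\Bigr)}.
\]
Write $a=|A|/n$ and $b=|B'|/n\approx a$, noting $b<a$. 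This yields
\[
a(d-D)\le dab+\lambda\sqrt{ab(1-a)(1-b)}.
\]
Using $b<a$, we get $\sqrt{ab(1-a)(1-b)}\le a(1-b)$ (this requires $b(1-a)\le a(1-b)$, i.e.\ $b\le a$, which holds). Substituting,
\[
a(d-D)< dab+\lambda a(1-b)\quad\Longrightarrow\quad d-D< db+\lambda(1-b).
\]
The slack comes from the strictness $b<a$.

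\textbf{Symmetrizing.} The bound $d-D<db+\lambda(1-b)$ is useful only when $b\le 1/2$. If $|A|>n/2$, we instead run the argument with the roles swapped: $B$ versus $A'=V(\Gamma)\setminus A$, where $|A'|=|B|-1$. Since $|A|+|B|=n+1$, one of $|A|,|B|$ is at most $(n+1)/2$, so for the corresponding side $b\le 1/2$ (more precisely $b\le (n-1)/(2n)<1/2$). Then
\[
d-D<\frac{d+\lambda}{2},
\]
which gives $D>(d-\lambda)/2$, the desired contradiction.

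\textbf{Main obstacle.} The delicate point is using the refined mixing lemma with the $(1-|S|/n)(1-|T|/n)$ factors, which requires $\lambda$ to bound all nontrivial eigenvalues in absolute value, and then choosing the smaller side so that $b\le 1/2$. The crude version $\lambda\sqrt{|S||T|}$ would not give the constant $1/2$. If the refined inequality turns out awkward, the fallback is to derive it directly: expand $\mathbf 1_A$ and $\mathbf 1_{B'}$ in an eigenbasis, subtract the constant components, and apply Cauchy--Schwarz to the remaining parts, whose squared norms are $|A|(1-a)$ and $|B'|(1-b)$.
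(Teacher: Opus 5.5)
Your argument is correct, and it is essentially the expander-mixing argument of \cite{CayleyGK} that the paper itself reproduces in Claim~\ref{cl-half}. Since $e(A,V(\Gamma)\setminus B)=d|A|-e(A,B)$ and the mixing-lemma error term is unchanged when $B$ is replaced by its complement, your upper bound on $e(A,B')$ is just the lower bound $e(A,B)\ge(d-\lambda)|A||B|/n$ in disguise; keeping all $n+1$ vertices (rather than deleting one, as the paper does there) is what yields the strict inequality through $|B|/n\ge (n+1)/(2n)>1/2$.
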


We shall apply this result to some canonical bipartite subgraphs of the tensor product in the next section. 

\subsubsection{The jump for the bipartite subgraph at 0}\label{bipart_jump}
The tripartite tensor product under study is the union of three interacting bipartite graphs. A useful first step toward understanding the minimum possible maximum degree in the whole graph is therefore to analyze these bipartite components. We will later see that every known optimal example at small densities $m/3^n$ is contained in $\{0,1\}\times\{0,1,2\}^{n-1}$.

Consider the induced subgraph $G_n'$ of the tensor product on $\{0,1\} \times \{0,1,2\}^{n-1}$. It is the tensor product of $K_2$ and $K_3^{\otimes(n-1)}$. Since the latter is a $(3^{n-1},2^{n-1},2^{n-2})$-graph, Theorem~7.2 of~\cite{CayleyGK} implies that $\Delta_{G'_n}(3^{n-1} +1) > \frac{2^{n-1}-2^{n-2}}{2} = \frac{2^{n}}{8}$. The constant $\frac{1}{8}$ can be slightly improved:

\begin{Claim}\label{cl:better-bound-bipart}
  $\Delta_{G'_n}(3^{n-1}+1)>0.52\cdot2^{n-2}$.
\end{Claim}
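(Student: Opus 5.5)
Write $F=\{0\}\times S\;\cup\;\{1\}\times T$ with $S,T\subseteq\Z_3^{n-1}$, $N:=3^{n-1}$, $|S|+|T|=N+1$, and set $s=|S|/N$, $t=|T|/N$. The number of edges of $G'_n[F]$ is $e(S,T)=N\langle 1_S,B_{n-1}1_T\rangle$. Since every vertex of $S$ (resp.\ $T$) sends its $F$-edges into $T$ (resp.\ $S$), we have
$$\Delta(G'_n[F])\geq \frac{e(S,T)}{\min(|S|,|T|)}.$$
So the task is to lower-bound $\langle 1_S,B_{n-1}1_T\rangle$ using the eigenbasis $\{u_y\}$ of $B_{n-1}$ from the preceding Claim, and to do so more carefully than the plain expander mixing lemma behind Theorem~7.2 of~\cite{CayleyGK}.

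First I would redo the mixing bound level by level. Expanding in the Fourier basis gives
$$\langle 1_S,B_{n-1}1_T\rangle=2^{n-1}st+\sum_{y\neq0}\lambda_y\hat 1_S(y)\overline{\hat 1_T(y)}.$$
In the sum, the terms with $|y|=1$ have $|\lambda_y|=2^{n-2}$, and all terms with $|y|\geq2$ have $|\lambda_y|\leq 2^{n-3}$. Let $W_1(S)=\sum_{|y|=1}|\hat 1_S(y)|^2$ and $W_{\ge2}(S)$ be the remaining nonconstant weight, so that $W_1+W_{\geq2}=s-s^2$. Applying Cauchy--Schwarz separately on the two levels gives
$$\langle 1_S,B1_T\rangle\geq 2^{n-1}st-2^{n-2}\sqrt{W_1(S)W_1(T)}-2^{n-3}\sqrt{W_{\geq2}(S)W_{\geq2}(T)}.$$
With $t\approx1-s$, the crude bound (all weight on level $1$) yields $e(S,T)\geq N st\,2^{n-2}$ and hence $\Delta\geq\max(s,t)2^{n-2}\geq 2^{n-3}$. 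This already proves the claim when $\max(s,t)\geq0.52+o(1)$. We may therefore assume $s,t\in[0.48,0.52]$ up to $O(1/N)$.

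In this balanced case, equality in the crude bound forces essentially all of the variance of $1_S$ (and of $1_T$) onto level $1$. Fix a small $\eta>0$.
\begin{itemize}
\item If $W_{\geq2}(S)\geq\eta(s-s^2)$ or $W_{\geq2}(T)\geq\eta(t-t^2)$, the two-level estimate gains a term of order $2^{n-3}\eta\,st$ over the crude bound. This gives $\Delta\geq(\tfrac12+c\eta)2^{n-2}$, and I would tune $\eta$ so that the gain is at least $0.02\cdot2^{n-2}$.
\item Otherwise $\epsilon=W_{\geq2}(S)<\eta(s-s^2)$. Lemma~\ref{lemm} (with $k=3$) then gives a Boolean $g$ depending on one coordinate with $\|1_S-g\|_2^2<K\eta/(1-\eta)$. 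But a Boolean function of one $\Z_3$-coordinate has density in $\{0,\tfrac13,\tfrac23,1\}$. Hence $\|1_S-g\|_2^2\geq|s-\Pr[g=1]|\geq 0.48-\tfrac13>0.14$. This contradicts the lemma for $\eta$ small enough, provided $K(3)\eta/(1-\eta)<0.14$.
\end{itemize}

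The main obstacle is making the constants mesh. The improvement from the first branch scales like $\eta$, while the second branch requires $\eta<0.14/K(3)$ roughly, and $K(3)$ is not explicit in~\cite{ADFS}. Getting the specific number $0.52$ may therefore require either an explicit version of Lemma~\ref{lemm} for $k=3$ or a direct argument. I would first try the direct route: a function on $\Z_3^{n-1}$ with all of its variance on level $1$ is an affine combination of single-coordinate functions, and being Boolean forces it to depend on one coordinate. I would then make this stable quantitatively via a Friedgut--Kalai--Naor type estimate with explicit constants. The sign structure also matters and should be tracked: level-$1$ eigenvalues are negative, so the worst case needs $\hat 1_T\approx-\hat 1_S$ on level $1$, i.e.\ $T$ close to a complement of $S$. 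This is consistent with the dictator picture and does not help evade the density obstruction.
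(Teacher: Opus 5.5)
Your structure matches the paper's. A crude mixing bound forces both densities into $[0.48,0.52]$, and then a two-level Cauchy--Schwarz gains once $1_S$ has a definite fraction of its variance on levels $\geq2$. The gap is in how you certify that fraction. Lemma~\ref{lemm} only gives $W_{\geq2}(S)\geq\eta_0(s-s^2)$ with $\eta_0$ of order $0.14/K(3)$. Since $K(3)$ is not explicit, this proves $\Delta_{G'_n}(3^{n-1}+1)\geq(\frac12+c)2^{n-2}$ for some unspecified $c>0$, not the stated $0.52$. At $s=\frac12$ your gain reaches $0.02\cdot2^{n-2}$ only for $\eta$ around $0.1$, which would require an explicit $K(3)$ of order $1$; \cite{ADFS} does not provide this. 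Your fallback, an FKN-type stability theorem with explicit constants, is not supplied and is much heavier than needed.

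The missing ingredient is an explicit cap on the level-$1$ weight. It follows at once from the exact minimum average degree in Theorem~\ref{thm-av}, applied to $S$ inside $K_3^{\otimes(n-1)}$; this is the paper's Claim~\ref{cl:no-conc}. The level-$1$ eigenvalues of $B_{n-1}$ equal $-2^{n-2}$ and all higher ones are at most $2^{n-3}$. So for $s\in[\frac13,\frac23]$,
\[
s\,2^{n-1}\Bigl(1-\frac{1}{3s}\Bigr)\leq\langle 1_S,B_{n-1}1_S\rangle\leq2^{n-1}s^2-2^{n-2}W_1(S)+2^{n-3}W_{\geq2}(S).
\]
With $W_1+W_{\geq2}=s-s^2$ this rearranges to $\frac34W_1(S)\leq\frac13-\frac34s+\frac34s^2$, i.e.\ $W_{\geq2}(S)\geq2(s-\frac13)(\frac23-s)$. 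This is a quantitative form of your own dictator-density obstruction: it vanishes only at $s=\frac13,\frac23$, and at $s=\frac12$ it puts a fraction $\frac29$ of the variance on levels $\geq2$.

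To finish, delete a vertex so that $s+t=1$ with $s\geq t$. The cap gives $\|B_{n-1}(1_S-s)\|^2\leq2^{2(n-2)}W_1(S)+2^{2(n-3)}W_{\geq2}(S)\leq2^{2(n-2)}\bigl(\frac13-\frac12(s-s^2)\bigr)$. Apply Cauchy--Schwarz against $\|1_T-t\|=\sqrt{t-t^2}$ and divide by the smaller side $t$. Then maximum degree at most $d$ forces
\[
\frac{d}{2^{n-2}}\geq2s-\sqrt{\frac{s}{1-s}}\sqrt{\frac13-\frac12(s-s^2)}>0.524\qquad(0.5\leq s\leq0.52),
\]
which gives the claim with no appeal to Lemma~\ref{lemm}.
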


The first part of the proof follows the argument of Theorem~7.2 in~\cite{CayleyGK}, together with the expander mixing lemma, to show that if the maximum degree is close to the stated bound, then the set must meet each side of the bipartition in approximately $3^{n-1}/2$ vertices. Claim~\ref{cl:no-conc} then bounds the first nonconstant Fourier level of the characteristic function of each part. We use this estimate to count the edges of the induced subgraph more precisely.

\begin{Claim}\label{cl:no-conc}
  Let $f\in L_2(\Z_3^n)$ be Boolean. Let $f = \sum_{y\in \Z_3^n} \hat f(y)u_y$ be its Fourier decomposition. Suppose that $\alpha := \hat{f}(0) \in [\frac13, \frac23]$. Denote $\gamma := \sum_{i = 1}^{n}|\hat f(e_i)|^2 + \sum_{i = 1}^{n}|\hat f(-e_i)|^2$. Then
  $$\frac{3}{4}\gamma \leq \frac{1}{3} - \frac{3}{4}\alpha + \frac{3}{4}\alpha^2.$$
\end{Claim}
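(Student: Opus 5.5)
The plan is to use the triangle structure of $K_3^{\otimes n}$ together with its spectrum. Let $F$ be the set with indicator $f$, so $\hat f(0)=\alpha=|F|/3^n$. Every edge $\{x,x+z\}$ with $z\in\{1,2\}^n$ lies in the triangle $\{x,x+z,x+2z\}$. For a fixed $z\in\{1,2\}^n$, the translates $x+\langle z\rangle$ partition $\Z_3^n$ into $3^{n-1}$ such triangles. If a triangle contains $k$ points of $F$, then the number of pairs inside it is $\binom{k}{2}$, and $\binom{k}{2}\ge k-1$ for $k\in\{0,1,2,3\}$. Summing over the $3^{n-1}$ triangles in direction $z$, and using that $\sum_x f(x)f(x+z)$ counts each pair in such a triangle exactly once, I will get
$$3^{-n}\sum_{x}f(x)f(x+z)\;\ge\;\alpha-\frac13\qquad\text{for every } z\in\{1,2\}^n .$$
This is the same counting as in the lower bound of Theorem~\ref{thm-av}, now carried out in every tensor direction.

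Next I will average over $z\in\{1,2\}^n$ and evaluate the left-hand side in the Fourier basis. By Parseval,
$$3^{-n}\sum_x f(x)f(x+z)=\sum_y|\hat f(y)|^2\omega_3^{y\cdot z}.$$
Averaging over $z\in\{1,2\}^n$ replaces $\omega_3^{y\cdot z}$ by $\prod_i\frac{\omega_3^{y_i}+\omega_3^{-y_i}}{2}=(-\tfrac12)^{|y|}$. This is exactly the eigenvalue computation for $B_n$ in the Claim above, divided by $2^n$. The result is
$$\sum_y\Big(-\frac12\Big)^{|y|}|\hat f(y)|^2\;\ge\;\alpha-\frac13 .$$
Split the left side by levels. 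The $y=0$ term is $\alpha^2$. The level-one terms are exactly those with $y\in\{\pm e_i\}$, so they contribute $-\gamma/2$. Every term with $|y|\ge2$ is at most $\frac14|\hat f(y)|^2$.

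Since $f$ is Boolean, Parseval gives $\sum_y|\hat f(y)|^2=\langle f,f\rangle=\alpha$. Hence the weight on levels $\ge 2$ equals $\alpha-\alpha^2-\gamma$. Substituting gives
$$\alpha^2-\frac{\gamma}{2}+\frac14\big(\alpha-\alpha^2-\gamma\big)\;\ge\;\alpha-\frac13 .$$
Rearranging yields $\frac34\gamma\le\frac13-\alpha+\alpha^2+\frac{\alpha}{4}-\frac{\alpha^2}{4}=\frac13-\frac34\alpha+\frac34\alpha^2$, which is the claim.

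The one step that needs care is the combinatorial inequality. I must check that for fixed $z$ the sum $\sum_x f(x)f(x+z)$ counts each unordered pair in each triangle $\{x,x+z,x+2z\}$ exactly once. Each of the three ordered differences $z$ connects a distinct pair of the triangle, so this holds. I also need $\binom{k}{2}\ge k-1$ for $k\le 3$, which can be checked directly. The Fourier identity and the crude bound $(-\tfrac12)^{|y|}\le\frac14$ for $|y|\ge2$ are routine. The hypothesis $\alpha\in[\frac13,\frac23]$ is not needed for the inequality itself; it only matters for how the claim is used later.
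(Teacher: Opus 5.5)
Your proof is correct and is essentially the paper's argument: both sandwich $3^{-n}f^TB_nf$ between $2^n(\alpha-\frac13)$ and the spectral upper bound $2^n\alpha^2-2^{n-1}\gamma+2^{n-2}(\alpha-\alpha^2-\gamma)$; your average over $z\in\{1,2\}^n$ is exactly this quantity divided by $2^n$. The only difference is that the paper gets the lower bound by citing the average-degree formula of Theorem~\ref{thm-av} (proved via Claims~\ref{cl2} and~\ref{cl3}), whereas you derive it directly from the triangle decomposition in each direction, which is self-contained and, as you note, shows that the hypothesis $\alpha\in[\frac13,\frac23]$ is not needed.
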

\begin{proof}
  Let $F = \{x : f(x) = 1\}$. Note that by definition of the minimum possible average degree $\alpha a_{K_3^{\otimes n}}(|F|) \leq \frac{f^TB_{n}f}{3^{n}} \leq 2^{n}\alpha^2 - 2^{n-1}\gamma + 2^{n-2}(\alpha - \alpha^2 - \gamma)$. For $\frac{1}{3} \leq \alpha \leq \frac{2}{3}$ we have $a_{K_3^{\otimes n}}(|F|) = 2^{n}(1 - \frac{1}{3\alpha})$. Substituting this into the previous expression and rearranging we get $\frac{3}{4}\gamma \leq \frac{1}{3} - \frac{3}{4}\alpha + \frac{3}{4}\alpha^2$, as desired.
\end{proof}

\begin{proof}[Proof of Claim~\ref{cl:better-bound-bipart}]
  Assume the contrary: suppose there exists $F \subset \{0,1\} \times \{0,1,2\}^{n-1}$ of size $3^{n-1} +1$ such that the induced subgraph has maximum degree at most $d = 0.52 \cdot 2^{n-2}$. Delete one element from $F$ so that $F$ still intersects both parts of $G_n'$. Let $f_0 \in L_2(\{0\} \times \Z_3^{n-1})$ be the characteristic function of $F_0 := F\cap (\{0\} \times \Z_3^{n-1})$ and $f_1 \in L_2(\{1\} \times \Z_3^{n-1})$ be the characteristic function of $F_1 = F\cap (\{1\} \times \Z_3^{n-1})$. Then $e(F_0, F_1) = f_0^TB_{n-1}f_1$. Let $f_0 = \sum_{y\in \Z_3^{n-1}}\hat f_0(y)u_y$ and $f_1 = \sum_{y\in \Z_3^{n-1}}\hat f_1(y)u_y$ be the Fourier decompositions of these functions. Let $f_0' = \sum_{y\neq 0}\hat f_0(y)u_y$ and $f_1' = \sum_{y\neq 0}\hat f_1(y)u_y$. Let $\alpha_0 := \hat f_0(0) = \frac{|F_0|}{3^{n-1}}$ and $\alpha_1 := \hat f_1(0) = \frac{|F_1|}{3^{n-1}}$. Notice that $\alpha_0 + \alpha_1 = 1$. Assume without loss of generality that $\alpha_0 \geq \alpha_1$. We will now prove that both $\alpha_0$ and $\alpha_1$ should be close to $\frac{1}{2}$, namely,

  \begin{Claim}\label{cl-half}
  $0.5 \leq \alpha_0 \leq 0.52$.
  \end{Claim}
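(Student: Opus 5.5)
The lower bound $\alpha_0\geq 0.5$ is immediate from the normalization $\alpha_0\geq\alpha_1$ together with $\alpha_0+\alpha_1=1$. The plan for the upper bound is to count $e(F_0,F_1)$ in two ways. The maximum degree assumption will give an upper bound on this count, and the expander mixing lemma, written in the Fourier basis of $\Z_3^{n-1}$, will give a lower bound. Comparing the two forces $\alpha_0\leq 0.52$.

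For the upper bound on the edge count, every edge of $G'_n[F]$ joins $F_0$ to $F_1$, since each side of the bipartition is independent. Each vertex of $F_1$ has at most $d$ neighbours in $F$. Deleting a vertex can only decrease degrees, so
$$e(F_0,F_1)\leq d|F_1| = 0.52\cdot 2^{n-2}\cdot\alpha_1 3^{n-1}.$$

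For the lower bound, expand
$$e(F_0,F_1)=f_0^TB_{n-1}f_1=3^{n-1}\sum_{y}\lambda_y\hat f_0(y)\overline{\hat f_1(y)}.$$
The conjugation convention is harmless here: the $f_i$ are real and the $\lambda_y$ are real, so the sum is real. Here $\lambda_y=(-1)^{|y|}2^{n-1-|y|}$ is the eigenvalue of $B_{n-1}$. The $y=0$ term contributes $3^{n-1}2^{n-1}\alpha_0\alpha_1$. Every other term has $|\lambda_y|\leq 2^{n-2}$. By Cauchy--Schwarz and Parseval,
$$\sum_{y\neq 0}|\hat f_i(y)|^2=\alpha_i-\alpha_i^2=\alpha_0\alpha_1 \quad (i=0,1),$$
where the last equality uses $\alpha_0+\alpha_1=1$. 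Hence
$$e(F_0,F_1)\geq 3^{n-1}\bigl(2^{n-1}\alpha_0\alpha_1-2^{n-2}\sqrt{\alpha_0\alpha_1\cdot\alpha_0\alpha_1}\bigr)=3^{n-1}2^{n-2}\alpha_0\alpha_1 .$$

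Combining the two bounds gives $2^{n-2}\alpha_0\alpha_1\leq 0.52\cdot 2^{n-2}\alpha_1$. We chose the deleted vertex so that $F$ still meets both parts, so $\alpha_1>0$. Dividing by $\alpha_1$ yields $\alpha_0\leq 0.52$.

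The only delicate points are bookkeeping ones. The first is the normalization of the inner product, so that Parseval reads $\sum_y|\hat f|^2=\alpha$ and the bilinear form carries the factor $3^{n-1}$. The second is the bound $|\lambda_y|\leq 2^{n-2}$ for $y\neq 0$: this holds because the graph is $K_3^{\otimes(n-1)}$, whose nontrivial eigenvalues are $\pm 2^{n-1-|y|}$ with $|y|\geq 1$. I expect no real obstacle beyond this, since the identity $\alpha_0-\alpha_0^2=\alpha_1-\alpha_1^2$ makes the error term collapse exactly.
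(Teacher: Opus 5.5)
Your proposal is correct and matches the paper's proof essentially line for line. The lower bound comes from $\alpha_0\geq\alpha_1$ and $\alpha_0+\alpha_1=1$. For the upper bound, you bound $d|F_1|$ below by $e(F_0,F_1)=3^{n-1}\bigl(2^{n-1}\alpha_0\alpha_1+\langle f_1',B_{n-1}f_0'\rangle\bigr)$, control the error term by Cauchy--Schwarz with the second eigenvalue $2^{n-2}$ (it collapses to $2^{n-2}\alpha_0\alpha_1$ because $\alpha_0-\alpha_0^2=\alpha_1-\alpha_1^2$), and then divide by $\alpha_1>0$.
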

  \begin{proof}
  The first inequality is obvious from $\alpha_0 + \alpha_1 = 1$ and $\alpha_0 \geq \alpha_1$. 

  Towards the second inequality we shall express the number of edges in $F$ via the Fourier decompositions of the characteristic functions of the parts, namely, 
  we have 
  
  $$d\alpha_1 \geq \frac{e(F_1, F_0)}{3^{n-1}} = 2^{n-1}\alpha_0 \alpha_1 + \langle f_1',B_{n-1}f_0'\rangle \geq 2^{n-1}\alpha_0 \alpha_1 - 2^{n-2}\sqrt{\alpha_0- \alpha_0^2} \sqrt{\alpha_1- \alpha_1^2} = 2^{n-2}\alpha_0\alpha_1,$$
  where the last inequality follows from Cauchy--Schwarz, and the last equality from $\alpha_0 + \alpha_1 = 1$. 
  Since $\alpha_1 > 0$, this implies $d \geq 2^{n-2}\alpha_0$, and so $\alpha_0 \leq \frac{d}{2^{n-2}} = 0.52$, as desired.
  \end{proof}

  Now let $\gamma_0 := \sum_{i = 1}^{n-1}|\hat f_0(e_i)|^2 + \sum_{i = 1}^{n-1}|\hat f_0(-e_i)|^2$. Then by Claim~\ref{cl:no-conc} we get $\frac{3}{4}\gamma_0 \leq \frac{1}{3} - \frac{3}{4}\alpha_0 + \frac{3}{4}\alpha_0^2$.

  Then $||B_{n-1}f_0'||^2 \leq 2^{2(n-2)}\gamma_0 + 2^{2(n-3)}(\alpha_0 - \alpha_0^2 -\gamma_0) = 2^{2(n-2)}(\frac{3}{4}\gamma_0 + \frac{1}{4}(\alpha_0 - \alpha_0^2)) \leq 2^{2(n-2)}(\frac{1}{3} - \frac{1}{2}(\alpha_0 - \alpha_0^2))$.

  Hence, once again estimating the number of edges and using the Cauchy--Schwarz inequality, we get 

  \begin{multline*}
  \alpha_1 d \geq 2^{n-1}\alpha_0 \alpha_1 + \langle f_1',B_{n-1}f_0'\rangle \geq 2^{n-1}\alpha_0 \alpha_1 - ||f_1'|| \cdot ||B_{n-1}f_0'|| \geq\\\geq 2^{n-1}\alpha_0\alpha_1 - 2^{n-2}\sqrt{\alpha_1 - \alpha_1^2}\sqrt{\frac{1}{3} - \frac{1}{2}(\alpha_0 - \alpha_0^2)}.
  \end{multline*}

  Consequently, since the function $x - x^2$ is decreasing for $x \geq 0.5$, and $\frac{x}{1-x}$ is increasing for $x\in (0,1)$, using Claim~\ref{cl-half} we conclude that
  
  $$\frac{d}{2^{n-2}} \geq 2\alpha_0 - \sqrt{\frac{\alpha_0}{1-\alpha_0}}\sqrt{\frac{1}{3} - \frac{1}{2}(\alpha_0 - \alpha_0^2)} \geq 1 - \sqrt{\frac{0.52}{1-0.52}}\sqrt{\frac{1}{3} - \frac{1}{2}(0.52 - 0.52^2)} > 0.524,$$
  contradiction.
  
\end{proof}

\subsubsection{The jump at 0 for the whole graph}\label{gap1}

We now prove part~\ref{thm:jump0} of Theorem~\ref{thm-jump}: there exists an absolute constant $c>0$ such that every $F\subseteq\Z_3^n$ with $|F|>3^{n-1}$ satisfies $\Delta(K_3^{\otimes n}[F])\geq c2^n$. Equivalently, $\Delta_{K_3^{\otimes n}}(3^{n-1}+1)\geq c2^n$.

\begin{proof}
Let $F\subseteq\Z_3^n$ induce a subgraph of maximum degree at most $d$. By deleting vertices if necessary, we may assume that $|F|=3^{n-1}+1$. Set $\alpha:=|F|/3^n$ and $\varepsilon:=d/2^n$. It suffices to consider $\varepsilon<1/1000$.

Let $\un_F = \sum_{y\in \Z_3^n}\hat f(y) u_y$ be the Fourier decomposition of the characteristic function of $F$. Define $P_\ell(\un_F):=\sum_{y:|y|=\ell}\hat f(y)u_y$. Let $P_{\geq2}(\un_F):=\sum_{\ell \geq 2}P_\ell(\un_F)$.

We distinguish two cases according to the size of $||P_{\geq 2}(\un_F)||$. If this projection is large, then the induced subgraph on $F$ has large average degree. If it is small, Lemma~\ref{lemm} implies that $\un_F$ is close to a function of one coordinate, and we derive the desired degree bound from that structure.

\textbf{Case 1: $||P_{\geq 2}(\un_F)||_2^2 > \frac{8}{3}\alpha\varepsilon$.}

We have 
\begin{multline*}
  2|E(K_{3}^{\otimes n}[F])| = \un_F^TB_n \un_F = 3^n\sum_{k=0}^n \lambda_k||P_k(\un_F)||_2^2\geq \\ \geq 2^n\cdot 3^n\cdot ||P_0(\un_F)||^2_2 - 2^{n-1}\cdot 3^n\cdot ||P_1(\un_F)||^2_2 - 2^{n-3}\cdot 3^n\cdot ||P_{\geq 2}(\un_F)||^2_2 =\\= 3^n(2^n\alpha^2 - 2^{n-1}(\alpha - \alpha^2 - ||P_{\geq 2}(\un_F)||^2_2) - 2^{n-3}||P_{\geq 2}(\un_F)||^2_2) =\\= 3^n\cdot 2^{n-1}(3\alpha^2 - \alpha) + 3^n\cdot 2^{n-3}\cdot 3||P_{\geq 2}(\un_F)||^2_2 > 3^n\cdot 2^n \cdot \alpha\varepsilon = d|F|,
\end{multline*}
and so the average degree of $K_{3}^{\otimes n}[F]$ is greater than $d$, contradiction.

\vspace{2mm}

\textbf{Case 2: $||P_{\geq 2}(\un_F)||_2^2 \leq \frac{8}{3}\alpha\varepsilon$.}

We first apply Lemma~\ref{lemm} with $k = 3$.

For the characteristic function of $F$ it produces a Boolean function $g$ depending on at most one coordinate, such that 

\begin{multline*}
  ||\un_F-g||_2^2 < \frac{K}{\alpha - \alpha^2 - ||P_{\geq 2}(\un_F)||^2_2}||P_{\geq 2}(\un_F)||^2_2 \leq\\ \leq \frac{K}{\alpha - \alpha^2 - \frac83\alpha\varepsilon}\cdot\frac83\alpha\varepsilon = \frac{K}{1 - \alpha - \frac{8}{3}\varepsilon}\cdot \frac{8}{3}\varepsilon.
\end{multline*}

For $n$ sufficiently large we have $\alpha \leq \frac{2}{3}$, and so $||\un_F-g||_2^2 < \frac{8\varepsilon K}{1 - 8\varepsilon}$. Without loss of generality, assume that $g$ depends only on the first coordinate. 

Let $F_i := F\cap \{x_1 = i\}$ for $i = 0, 1, 2$. Let $\alpha_i := \frac{|F_i|}{3^{n-1}}$. We can assume without loss of generality that $\alpha_0 \geq \alpha_1 \geq \alpha_2$, then $\alpha_0 \geq \alpha \geq \frac{1}{3}$ and $\alpha_2 \leq \alpha \leq \frac{2}{3}$. Thus if $g(x_1 = 0) = 0$ or $g(x_1 = 2) = 1$ we have $||\un_F-g||_2^2 \geq \frac19$. In the remaining case $g(x_1 = 0) = 1$ and $g(x_1 = 2) = 0$, and so $||\un_F-g||_2^2 \geq \frac{1-\alpha_0}{3} + \frac{\alpha_2}{3}$. Now we will estimate the right hand side using the expander mixing lemma.

Since $|F| > 3^{n-1}$, $\alpha_1 > 0$. Delete one vertex from $F$ getting a set $F'$ such that $\alpha'_0 + \alpha'_1 + \alpha'_2 = 1$ and still $\alpha'_0 \geq \alpha'_1 \geq \alpha'_2$ and $\alpha'_1 > 0$. (Here and later in similar contexts $\alpha_i'$ and $F'_i$ mean the same quantities for $F'$ as $\alpha_i$ and $F_i$ do for $F$.) Consider once again $G'_n$, the bipartite subgraph induced on $\{0,1\} \times \Z_3^{n-1}$. This graph is the tensor product of $K_3^{\otimes (n-1)}$ and an edge. Thus by the expander mixing lemma for $K_3^{\otimes (n-1)}$ 
  $$\bigg|e(F'_0, F'_1) - \frac{2^{n-1}|F'_0||F'_1|}{3^{n-1}}\bigg| \leq 2^{n-2}\sqrt{|F'_0||F'_1|(1-\alpha'_0)(1-\alpha'_1)},$$
  and hence
  $$d\cdot \alpha'_1 \geq \frac{e(F'_0,F'_1)}{3^{n-1}} \geq 2^{n-1}\alpha'_0\alpha'_1 - 2^{n-2}\sqrt{\alpha'_0\alpha'_1(1-\alpha'_0)(1-\alpha'_1)}.$$
  Then 
  $$4\varepsilon \geq \alpha'_0\bigg(2 - \sqrt{\Big(1+\frac{\alpha'_2}{\alpha'_1}\Big)\Big(1+\frac{\alpha'_2}{\alpha'_0}\Big)}\bigg) \geq \alpha'_0\bigg(2 - \sqrt{2\Big(1+\frac{\alpha'_2}{\alpha'_0}\Big)}\bigg).$$
  Since $\varepsilon \leq \frac{1}{6}$ this implies
  $$\alpha'_0 - \alpha'_2 \leq 8\varepsilon - \frac{8\varepsilon^2}{\alpha'_0} \leq 8\varepsilon - 8\varepsilon^2.$$

  Since we deleted only one vertex, for the actual set $F$ we get $\alpha_0 - \alpha_2 \leq 8\varepsilon$ for $n$ sufficiently large.

Consequently, $||\un_F-g||_2^2 \geq \frac{1}{3} - \frac{8}{3}\varepsilon \geq \frac{1}{9}$ as $\varepsilon \leq \frac{1}{12}$.

Overall, $$\frac{1}{9} \leq ||\un_F-g||_2^2< \frac{8\varepsilon K}{1 - 8\varepsilon}.$$

We get $\frac{d}{2^n} = \varepsilon > \frac{1}{8(9K+1)}$.

\end{proof}

\paragraph{A curious fact: absence of an optimal example.}

From the statement above we see that $\epsilon := \inf_n \frac{\min_{|F|>3^{n-1}} \Delta(K_3^{\otimes n}[F])}{2^n} > 0$.

\begin{Claim}
  This infimum cannot be achieved.
\end{Claim}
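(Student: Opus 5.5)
We argue by contradiction, using a product construction. Suppose that for some $n$ there is a set $F\subseteq\Z_3^n$ with $|F|>3^{n-1}$ and $\Delta(K_3^{\otimes n}[F])=\epsilon 2^n$. From $F$ we will build, in dimension $n+1$ (or some larger $n+t$), a set of size greater than $3^{n+t-1}$ whose normalized maximum degree is strictly less than $\epsilon$. This contradicts the definition of $\epsilon$ as an infimum.

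The obvious lift $F\times\{0,1,2\}$ from the lifting claim keeps the normalized degree exactly the same. So we must spend the surplus $|F|-3^{n-1}\ge 1$ more cleverly. The natural candidate is
$$F^*:=F\times\{0\}\;\cup\;A\times\{1\}\;\cup\;B\times\{2\},$$
with sets $A,B$ chosen so that $|F^*|\ge 3^n+1$. The degree of $(x,0)$ in $K_3^{\otimes(n+1)}[F^*]$ is $\deg_A(x)+\deg_B(x)$, where $\deg_A(x)$ counts the $K_3^{\otimes n}$-neighbours of $x$ in $A$. Similar formulas hold for the other layers. We want every such degree to be at most $\epsilon 2^{n+1}-1$, after accounting for integrality.

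The cleanest choice is to take $A$ and $B$ to be independent sets of size $3^{n-1}$, namely $A=\{x_1=1\}$ and $B=\{x_1=2\}$ (after relabelling coordinates of $F$), with $F$ oriented so that $F$ mostly lives where it interacts weakly with $A\cup B$. Then $|F^*|=|F|+2\cdot 3^{n-1}>3^n$. The degree of $(a,1)$ for $a\in A$ is $\deg_F(a)+\deg_B(a)$. Since $A$ and $B$ are the layers $x_1=1$ and $x_1=2$, every vertex of $A$ has exactly $2^{n-1}$ neighbours in $B$, so this degree is $2^{n-1}+\deg_F(a)$. This is too big, so the construction has to be more economical.

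Instead, we aim to exploit only the single surplus vertex. Write $F=F_0\cup\{v\}$ with $|F_0|=3^{n-1}$. We replace one of the three layers by a \emph{maximum independent set} and use the extra vertex only once:
$$F^*=F_0\times\{0,1,2\}\;\cup\;\{(v,0)\}.$$
Then $|F^*|=3^n+1$. Every vertex $(x,j)$ has degree
$$2\deg_{F_0}(x)+[j\ne0]\,\mathbf 1[x\sim v]\le 2\,\Delta(F_0)+1,$$
and $(v,0)$ has degree $2\deg_{F_0}(v)$. Since $\Delta(F_0)\le\epsilon 2^n$, the normalized maximum degree is at most $\epsilon+2^{-(n+1)}$. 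That is not yet strictly less than $\epsilon$, so the step that still needs care is showing that a vertex attaining degree exactly $\epsilon2^n$ in $F$ can be avoided.

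The key step, and the expected main obstacle, is this. Suppose $F$ attains the infimum. Take $F_0\subseteq F$ of size $3^{n-1}$ chosen to remove high-degree vertices: we delete $|F|-3^{n-1}-1$ vertices, keeping a surplus of exactly one, chosen to include all neighbours of a degree-maximizing vertex where possible. In $F^*=F_0\times\{0,1,2\}\cup\{(v,0)\}$ the maximum degree is then twice the degree in $F_0\cup\{v\}$ restricted appropriately. Repeated application yields a sequence whose normalized degrees are strictly decreasing, contradicting the assumption that $\epsilon$ is attained.

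I would first try to make this rigorous by showing $F$ can be trimmed to $3^{n-1}+1$ vertices in which some vertex has degree below the maximum. Concretely, take $(v,0)$ with $v$ of minimal degree. Then use the expander mixing lemma structure from Case 2, which gives $\alpha_0\approx\alpha_2$, to show that not all vertices of $F$ can have degree exactly $\epsilon 2^n$. The contradiction is then $\Delta(F^*)/2^{n+1}<\epsilon$.
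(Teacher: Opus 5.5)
\textbf{There is a genuine gap, at exactly the step you flag as the main obstacle.} The route you suggest for closing it does not address what is actually needed.

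Put $d:=\epsilon 2^n$. By the definition of $\epsilon$, every subset of $F$ with more than $3^{n-1}$ elements still has maximum degree exactly $d$. So no trimming can remove all degree-$d$ vertices, and you may assume $|F|=3^{n-1}+1$ and $F=F_0\cup\{v\}$.

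In $F^*=F_0\times\{0,1,2\}\cup\{(v,0)\}$, the vertex $(x,0)$ with $x\in F_0$ has degree exactly $2\deg_{F_0}(x)$, and $(v,0)$ has degree $2\deg_F(v)$. Hence $\Delta(K_3^{\otimes(n+1)}[F^*])<2d$ holds only if two things hold together:
\begin{itemize}
\item $\deg_F(v)<d$, and
\item $v$ is adjacent to \emph{every} vertex of $F$ of degree $d$.
\end{itemize}
Knowing that some vertex of $F$ has degree below $d$ does not give such a dominating vertex. Neither does the estimate $\alpha_0\approx\alpha_2$ from the expander-mixing argument of Case~2, which only says $F$ is spread over the three layers. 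Nothing in the setup forces such a $v$ to exist. So your construction never drops below normalized degree $\epsilon$, and the ``repeated application'' has no first strict step.

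\textbf{The missing idea is to add many new coordinates at once and use that degrees multiply in a tensor product.} A single new coordinate cannot do this with product sets $F\times Q$: $Q=\Z_3$ keeps the factor $1$, and any proper $Q\subsetneq\Z_3$ loses at least a third of the density.

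Let $Q_m:=\Z_3^m\setminus\{0,1\}^m$. Every $y\in\Z_3^m$ has a neighbour $z\in\{0,1\}^m$: take $z_i=1$ if $y_i=0$ and $z_i=0$ otherwise. Hence $\Delta(K_3^{\otimes m}[Q_m])\le 2^m-1$, and therefore
$\Delta(K_3^{\otimes(n+m)}[F\times Q_m])\le(2^m-1)\epsilon2^n=(1-2^{-m})\epsilon2^{n+m}$.
This is strictly below $\epsilon 2^{n+m}$ because $\epsilon>0$ by Theorem~\ref{thm-jump}.

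The price is only a density factor $1-(2/3)^m$: $|F\times Q_m|=|F|(3^m-2^m)$. Since $|F|\ge3^{n-1}+1$, this exceeds $3^{n+m-1}$ once $m$ is large, which contradicts the definition of $\epsilon$.

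So the surplus of $F$ over $3^{n-1}$ should not be spent on one extra vertex in a new layer. It should absorb a vanishing loss of density in exchange for a fixed multiplicative saving $1-2^{-m}$ in normalized degree.
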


\begin{proof}
  Suppose, to the contrary, that there exist $n$ and $F$ such that $|F|>3^{n-1}$ and $\Delta(K_3^{\otimes n}[F])=\epsilon 2^n$. For any $m\in\Z_{\geq2}$, let
  $$Q_m:=\{0,1,2\}^m\setminus\{0,1\}^m.$$
  Every vector in $\Z_3^m$ has a neighbour in $\{0,1\}^m$, and hence
  $$\Delta(K_3^{\otimes m}[Q_m])\leq2^m-1.$$
  Therefore,
  $$\Delta(K_3^{\otimes(n+m)}[F\times Q_m])
  \leq(2^m-1)\epsilon2^n
  =(1-2^{-m})\epsilon2^{n+m}.$$
  Since $(1-2^{-m})\epsilon<\epsilon$, the definition of $\epsilon$ implies that $|F\times Q_m|\leq3^{n+m-1}$. Thus
  $$|F|(3^m-2^m)\leq3^{n+m-1},$$
  or equivalently,
  $$|F|\left(1-\left(\frac23\right)^m\right)\leq3^{n-1}.$$
  Letting $m\to\infty$ gives $|F|\leq3^{n-1}$, a contradiction.
\end{proof}

\subsubsection{The jump at $\Delta = 2^{n-1}$}\label{ssec:jump-2n-1}

Recall that $\Delta_{K_3^{\otimes n}}(2\cdot 3^{n-1}) = 2^{n-1}$.

We now prove part~\ref{thm:jumphalf} of Theorem~\ref{thm-jump}; namely,
there exists an absolute constant $c > 0$ such that for any $F \subset \Z_3^n$ such that $|F| > 2\cdot 3^{n-1}$ we have $\Delta(K_3^{\otimes n}[F]) \geq 2^{n-1} + c\cdot2^n$, i.e., $\Delta_{K_3^{\otimes n}}(2\cdot 3^{n-1} + 1) \geq 2^{n-1} + c\cdot 2^n$.

The proof is analogous to that of the first part, but we include it for completeness. 

\begin{proof}

Suppose that the induced subgraph of $K_3^{\otimes n}$ on the set $F$ has maximum degree less than or equal to $2^{n-1} + d$ and $|F| = 2\cdot 3^{n-1} + 1$. 

Use the Fourier notation from Subsection~\ref{gap1}: write $\un_F=\sum_{y\in\Z_3^n}\hat f(y)u_y$, set $\alpha:=|F|/3^n$ and $\varepsilon:=d/2^n$, and let $P_j(\un_F)$ denote the projection onto level $j$. Again it suffices to consider $\varepsilon<1/1000$. We split into two cases according to the size of $\|P_{\geq2}(\un_F)\|_2^2$: a large high-level component forces the average degree to be too large, whereas a small one allows us to apply Lemma~\ref{lemm}.

\textbf{Case 1: $||P_{\geq 2}(\un_F)||_2^2 > \frac{8}{3}\alpha\varepsilon$.}

Similarly to the first part of the theorem, we have
\begin{multline*}
  2|E(K_{3}^{\otimes n}[F])| = \un_F^TB_n \un_F \geq 2^n\cdot 3^n\cdot ||P_0(\un_F)||^2_2 - 2^{n-1}\cdot 3^n\cdot ||P_1(\un_F)||^2_2 - 2^{n-3}\cdot 3^n\cdot ||P_{\geq 2}(\un_F)||^2_2 =\\= 3^n(2^n\alpha^2 - 2^{n-1}(\alpha - \alpha^2 - ||P_{\geq 2}(\un_F)||^2_2) - 2^{n-3}||P_{\geq 2}(\un_F)||^2_2) =\\= 3^n\cdot 2^{n-1}(3\alpha^2 - \alpha) + 3^n\cdot 2^{n-3}\cdot 3||P_{\geq 2}(\un_F)||^2_2 >\\> 3^n\cdot 2^{n-1}(3\alpha^2 - 2\alpha) + 3^n\cdot 2^{n-1}\cdot\alpha + 3^n\cdot 2^n \cdot \alpha\varepsilon \geq (2^{n-1} + d)|F|,
\end{multline*}
where the last inequality holds because $\alpha \geq \frac23$, and so the average degree of $K_{3}^{\otimes n}[F]$ is strictly greater than the claimed maximum degree, contradiction.

\textbf{Case 2: $||P_{\geq 2}(\un_F)||_2^2 \leq \frac{8}{3}\alpha\varepsilon$.}

In this case we apply Lemma~\ref{lemm} with $k=3$ and obtain that there exists a Boolean function $g$ depending on at most one coordinate such that 

$$||\un_F-g||_2^2 \leq \frac{K}{1 - \alpha - \frac{8}{3}\varepsilon}\cdot \frac{8}{3}\varepsilon.$$

For sufficiently large $n$ we have $\alpha \leq \frac{2}{3} + \frac{2}{3}\varepsilon$, and so $$||\un_F-g||_2^2 \leq \frac{K}{1 -10\varepsilon}\cdot 8\varepsilon \leq 9K\varepsilon.$$

We can assume without loss of generality that $g$ depends only on the first coordinate.

Let $F_i$ and $\alpha_i$ be as in the previous subsection, ordered so that $|F_0|\geq|F_1|\geq|F_2|$. Since $\alpha_2>0$, we can delete one vertex to obtain a set $F'$ satisfying $\alpha'_0\geq\alpha'_1\geq\alpha'_2>0$ and $\alpha'_0+\alpha'_1+\alpha'_2=2$. Set $F'_{01}:=F'_0\cup F'_1$.

Consider the bipartite subgraph (not induced) $G''_n$ of $K_3^{\otimes n}$ with one part $\{0,1\}\times \{0,1,2\}^{n-1}$ and the other part $\{2\} \times \{0,1,2\}^{n-1}$ formed by all the edges between the parts. The matrix of this bipartite graph is $\begin{pmatrix}
  B_{n-1}\\
  B_{n-1}
\end{pmatrix}$. Its singular values are $\sqrt{2}\,2^j$, $j=0,\ldots,n-1$. The largest, $\sigma_1=\sqrt{2}\,2^{n-1}$, has multiplicity one, and the second largest is $\sigma_2=\sqrt{2}\,2^{n-2}$. The bipartite expander mixing lemma therefore gives

$$\bigg\lvert \frac{e(F_{01}', F_2')}{|E(G''_n)|} - \frac{\alpha'_0+\alpha'_1}{2}\alpha'_2\bigg\rvert \leq \frac{\sigma_2}{\sigma_1}\sqrt{\frac{\alpha'_0+\alpha'_1}{2}\Big(1-\frac{\alpha'_0+\alpha'_1}{2}\Big)\alpha'_2(1-\alpha'_2)}.$$

This implies $$\alpha'_2\Big(\frac{1}{2} + \varepsilon\Big) \geq \frac{e(F_{01}', F_2')}{|E(G''_n)|} \geq \frac{\alpha'_0 + \alpha'_1}{2}\alpha'_2 - \frac{1}{2}\sqrt{\frac{\alpha'_0+\alpha'_1}{2}\Big(1-\frac{\alpha'_0+\alpha'_1}{2}\Big)\alpha'_2(1-\alpha'_2)}.$$

Using $\alpha_0' + \alpha_1' = 2 - \alpha_2'$ and $\alpha_2' > 0$, we obtain $$\sqrt{(2-\alpha_2')(1 - \alpha_2')} \geq 2(1-\alpha_2') - 4\varepsilon.$$

Then either $2(1 - \alpha_2') - 4\varepsilon < 0$ and so $\alpha_2' > 1 - 2\varepsilon$, or $(2 - \alpha_2')(1-\alpha_2') \geq (2(1-\alpha_2') - 4\varepsilon)^2 \geq 4(1 - \alpha_2')(1 - \alpha_2' - 4\varepsilon)$, and so $\alpha_2' \geq \frac{2}{3} - \frac{16}{3}\varepsilon$. So $\alpha_2' \geq \frac{2}{3} - \frac{16}{3}\varepsilon$ anyway. Since we only deleted vertices to obtain $F'$, we also have $\alpha_2 \geq \frac{2}{3} - \frac{16}{3}\varepsilon$.

If $g(x_1=2)=1$, then, since $\alpha_2\leq\alpha\leq\frac{2}{3}+\varepsilon$, $||\un_F-g||_2^2 \geq \frac{1}{9} - \frac{\varepsilon}{3}$. If $g(x_1=2)=0$, then, since $\alpha_2\geq\frac{2}{3}-\frac{16}{3}\varepsilon$, $||\un_F-g||_2^2 \geq \frac{2}{9} - \frac{16\varepsilon}{9}$. Then for $\varepsilon \leq \frac{1}{13}$ it always holds that $||\un_F-g||_2^2 \geq \frac{1}{9} - \frac{\varepsilon}{3} \geq \frac{1}{10}$.

Overall, 

$$\frac{1}{10}\leq ||\un_F-g||_2^2\leq 9K\varepsilon,$$
and so
$$\frac{d}{2^n} = \varepsilon \geq \frac{1}{90K}.$$
\end{proof}

\subsection{Examples and the difference between maximum and average degree}

\subsubsection{Examples and an upper bound}\label{ssec:ex-tensor}

In this subsection, we give four vertex sets $F\subseteq V(K_3^{\otimes n})$ for which $\Delta(K_3^{\otimes n}[F])=a_{K_3^{\otimes n}}(|F|)$, and hence the induced subgraph is optimal with respect to maximum degree. These examples also yield a simple upper bound on $\Delta_{K_3^{\otimes n}}(m)$ for every $m$. Figure~\ref{fig2} shows the resulting asymptotic bounds: the blue curve is the average-degree lower bound, and the orange curve is the upper bound. The curves coincide at $4/9$, $16/27$, $2/3$, and $8/9$, as shown in Claim~\ref{cl:examples-tensor}. The jumps in the orange curve at $1/3$ and $2/3$ are unavoidable by Theorem~\ref{thm-jump}; moreover, Section~\ref{ssec:no-big-perfect} shows that the gap between the two curves is unavoidable for almost every $\alpha>2/3$.

\begin{Claim}\label{cl:examples-tensor}

  \begin{enumerate}
  \item $\Delta_{K_3^{\otimes n}}(\frac{4}{9}\cdot 3^{n}) = a_{K_3^{\otimes n}}(\frac{4}{9}\cdot 3^{n})= \frac{1}{4}\cdot 2^{n}$;
  \item $\Delta_{K_3^{\otimes n}}(\frac{16}{27}\cdot 3^{n}) = a_{K_3^{\otimes n}}(\frac{16}{27}\cdot 3^{n}) = \frac{7}{16}\cdot 2^{n}$;
  \item $\Delta_{K_3^{\otimes n}}(\frac23\cdot 3^{n}) = a_{K_3^{\otimes n}}(\frac{2}{3}\cdot 3^{n})= \frac12\cdot 2^{n}$;
  \item $\Delta_{K_3^{\otimes n}}(\frac{8}{9}\cdot 3^{n}) = a_{K_3^{\otimes n}}(\frac{8}{9}\cdot 3^{n}) = \frac78\cdot 2^{n}$.
  \end{enumerate}
\end{Claim}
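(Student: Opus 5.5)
The plan is to prove each of the four equalities by combining a lower bound that is already available with an explicit construction. Since $\Delta_G(m)\ge a_G(m)$ always holds, Theorem~\ref{thm-av} supplies the lower bound. At densities $4/9$, $16/27$ and $2/3$ it gives $1-\frac{1/3}{\alpha}$, which equals $\frac14$, $\frac{7}{16}$ and $\frac12$ respectively. At density $8/9$ it gives $2-\frac{1}{\alpha}=\frac78$. So all the work is in the upper bound: for each density we need a set $F$ of exactly that size whose induced subgraph is \emph{regular}, with degree equal to the average-degree bound.

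To build these sets I would use two tools. The first is the lifting claim: $F\mapsto F\times\{0,1,2\}$ preserves the normalized maximum degree, so it suffices to find one example in a fixed dimension $n_0$ and lift it. The second is the product rule for tensor powers: for $F\subseteq\Z_3^{a}$ and $G\subseteq\Z_3^{b}$, the induced subgraph on $F\times G\subseteq\Z_3^{a+b}$ is the tensor product of the two induced subgraphs, so it has degrees $\deg_F\cdot\deg_G$. In particular, products of regular examples are regular.

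The cases $2/3$ and $4/9$ are immediate.
\begin{itemize}
\item For density $2/3$, take $F=\{0,1\}\times\Z_3^{n-1}$. Every vertex has exactly $2^{n-1}$ neighbours in $F$: they are the vertices with the other first coordinate in $\{0,1\}$ and all remaining coordinates different.
\item For density $4/9$, take $n_0=2$ and $F=\{0,1\}^2$. This induces the perfect matching $\{(0,0),(1,1)\}$, $\{(0,1),(1,0)\}$, so it is $1$-regular, and $1=\frac14\cdot 2^2$. Lifting gives all $n\ge2$.
\item Equivalently, $\{0,1\}^2\times\Z_3^{n-2}$ is the product of the $1$-regular set $\{0,1\}^2$ with the $2^{n-2}$-regular full space.
\end{itemize}

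The cases $16/27$ and $8/9$ are the main obstacle. Integrality of the target degree $\frac7{16}2^{n_0}$ and $\frac78 2^{n_0}$ forces $n_0\ge4$ and $n_0\ge3$ respectively, so a construction in a somewhat larger dimension is needed.
\begin{itemize}
\item For $8/9$ with $n_0=3$, I would look for the complement of a set $S$ of $3$ vertices. Then $F=\Z_3^3\setminus S$ is $7$-regular precisely when $S$ is independent and every vertex outside $S$ has exactly one neighbour in $S$, that is, when $S$ is a perfect code in $K_3^{\otimes3}$. The count $3\cdot 8=24=27-3$ is consistent. I would first search among affine lines $S=\{v,v+w,v+2w\}$ and among coordinate-structured triples, checking that the neighbourhoods $v+\{1,2\}^3$ are pairwise disjoint.
\item If no such $S$ exists in dimension $3$, I would instead try an $F$ of density $8/9$ in dimension $4$ or $6$ of the form $\Z_3^{n_0}\setminus (\text{perfect code})$.
\item For $16/27$ I would try a mixture in dimension $n_0=4$ of the two regular pieces above, namely sets of the form $\{0,1\}\times A\;\cup\;\{2\}\times B$. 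I would choose $A,B\subseteq\Z_3^{3}$ so that the degree contributions from each layer balance exactly at $7$ for every vertex. The natural candidates are $A$ of density $2/3$ and $B$ of density $4/9$ built from $\{0,1\}$-type patterns, with a shift so that the layers interact correctly.
\end{itemize}
In each case, once the candidate is written down, verifying regularity is a routine finite neighbourhood count. Lifting then finishes the argument, and the lower bound from Theorem~\ref{thm-av} gives equality.
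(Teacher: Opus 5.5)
Your lower bounds and your constructions for densities $2/3$ and $4/9$ are correct and coincide with the paper's ($\{0,1\}\times\Z_3^{n-1}$ and $\{0,1\}^2\times\Z_3^{n-2}$). However, for $8/9$ and $16/27$, which are the only nontrivial parts of the claim, you give no construction, only a search plan. The plan for $8/9$ cannot succeed in any dimension. Any two vertices $u,v$ of $K_3^{\otimes n}$ have a common neighbour: in each coordinate, choose a symbol different from both $u_i$ and $v_i$. If $u,v\in S$, that neighbour either lies in $S$, which breaks independence, or lies outside $S$ with two neighbours in $S$. So $K_3^{\otimes n}$ has no perfect code with two or more codewords. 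In particular $\Delta_{K_3^{\otimes 3}}(24)=8\neq 7$, so item 4 genuinely needs $n\geq 4$, and $n_0=3$ is ruled out. Your own counting gives the correct requirement. If $F=\Z_3^{n_0}\setminus S$ with $|S|=3^{n_0-2}$ has maximum degree $\frac78 2^{n_0}$, then $e(S,F)=2^{n_0}|S|-2e(S)\geq 2^{n_0-3}|F|=2^{n_0}|S|$. Hence $S$ must be independent, and every vertex outside $S$ must have exactly $2^{n_0-3}$ neighbours in $S$. For $n_0=4$ that is two neighbours, not one.

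The paper takes $n_0=4$ and $S=\{0000,1110,2220,1201,2011,0121,2102,0212,1022\}$, the span of $1110$ and $1201$ (the tetracode). This code is self-dual and all its nonzero words have weight $3$. Hence $\un_S=\frac19\sum_{y\in S}u_y$ lives on Fourier levels $0$ and $3$, and level $3$ has eigenvalue $-2$ in dimension $4$. This gives $B_4\un_S=2(u_0-\un_S)$, so $\Z_3^4\setminus S$ is $14$-regular; then lift it by $\times\Z_3^{n-4}$. Item 2 needs no search at all. By your own product rule, $F\times\{0,1\}$ has the same degrees as $F$, since $\{0,1\}$ induces a single edge of $K_3$. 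So the $(n-1)$-dimensional density-$8/9$ example times $\{0,1\}$ has density $\frac89\cdot\frac23=\frac{16}{27}$ and is $\frac78 2^{n-1}=\frac{7}{16}2^n$-regular. In your ansatz $\{0,1\}\times A\cup\{2\}\times B$, this corresponds to $A$ of density $8/9$ and $B=\emptyset$, not to $A$ of density $2/3$ and $B$ of density $4/9$. For instance, with $A=\{0,1\}\times\Z_3^2$, every vertex $(2,b)$ already has at least $8>7$ neighbours in $\{0,1\}\times A$.
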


\begin{proof}
  In each case, the average-degree bound gives the lower bound. The corresponding constructions giving the upper bound are:
  \begin{enumerate}
  \item $F^1_n = \{0,1\}^2 \times \Z_3^{n-2}$;
  \item $F_n^2 = F_{n-1}^4 \times \{0,1\}$;
  \item $F^3_n = \{0,1\} \times \Z_3^{n-1}$;
  \item $F_n^4 = (\Z_3^4\backslash\{0000, 1110, 2220, 1201, 2011, 0121, 2102, 0212, 1022\}) \times \Z_3^{n-4}$.
  \end{enumerate}
\end{proof}

These constructions immediately give an upper bound for every $m$: $\Delta_{K_3^{\otimes n}}(m)\leq a_{K_3^{\otimes n}}(m_0)$, where $m_0$ is the smallest number among the four sizes in the claim and $3^n$ that satisfies $m\leq m_0$. 
A simple probabilistic thinning argument usually gives a sharper bound.

\begin{Claim}\label{cl:easy-prob-constr}
  Suppose that $\Delta_{K_3^{\otimes n}}(m)=d$ with $d\geq2$. Then, for every $p\in(0,1)$,
  $$\Delta_{K_3^{\otimes n}}\!\left(\left\lceil pm\left(1-\frac{1}{d^2}\right)\right\rceil\right) \leq pd + \sqrt{d\ln d}.$$
\end{Claim}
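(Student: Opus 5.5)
Let $F$ be a set of size $m$ with $\Delta(K_3^{\otimes n}[F])=d$, which exists by hypothesis. Choose a random subset $S\subseteq F$ by keeping each vertex independently with probability $p$. Then remove every vertex $v\in S$ whose degree in $K_3^{\otimes n}[S]$ exceeds $t:=pd+\sqrt{d\ln d}$, and call the result $S'$. Deleting vertices never increases degrees, so every vertex of $S'$ has degree at most $t$ in the induced subgraph. It remains to show that $\mathbb{E}|S'|\geq pm(1-1/d^2)$. Since $|S'|$ is an integer, some outcome then has $|S'|\geq\lceil pm(1-1/d^2)\rceil$, and we may delete extra vertices to reach exactly that size. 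This works because $\Delta_G(\cdot)$ is monotone nondecreasing in its argument: removing vertices from an optimal set cannot raise the maximum degree.

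For the expectation, fix $v\in F$. Then $\Pr[v\in S']=p\cdot\Pr[\deg_S(v)\leq t\mid v\in S]$. Conditional on $v\in S$, the degree $\deg_S(v)$ is a sum of $\deg_F(v)\leq d$ independent Bernoulli$(p)$ variables. It is therefore stochastically dominated by $X\sim\mathrm{Bin}(d,p)$, which has mean $pd$. We need the bound $\Pr[X>pd+\sqrt{d\ln d}]\leq 1/d^2$. Hoeffding's inequality gives $\Pr[X-pd\geq s]\leq\exp(-2s^2/d)$. With $s=\sqrt{d\ln d}$ this equals $\exp(-2\ln d)=d^{-2}$. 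Hence $\Pr[v\in S']\geq p(1-d^{-2})$, and linearity of expectation gives $\mathbb{E}|S'|\geq pm(1-d^{-2})$.

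There is one subtlety, in the boundary case where the Hoeffding event is not strict. Hoeffding bounds $\Pr[X\geq pd+s]$, and this already covers $\Pr[X>t]$, so no issue arises. The condition $d\geq 2$ ensures $\ln d>0$, so the bound is meaningful. The main (mild) obstacle is getting the constant in the concentration inequality to match exactly $\sqrt{d\ln d}$ and $1/d^2$. Hoeffding's constant $2$ in the exponent gives precisely this, so I would use Hoeffding rather than a Chernoff multiplicative bound.
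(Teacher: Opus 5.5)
Your proposal is correct and matches the paper's argument: take a $p$-random subset of an optimal set, discard the vertices whose induced degree exceeds $pd+\sqrt{d\ln d}$, bound each discard probability by $p\,d^{-2}$ using Hoeffding with deviation $\sqrt{d\ln d}$, then finish by linearity of expectation and passing to a subset of the required size. The only cosmetic difference is that you dominate $\deg_S(v)$ by a $\mathrm{Bin}(d,p)$ variable, whereas the paper applies Hoeffding to the $\deg_F(v)$ summands directly and then uses $\deg_F(v)\le d$ in the exponent.
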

\begin{proof}
  Let $G$ denote the ambient product graph, and choose $F\subseteq V(G)$ with $|F|=m$ and $\Delta(G[F])=d$. Let $F_p$ be a $p$-random subset of $F$, and set
  $$B:=\{x\in F_p:\deg_{F_p}(x)>pd+\sqrt{d\ln d}\}.$$
  Conditional on $x\in F_p$, the random variable $\deg_{F_p}(x)$ is a sum of $\deg_F(x)$ independent Bernoulli variables. Hence, for $t>0$ and $\deg_F(x)>0$, the Chernoff--Hoeffding bound gives
  $$\Pp[x\in B]\leq p\,\Pp[\deg_{F_p}(x)\geq p\deg_F(x)+t\mid x\in F_p]
  \leq p\exp\!\left(-\frac{2t^2}{\deg_F(x)}\right)
  \leq p\exp\!\left(-\frac{2t^2}{d}\right).$$
  (The assertion is trivial when $\deg_F(x)=0$.) Taking $t=\sqrt{d\ln d}$ and summing over $x\in F$ yields $\mathbb{E}|B|\leq mp/d^2$. Therefore
  $$\mathbb{E}(|F_p|-|B|)\geq pm\left(1-\frac{1}{d^2}\right).$$
  For some realization, deleting $B$ leaves at least $\left\lceil pm(1-d^{-2})\right\rceil$ vertices and maximum degree at most $pd+\sqrt{d\ln d}$. Taking a subset of the required size proves the claim.
\end{proof}

The same argument applies to any graph with a prescribed maximum degree, in particular for induced subgraphs of $K_3^{\Box n}$, and we shall use the analogue in Claim~\ref{cl:easy-pb-constr-Hamming}. 

For constant $p$ and $d=\Omega(2^n)$, we obtain
$$\Delta_{K_3^{\otimes n}}\bigl(pm(1-O(2^{-2n}))\bigr)\leq pd\bigl(1+O(n2^{-n/2})\bigr).$$ This yields Figure~\ref{fig2}.

\begin{figure}[h!]
\includegraphics[width=\textwidth]{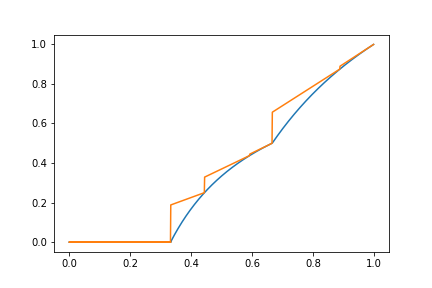}
\caption{Upper and lower bounds on $\inf_{n,\,m\geq \alpha 3^n} \frac{\Delta_{K_3^{\otimes n}}(m)}{2^n}$ as a function of $\alpha$.}
\label{fig2}

\end{figure}

We shall also use the following elementary observation, whose Hamming-product analogue is slightly less direct, see Subsection~\ref{sec:transform23hamming}.

\begin{Claim}\label{cl:transform23tensor}
For any $m$ we have $\Delta_{K^{\otimes (n+1)}_3}(2m) \leq \Delta_{K^{\otimes n}_3}(m)$, since for any $F$ realizing $\Delta_{K^{\otimes n}_3}(m)$ we have $\Delta(K_3^{\otimes (n+1)}[F\times \{0,1\}]) = \Delta_{K^{\otimes n}_3}(m)$.
\end{Claim}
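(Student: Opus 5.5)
The plan is to verify the two halves of the claim separately. The inequality $\Delta_{K^{\otimes (n+1)}_3}(2m) \leq \Delta_{K^{\otimes n}_3}(m)$ then follows from the equality for the product set $F\times\{0,1\}$.

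First, fix $F\subseteq\Z_3^n$ with $|F|=m$ and $\Delta(K_3^{\otimes n}[F])=\Delta_{K_3^{\otimes n}}(m)=:d$. Put $F':=F\times\{0,1\}\subseteq\Z_3^{n+1}$. Then $|F'|=2m$, so by definition $\Delta_{K_3^{\otimes(n+1)}}(2m)\leq\Delta(K_3^{\otimes(n+1)}[F'])$.

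Second, compute degrees in $F'$ using the tensor-product adjacency rule. Two vertices $(x,s),(x',s')$ are adjacent in $K_3^{\otimes(n+1)}$ if and only if $x\sim x'$ in $K_3^{\otimes n}$ and $s\neq s'$. For a vertex $(x,s)\in F'$ with $s\in\{0,1\}$, its neighbours in $F'$ are therefore exactly the pairs $(x',1-s)$ with $x'\in F$ and $x'\sim x$. This is because the last coordinate must differ from $s$ and must lie in $\{0,1\}$, which leaves only $1-s$. Hence
$$\deg_{F'}(x,s)=\deg_{F}(x).$$
Taking the maximum over $(x,s)\in F'$ gives $\Delta(K_3^{\otimes(n+1)}[F'])=\Delta(K_3^{\otimes n}[F])=d$. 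Combined with the first step, this proves both the stated equality and the inequality.

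There is no real obstacle; the only point needing care is the observation that the factor $\{0,1\}$ induces a single edge in $K_3$. The product graph $K_3^{\otimes(n+1)}[F']$ is therefore the bipartite double cover $K_3^{\otimes n}[F]\otimes K_2$, in which every vertex keeps its degree. One might also note the degenerate case $m=0$, where both sides are $0$, and the restriction $2m\leq 3^{n+1}$, which holds automatically since $m\leq 3^n$.
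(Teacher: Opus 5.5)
Your proposal is correct and takes the same approach as the paper, which justifies the claim by the product construction $F\times\{0,1\}$. Your degree computation $\deg_{F\times\{0,1\}}(x,s)=\deg_F(x)$, with the neighbours of $(x,s)$ being exactly the vertices $(x',1-s)$ such that $x'\in F$ and $x'\sim x$, supplies the verification the paper leaves implicit.
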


\subsubsection{The gap between the minimum possible average and maximum degrees (for large $|F|$)}\label{ssec:no-big-perfect}

In this subsection, we prove that for almost every $\alpha:=m/3^n\geq2/3$, the minimum possible maximum degree of an induced subgraph of the tensor product on $m$ vertices exceeds the minimum possible average degree by a positive fraction of the degree of the whole graph. This fraction is independent of $n$, although it depends on $\alpha$. We also show that the two minima can coincide, for some $n$, at only two densities in the range $2/3\leq\alpha<1$.

Let $G$ be a $\Delta$-regular graph on $N$ vertices. Let $F\subseteq V(G)$ satisfy $|F|=\alpha N$ and $\Delta(G[F])\leq d=:\beta\Delta$. Let $A$ be the adjacency matrix of $G$, and write its eigenvalues as $\Delta=\lambda_1\geq\lambda_2\geq\cdots\geq\lambda_N$. Decompose the characteristic vector of $F$ as $\un_F=f_1+\cdots+f_N$, where $f_i$ lies in the $\lambda_i$-eigenspace. Finally, set
$$\delta:=\frac{1}{\Delta}\min_{2\leq i\leq N}|\lambda_i-(-\Delta+d)|.$$

$\Delta(G[F]) \leq d$ implies

$$\left\|\Big(\Big(1-\frac{d}{\Delta}\Big)I_N+\frac{1}{\Delta}A\Big)\un_F\right\|_{\infty}\leq1,$$

because

$$\Big(\Big(\Big(1-\frac{d}{\Delta}\Big)I_N + \frac{1}{\Delta}A\Big)\un_F\Big)_v = \Big(1-\frac{d}{\Delta}\Big)\mathbf{1}_{\{v\in F\}} + \frac{1}{\Delta}\deg_F(v) = \begin{cases}
  \frac{\deg_F(v)}{\Delta}\leq 1, & v\notin F;
\\ 1 - \frac{d - \deg_F(v)}{\Delta} \leq 1, & v\in F.\end{cases}$$

Thus

$$\bigg\|\Big(\Big(1-\frac{d}{\Delta}\Big)I_N+\frac{1}{\Delta}A\Big)\un_F\bigg\|_2^2\leq N.$$

Hence,

\begin{multline*}
  N \geq (1-\beta)^2|F| + \frac{2}{\Delta}(1-\beta)\un_F^TA\un_F + \frac{1}{\Delta^2}\un_F^TA^2\un_F = (1-\beta)^2|F| + \sum_{i=1}^N||f_i||_2^2\Big(\frac{2}{\Delta}(1-\beta)\lambda_i + \frac{1}{\Delta^2}\lambda_i^2\Big) =\\= (1-\beta)^2|F| + \frac{|F|^2}{N}(2(1-\beta)+1) + \sum_{i=2}^N||f_i||_2^2\Big(\Big(1-\beta+\frac{\lambda_i}{\Delta}\Big)^2-(1-\beta)^2\Big) \geq \\ \geq (1-\beta)^2|F| + \frac{|F|^2}{N}(2(1-\beta)+1) + \Big(|F|-\frac{|F|^2}{N}\Big)(\delta^2-(1-\beta)^2).
\end{multline*}

Dividing by $N$, we get

$$1 \geq (1-\beta)^2\alpha + \alpha^2(3- 2\beta) + (\alpha - \alpha^2)(\delta^2 - (1-\beta)^2) = \alpha^2(2-\beta)^2 + (\alpha - \alpha^2)\delta^2.$$

For $K_3^{\otimes n}$ and $\alpha\geq2/3$, the normalized average-degree bound is
$$\beta_{\mathrm{av}}(\alpha):=2-\frac{1}{\alpha},$$
and hence $\alpha(2-\beta_{\mathrm{av}}(\alpha))=1$.

Consequently, if $2/3\leq\alpha<1$ and
$$\alpha\notin\left\{\frac{1}{1+2^{-(2r+1)}}:r\in\Z_{\geq0}\right\},$$
then there exists $\varepsilon(\alpha)>0$ such that, for every sequence $\rho_n\to0$,
$$
\liminf_{n\to\infty}
\min_{\substack{F\subseteq V(K_3^{\otimes n})\\ |F|\geq(\alpha-\rho_n)3^n}}
\frac{\Delta(K_3^{\otimes n}[F])}{2^n}
\geq\beta_{\mathrm{av}}(\alpha)+\varepsilon(\alpha).
$$
Thus there are no asymptotically optimal examples of the type constructed for the Hamming product in Section~\ref{sec:hamming}. The proof also shows that, at a density where the average-degree bound can be attained by the maximum degree, the characteristic function must have the form $f_1+f_j$, where $\lambda_j=-2^n+d$. Conversely, any such Boolean function of the required size gives an example.

At least two such examples exist: 

\begin{itemize}
  \item trivial example: $\beta = \frac{1}{2}$, $\alpha = \frac{2}{3}$, $\un_F=\mathbf{1}_{\{x_1\neq0\}}= \frac{2}{3}-\frac{1}{3}\omega^{x_1} - \frac{1}{3}\omega^{-x_1}$, the third example in Claim~\ref{cl:examples-tensor};
  \item another example: $\beta = \frac{7}{8}$, $\alpha = \frac{8}{9}$,
  \begin{multline*}
  \un_F = \frac{8}{9}-\frac{1}{9}(\omega^{x_1 + x_2 + x_3} +\omega^{-x_1-x_2-x_3}
  +\omega^{x_2-x_3+x_4}
  +\omega^{-x_2+x_3-x_4}
  +\\+\omega^{x_1-x_2+x_4}
  +\omega^{-x_1+x_2-x_4}
  +\omega^{-x_1+x_3+x_4}
  +\omega^{x_1-x_3-x_4})
  \end{multline*}
  or, equivalently,
  $F = (\Z_3^4\backslash\{0000, 1110, 2220, 1201, 2011, 0121, 2102, 0212, 1022\}) \times \Z_3^{n-4}$, the fourth example in Claim~\ref{cl:examples-tensor};
\end{itemize}

The preceding argument also shows that, in contrast to the Hamming-product case (Statement~\ref{st-perfect-ex}), there are no "perfect" examples for size fractions $1 > \alpha \geq \frac{2}{3}$ apart from the two values above:

\begin{Claim}
  Suppose that for a subset $F \subset \Z_3^n$ with $3^n > |F| \geq 2\cdot 3^{n-1}$ we have $\Delta(K_3^{\otimes n}[F])=a_{K_3^{\otimes n}}(|F|)$. Then we have $\frac{|F|}{3^n} = \frac{2}{3}$ or $\frac{|F|}{3^n} = \frac{8}{9}$.
\end{Claim}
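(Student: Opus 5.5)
We take $F$ with $\beta:=\Delta(K_3^{\otimes n}[F])/2^n=a_{K_3^{\otimes n}}(|F|)/2^n=\beta_{\mathrm{av}}(\alpha)=2-\frac1\alpha$, where $\alpha=|F|/3^n\in[\frac23,1)$, and rerun the spectral computation just above in its exact (non-asymptotic) form. Dividing by $N=3^n$, it gives
$$1\geq \alpha(1-\beta)^2+\alpha^2(3-2\beta)+\sum_{i\geq2}\frac{\|f_i\|_2^2}{N}\Big(\Big(1-\beta+\frac{\lambda_i}{2^n}\Big)^2-(1-\beta)^2\Big).$$
Since $\alpha(2-\beta)=1$, the first two terms together with the part $-(1-\beta)^2\sum_{i\ge2}\|f_i\|^2/N=-(1-\beta)^2(\alpha-\alpha^2)$ sum to exactly $1$. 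Hence
$$\sum_{i\geq2}\|f_i\|_2^2\Big(1-\beta+\frac{\lambda_i}{2^n}\Big)^2\leq0.$$
Every term is nonnegative, so each nonconstant Fourier component of $\un_F$ must lie in an eigenspace with $\lambda_i=(\beta-1)2^n=-(2^n-d)$. Since $\alpha<1$, $\un_F$ is not constant, so such a component exists. This recovers the remark above that $\un_F=f_1+f_j$ with $\lambda_j=-2^n+d$.

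Next we use the spectrum of $B_n$. The nontrivial eigenvalues are $(-1)^k2^{n-k}$ with $k\ge1$, so $\beta-1=(-1)^k2^{-k}$. Because $\beta=2-1/\alpha\in[\frac12,1)$, we have $\beta-1<0$, so $k$ is odd. Writing $k=2r+1$ gives $\beta=1-2^{-(2r+1)}$ and therefore $\alpha=\frac{1}{1+2^{-(2r+1)}}$. So $\un_F=\alpha+g$, where $g$ is supported on Fourier characters $u_y$ with $|y|=k$ exactly.

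The main step is to exclude $r\geq2$. We plan to use Booleanity: $\un_F^2=\un_F$ gives $g^2=(1-2\alpha)g+(\alpha-\alpha^2)$. The left side is supported on characters $y+y'$ with $|y|=|y'|=k$, and the right side has only levels $0$ and $k$. A cleaner route uses the fact that $\un_F$ takes only values $0,1$, so $g$ takes only the two values $-\alpha$ and $1-\alpha$. Evaluate $g$ at a point and average over the coset of a coordinate direction $e_i$. A character $u_y$ with $y_i\ne0$ averages to zero over $\{x,x+e_i,x+2e_i\}$. Splitting $g=g_{i}^{0}+g_i^{\neq}$ according to whether $y_i=0$, we find that $g_i^{0}(x)=\frac13\sum_t g(x+te_i)$ takes values in $\{-\alpha+\frac{j}{3}: j=0,\dots,3\}$. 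We iterate this over $k$ coordinates; here the level-$k$ homogeneity is used, since averaging over a $k$-dimensional coordinate subcube kills every character except those vanishing there. After averaging over $n-k+1$ coordinates, wait — more effective is to restrict to a coordinate set $S$ with $|S|=k$ containing the support of some $y$ and average out the complement. What remains is a function on $\Z_3^k$ of the form $\alpha+\sum_{\mathrm{supp}\,y=S}c_yu_y$, whose values lie in $3^{-(n-k)}\Z$ and in $[0,1]$.

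The key arithmetic point is the denominator of $\alpha$. Since $|F|=\alpha3^n$ is an integer and $\alpha=\frac{2^{2r+1}}{2^{2r+1}+1}$, the number $2^{2r+1}+1$ must divide $3^n$, so it is a power of $3$. By Catalan/Mihăilescu, or more elementarily by the factorisation $2^{m}+1=3^s$ (Zsigmondy, or direct analysis mod $8$ and mod $9$), the only solutions are $m=1$ and $m=3$. This gives $\alpha=\frac23$ ($r=0$) or $\alpha=\frac89$ ($r=1$). This divisibility argument alone finishes the proof and makes the Booleanity analysis above unnecessary. So the plan is: (1) the exact equality forces the Fourier support onto a single level; (2) the sign and value of $\beta$ force $\alpha=1/(1+2^{-(2r+1)})$; (3) integrality of $|F|$ forces $2^{2r+1}+1$ to be a power of $3$; (4) solve $2^m+1=3^s$ with $m$ odd. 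For step (4), reducing mod $4$ shows $s$ is odd when $m\ge2$, and reducing mod $8$ gives $3^s\equiv3$, so $2^m\equiv2\pmod 8$ and $m=1$ — unless the case $m=3$ arises, where $9=8+1$. The elementary verification of this small exponential Diophantine equation, handling small $m$ separately, is the only delicate point; everything else is already in the computation preceding the statement.
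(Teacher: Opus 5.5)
Your steps (1)--(3) are the paper's argument. The exact spectral inequality forces every nonconstant Fourier component of $\un_F$ into the eigenspace with eigenvalue $-2^n+d$. Since $\beta-1<0$, that eigenvalue must be $-2^{n-(2r+1)}$, which gives $\alpha=1/(1+2^{-(2r+1)})$. Integrality of $|F|$, together with $\gcd(2^{2r+1},2^{2r+1}+1)=1$, then makes $2^{2r+1}+1$ a power of $3$. Citing Mih\u{a}ilescu's theorem does close step (4), though it is overkill. The abandoned Booleanity digression can simply be deleted.

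The elementary verification you actually sketch for step (4) is wrong, even though you call it the only delicate point.
\begin{itemize}
\item For $m\ge2$ we have $2^m+1\equiv1\pmod 4$. Since $3\equiv-1\pmod 4$, this forces $s$ to be \emph{even}, not odd.
\item With $s$ even, $3^s\equiv1\pmod 8$, so the reduction modulo $8$ yields no contradiction at all.
\item Your parity claim, if it were true, would also exclude the genuine solution $(m,s)=(3,2)$. That is why the ``unless $m=3$'' clause cannot be reconciled with it.
\end{itemize}

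Here are two correct elementary finishes.
\begin{itemize}
\item Write $s=2t$. Then $2^m=(3^t-1)(3^t+1)$ is a product of two powers of $2$ that differ by $2$. Hence $3^t-1=2$, so $t=1$ and $(m,s)=(3,2)$. The case $m=1$ gives $s=1$.
\item Alternatively, follow the paper. For $m\ge4$ one gets $3^s\equiv1\pmod{16}$. Since $3$ has order $4$ modulo $16$, this gives $4\mid s$. Then $5\mid 3^s-1=2^m$, a contradiction. The cases $m\le3$ are checked by hand.
\end{itemize}
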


\begin{proof}
  By the above, we should have $|F| = \frac{1}{1+2^{-(2k+1)}} \cdot 3^n = \frac{2^{2k+1}\cdot 3^n}{2^{2k+1}+1}$. Since $|F| \in \Z_{> 0}$, this implies $2^{2k+1}+1 = 3^q$ for some non-negative integer $q$. It suffices to show that $(1,1)$ and $(3, 2)$ are the only non-negative integer solutions $(p,q)$ for $2^{p}+1 = 3^q$. Suppose that $p \geq 4$. Then $3^q \equiv 1 \mod 16$. Then, since $\text{ord}_{\Z_{16}^*}(3) = 4$, we have $q = 4q'$, $q'\in \Z_{\geq 0}$. Then $2^p \equiv 3^{4q'} - 1 \equiv 1^{q'} - 1 = 0 \mod 5$, contradiction. Also $2 \neq 3^q$, $5 \neq 3^q$, and so there are no solutions with $p = 0$ or $p = 2$.
\end{proof}

\subsubsection{No perfect examples for small degrees}\label{ssec:no-small-perfect}

\begin{Statement}
  There is no $3^{n-1} < m \leq \frac{6}{17} \cdot 3^n$ such that $\Delta_{K_3^{\otimes n}}(m) = a_{K_3^{\otimes n}}(m)$. 
\end{Statement}

\begin{proof}
  Assume the contrary, i.e., there exists $F$ of size $m$ such that $\Delta_{K_3^{\otimes n}}(m) = a_{K_3^{\otimes n}}(m)$. Set $d:=\Delta_{K_3^{\otimes n}}(m)$. 

  Denote $\alpha := \frac{|F|}{3^n}$. Let, as in the previous sections, $F_i := F\cap \{x_1 = i\}$ for $i = 0, 1, 2$. Let $\alpha_i := \frac{|F_i|}{3^{n-1}}$. Assume without loss of generality $\alpha_0 \geq \alpha_1 \geq \alpha_2$. Since $m > 3^{n-1}$, $\alpha_1 > 0$. Delete some $(\alpha - \frac{1}{3})\cdot 3^n$ elements from $F$ getting a set $F'$ such that $\alpha'_0 + \alpha'_1 + \alpha'_2 = 1$ and still $\alpha'_0 \geq \alpha'_1 \geq \alpha'_2$ and $\alpha'_1 > 0$.

  By exactly the same argument as in the proof of part~\ref{thm:jump0} of Theorem~\ref{thm-jump}, we have $\alpha'_0 - \alpha'_2 \leq 8\varepsilon - 8\varepsilon^2$.

  Since we deleted $(3\alpha -1)\cdot 3^{n-1} = 3\alpha\varepsilon\cdot 3^{n-1}$ (since $d$ is the minimum possible average degree), for the actual set $F$ we get $\alpha_0 - \alpha_2 \leq 8\varepsilon - 8\varepsilon^2 + 3\alpha\varepsilon \leq 9\varepsilon$ for $\varepsilon \leq \frac{1}{2}$.

  This gives the following upper bound on the absolute values of the first level Fourier coefficients $\hat f(\pm e_1)$ of the characteristic function $\un_F =: f$ of $F$.

  $$\hat f(e_1) = \frac{1}{3^n}\sum_{x\in \Z_3^n} f(x)\omega^{-x_1} = \frac{1}{3^n}(|F_0|+\omega^2|F_1| + \omega|F_2|) = \frac{\alpha_0 + \omega^2\alpha_1 + \omega\alpha_2}{3};$$

  \begin{multline*}
  |\hat f(e_1)|^2 = |\hat f(-e_1)|^2 = \frac{\alpha_0^2 + \alpha_1^2 + \alpha_2^2 - \alpha_0\alpha_1 - \alpha_1\alpha_2 - \alpha_0\alpha_2}{9} =\\= \frac{(\alpha_0 - \alpha_1)^2+(\alpha_1-\alpha_2)^2+(\alpha_0-\alpha_2)^2}{18} \leq \frac{(\alpha_0 - \alpha_2)^2}{9} \leq 9\varepsilon^2.
  \end{multline*}

  Recall that for a family of maximum degree $d$ the Fourier decomposition is concentrated on the first two levels, namely, $||P_{\geq 2}(f)||_2^2 \leq \frac{2}{3}(\alpha\frac{d}{2^{n-2}} - 2(3\alpha^2 - \alpha)) = \frac{2}{3}(4\alpha\varepsilon-6\alpha^2\varepsilon) \leq \frac{4}{9}\varepsilon$.

  Note that the characteristic function of the set $F+e_1$ is $f^+(x) = f(x-e_1)$, and thus

  \begin{multline*}
  \gamma:=\frac{|F\mathbin{\triangle}(F+e_1)|}{3^n} = ||f - f^+||_2^2 = \sum_{y\in \Z_3^n}|\hat f(y)|^2|1-\omega^{-y_1}|^2 = 3\sum_{y : y_1 \neq 0}|\hat f(y)|^2\leq \\ \leq 3(|\hat f(e_1)|^2 + |\hat f(-e_1)|^2 + ||P_{\geq 2}(f)||_2^2) \leq 3(18\varepsilon^2 + \frac{4}{9}\varepsilon) = 54\varepsilon^2 + \frac{4}{3}\varepsilon.
  \end{multline*}

  Let $L_t^\psi$ denote the number of lines in $\Z_3^n$ parallel to $\psi\in\{1,2\}^n$ that contain exactly $t$ elements of $F$, and set $L_t:=\sum_{\psi\in\{1,2\}^n}L_t^\psi$. Then, for every $\psi$,

  $$|F| = L^\psi_1 + 2L_2^\psi + 3 L_3^\psi,$$
  $$3^{n-1} = L_0^\psi + L^\psi_1 + L_2^\psi + L_3^\psi,$$
  and 

  \begin{multline*}
  2|E(K_3^{\otimes n}[F])| = \sum_{\psi\in \{1,2\}^n}(L_2^\psi + 3L_3^\psi) = \sum_{\psi\in \{1,2\}^n}(|F| - 3^{n-1} + L_0^\psi + L_3^\psi) =\\= 2^{n}(|F| - 3^{n-1}) + L_0 + L_3 = |F|a_{K_3^\otimes n}(|F|) + L_0 + L_3.
  \end{multline*}

  Hence, if $F$ has maximum degree $a_{K_3^{\otimes n}}(|F|)$, then in particular $L_0 = 0$, i.e., there are no empty lines, meaning that for any $x\in \Z_3^n$ and $\psi\in\{1,2\}^n$ we have $\{x, x+\psi, x+2\psi\} \cap F \neq \emptyset$.
  
  Since $x\mapsto x+e_1$ is an automorphism of $K_3^{\otimes n}$, the translate $F+e_1$ has the same property and maximum degree $d$. Consider $x\in(F+e_1)\setminus F$. Every line through $x$ parallel to a vector in $\{1,2\}^n$ contains a point of $F$, so $\deg_F(x)\geq2^{n-1}$. Since $F+e_1$ has maximum degree $d$, at least $2^{n-1}-d$ of these neighbors lie in $F\setminus(F+e_1)$. Interchanging $F$ and $F+e_1$ gives the same conclusion on the other side. Hence the subgraph induced by $F\mathbin{\triangle}(F+e_1)$ has minimum degree at least $2^{n-1}-d$. Therefore 

  $$(2^{n-1} - d)\gamma \leq \frac{\un_{F\mathbin{\triangle}(F+e_1)}^TB_n\un_{F\mathbin{\triangle}(F+e_1)}}{3^n} \leq 2^n\gamma^2 + 2^{n-2}(\gamma-\gamma^2).$$

  If $\gamma > 0$, this implies $\gamma \geq \frac{1}{3} - \frac{4}{3}\varepsilon$, hence $54\varepsilon^2 + \frac{4}{3}\varepsilon\geq \frac{1}{3} - \frac{4}{3}\varepsilon$ and thus $\varepsilon > \frac{1}{18}$. 

  Thus, if $\varepsilon \leq \frac{1}{18}$, then $\gamma = 0$ and so $F = F+e_1$ and $f(x) = f(x-e_1)$ for all $x\in \Z_3^n$, and by exactly the same argument this holds for any $e_i$ instead of $e_1$. But the only such $f$'s are constants, contradiction. 

  $\varepsilon > \frac{1}{18}$ implies (for families with minimum possible average degree $d$) that $\alpha > \frac{6}{17}$.
  
\end{proof}

\section{Upper bounds on maximum degree for the Hamming product}\label{sec:hamming}

In this section, we present families, parametrized by $n$, of vertex subsets of $K_3^{\Box n}$ with corresponding induced subgraphs having small maximum degree. In particular, we prove Theorem~\ref{thm-ex-hamming}. We also give transformations that produce new subsets of reasonably small maximum degree from existing constructions and from large induced subgraphs of the hypercube with small maximum degree. 

Although the normalized minimum average degrees of the tensor and Hamming products coincide, their minimum possible maximum degrees behave quite differently. In particular, there is no jump at zero for the Hamming product, as noted in the introduction. As we show below, for many densities $\alpha=m/3^n$, the minimum possible maximum degree for the Hamming product is asymptotically close to, and sometimes equal to, the minimum possible average degree. This contrasts with the tensor-product behavior established in the previous section. Throughout this section, a term such as $o_\ell(1)$ tends to zero as $n\to\infty$ with $\ell$ fixed.

\begin{Definition}
  For any $\alpha\in[0,1]$, define $$\bar \Delta_{\Box, n}(\alpha) := \frac{\min_{F\subset V(K_3^{\Box n}) : |F| \geq \alpha\cdot 3^n}\Delta(K_3^{\Box n}[F])}{2n};$$ $$\bar \Delta_{\Box}(\alpha) := \inf_{n}\bar\Delta_{\Box, n}(\alpha);$$ $$\bar \Delta_{\Box}^*(\alpha) := \sup_{\alpha' <\alpha}\bar\Delta_{\Box}(\alpha').$$ 
  
  Denote $$\bar a_\Box(\alpha) = \begin{cases}
  0,& \alpha\leq\frac{1}{3}\\
  1 - \frac{1}{3\alpha},& \frac{1}{3} \leq \alpha \leq 
  \frac{2}{3}\\
  2 - \frac{1}{\alpha}, & \alpha \geq \frac{2}{3}.
  \end{cases}$$
  Section~\ref{sec-average} shows that, for every $n\in\N$,
  $$\min_{\substack{F\subseteq V(K_3^{\Box n})\\ |F|\geq \alpha3^n}}\frac{a(K_3^{\Box n}[F])}{2n}
  =\bar a_\Box\!\left(\frac{\lceil\alpha3^n\rceil}{3^n}\right).$$
  Consequently,
  $$\bar a_\Box(\alpha)=\inf_{n\geq1}\min_{\substack{F\subseteq V(K_3^{\Box n})\\ |F|\geq \alpha3^n}}\frac{a(K_3^{\Box n}[F])}{2n}.$$

  Clearly, $\bar a_\Box(\alpha) \leq \bar\Delta^*_\Box(\alpha) \leq \bar\Delta_\Box(\alpha)$.
\end{Definition}

In this section, in particular, we obtain an upper bound on $\bar\Delta_\Box(\alpha)$, depicted in orange in Figure~\ref{fig:hamming-bounds}. The bound follows from the constructions in Theorem~\ref{thm-ex-hamming} and the elementary claim below. The blue line is the average degree lower bound.

\begin{Claim}\label{cl:easy-pb-constr-Hamming}
  Suppose that $\Delta_{K_3^{\Box n}}(m)=d$ with $d\geq2$. Then, for every $p\in(0,1)$,
  $$\Delta_{K_3^{\Box n}}\!\left(\left\lceil pm\left(1-\frac{1}{d^2}\right)\right\rceil\right)\leq pd+\sqrt{d\ln d}.$$
\end{Claim}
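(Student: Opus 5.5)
The plan is to repeat the random thinning argument of Claim~\ref{cl:easy-prob-constr} verbatim, with $K_3^{\Box n}$ in place of $K_3^{\otimes n}$. That argument only used the fact that $F$ induces a subgraph of maximum degree $d$; it never used the structure of the ambient graph. First, we fix $F\subseteq V(K_3^{\Box n})$ with $|F|=m$ and $\Delta(K_3^{\Box n}[F])=d$. Next, we let $F_p$ contain each vertex of $F$ independently with probability $p$. We then define the set of bad vertices
$$B:=\{x\in F_p:\deg_{F_p}(x)>pd+\sqrt{d\ln d}\}.$$

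The key step is a per-vertex tail bound. Condition on $x\in F_p$. Then $\deg_{F_p}(x)$ is a sum of $\deg_F(x)\le d$ independent Bernoulli($p$) variables, with mean $p\deg_F(x)\le pd$. If $\deg_F(x)=0$, then $x$ cannot be bad, so we may assume $\deg_F(x)>0$. Hoeffding's inequality with deviation $t=\sqrt{d\ln d}$ gives
$$\Pp[x\in B]\le p\exp\bigl(-2t^2/\deg_F(x)\bigr)\le p\exp(-2\ln d)=p/d^2.$$
Summing over $x\in F$ and using linearity of expectation yields
$$\mathbb{E}\bigl(|F_p|-|B|\bigr)\ge pm\bigl(1-d^{-2}\bigr).$$

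To finish, we choose a realization in which $|F_p\setminus B|$ is at least this expectation. Since the size is an integer, it is at least $\lceil pm(1-d^{-2})\rceil$. Deleting vertices can only lower degrees, and every vertex of $F_p\setminus B$ already had degree at most $pd+\sqrt{d\ln d}$ in $F_p$. So its degree in the induced subgraph on $F_p\setminus B$ is also at most $pd+\sqrt{d\ln d}$. Passing to any subset of exactly the required size keeps this bound, which gives the stated inequality for $\Delta_{K_3^{\Box n}}$.

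There is no real obstacle here. The only point needing care is that removing $B$ does not raise the degrees of the surviving vertices. This holds because we measure degrees in $F_p$ before deleting $B$, and deletion only decreases them. The hypothesis $d\ge2$ ensures $\ln d>0$, so the deviation $t$ is meaningful.
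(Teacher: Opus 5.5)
Your proof is correct and is essentially the paper's argument. The paper proves the tensor-product version (Claim~\ref{cl:easy-prob-constr}) by the same $p$-random thinning, Hoeffding bound with $t=\sqrt{d\ln d}$, and deletion of the bad set; it then notes that the argument uses only the maximum degree of $F$, so it applies verbatim to the Hamming product.
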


\begin{figure}[h!]
\includegraphics[width=\textwidth]{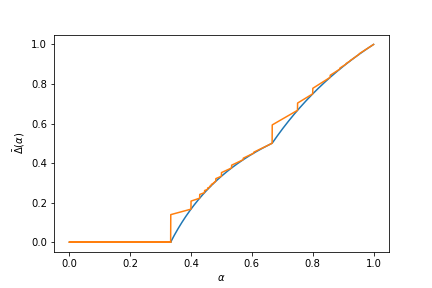}
\caption{Upper and lower bounds on $\inf_{n,\,m\geq \alpha 3^n} \frac{\Delta_{K_3^{\Box n}}(m)}{2n}$ as a function of $\alpha$.}
\label{fig:hamming-bounds}
\end{figure}

\subsection{Three simple asymptotically optimal examples}

\begin{Statement}
  $\bar\Delta_\Box^*(\frac{2}{5}) = \bar a_\Box(\frac{2}{5}) = \frac{1}{6}$, $\bar\Delta_\Box^*(\frac{1}{2}) = \bar a_\Box(\frac{1}{2}) = \frac{1}{3}$ and $\bar\Delta_\Box^*(\frac{3}{4}) = \bar a_\Box(\frac{3}{4}) = \frac{2}{3}$.
\end{Statement}

\begin{proof}
  It is enough to find for each $\alpha = \frac{2}{5}, \frac{1}{2}, \frac{3}{4}$ a sequence of integers $n_i \to \infty$ and a vertex subset $F_{n_i} \subset \Z_3^{n_i}$ of size $m_{n_i} \geq (1-o(1))\alpha\cdot 3^{n_i}$ such that $\Delta(K_3^{\Box n_i}[F_{n_i}]) \leq (1+o(1)) \bar a_{\Box}(\alpha) \cdot 2n_i$. To simplify notation, we henceforth write $n$ in place of $n_i$. 

\vspace{3mm}

  \textbf{An example for $m \geq (\frac{1}{2} - o(1))\cdot 3^n$ and $\Delta \leq (\frac{1}{3}+o(1))\cdot 2n$:}
  
  The Chernoff-Hoeffding bound implies that for uniformly random $x \in \{0,1,2\}^n$ for each $\ell = 0, 1, 2$ we have

$$\Pp[\#\{i \in [n] : x_i = \ell\} > \frac{n}{3} + n^{\frac{3}{4}}] \leq e^{-2\sqrt{n}}.$$

Note that $\sum_{i=1}^n x_i\equiv\#\{i\in[n]:x_i=1\}\pmod 2$. Thus $\#\{x : \sum_{i=1}^n x_i \equiv 0\,\, mod\,\,2\} - \#\{x : \sum_{i=1}^n x_i \equiv 1\,\, mod\,\,2\} = \sum_{\ell=0}^n(-1)^\ell2^{n-\ell}\binom{n}{\ell}=(2-1)^n=1$.

Let $$F := \{x : \#\{i \in [n] : x_i = \ell\} \leq \frac{n}{3} + n^{\frac{3}{4}}, \ell = 0,2; \sum_{i=1}^n x_i \equiv 0 \mod 2\}.$$
By the preceding estimates, $|F| \geq (\frac{1}{2} - 3e^{-2\sqrt{n}})\cdot 3^n$ for $n \geq 4$.

On the other hand, to change one coordinate but not the parity of the sum of coordinates we need to either replace 0 by 2 or vice versa. Thus $\Delta(K_3^{\Box n}[F]) \leq \frac{2}{3}n + 2n^{\frac{3}{4}} = (\frac{1}{3} + n^{-\frac{1}{4}})\cdot 2n$.

Note that $\bar a_{\Box}(\frac{1}{2}) = 1 - \frac{1}{3/2} = \frac{1}{3}$.

\vspace{3mm}

\textbf{An example for $m \geq (\frac{3}{4} - o(1))\cdot 3^n$ and $\Delta \leq (\frac{2}{3}+o(1))\cdot 2n$:}

Clearly, there exists $\ell \in \{0,1,2,3\}$, such that $\#\{x : \sum_{i=1}^n x_i \equiv \ell \mod 4\} \leq \frac{3^n}{4}$. Let $$F := \{x : \#\{i \in [n] : x_i = t\} \leq \frac{n}{3} + n^{\frac{3}{4}}, t = 0, 1, 2; \sum_{i=1}^n x_i \not\equiv \ell \mod 4\}.$$ Then $|F| \geq (\frac{3}{4} - 3e^{-2\sqrt{n}})\cdot 3^n$. On the other hand, if the sum of coordinates of $x$ is $\ell+2 \mod 4$, then if we replace 0 by 2 or 2 by 0, we will get residue $\ell$. If the sum is $\ell+1 \mod 4$, we will get sum $\ell$ by changing 1 to 0 and 2 to 1. Finally, if the sum is $\ell+3 \mod 4$, we will get sum $\ell$ by changing 0 to 1 and 1 to 2. Hence, $\Delta(K_3^{\Box n}[F]) \leq \frac{4}{3}n + 4n^{\frac{3}{4}} = (\frac{2}{3} + 2n^{-\frac{1}{4}})\cdot 2n$.

Note that $\bar a_{\Box}(\frac{3}{4}) = 2 - \frac{1}{3/4} = \frac{2}{3}$.

\vspace{3mm}

\textbf{An example for $m \geq (\frac{2}{5} - o(1))\cdot 3^n$ and $\Delta \leq (\frac{1}{6}+o(1))\cdot 2n$:}

Since $3^n = \sum_{\ell = 0}^4 \#\{x : \sum x_i \equiv \ell \mod 5\} = \frac{1}{2}\sum_{\ell = 0}^4 (\#\{x : \sum x_i \equiv \ell \mod 5\} + \#\{x : \sum x_i \equiv \ell+2 \mod 5\})$, there exists an $\ell$ such that $\#\{x : \sum x_i \equiv \ell \mod 5\} + \#\{x : \sum x_i \equiv \ell+2 \mod 5\} \geq \frac{2}{5}\cdot 3^n$. Let $$F := \{x : \#\{i \in [n] : x_i = t\} \leq \frac{n}{3} + n^{\frac{3}{4}}, t = 0, 1, 2; \sum_{i=1}^n x_i \equiv \ell \,\, or \sum_{i=1}^n x_i \equiv \ell + 2 \mod 5\}.$$ Then $|F| \geq (\frac{2}{5} - 3 e^{-2\sqrt{n}}) \cdot 3^n$ and $\Delta(K_3^{\Box n}[F]) \leq \frac{n}{3} + n^{\frac{3}{4}} = (\frac{1}{6} + \frac{1}{2}n^{-\frac{1}{4}})\cdot 2n$.

Note that $\bar a_\Box(2/5)=1-1/(3\cdot 2/5)=1/6$.
  
\end{proof}

\subsection{Infinite sequences of asymptotically optimal examples}

\begin{Statement}\label{st-sequence1}
  For each integer $\ell \geq 1$ 

  $$\bar \Delta_\Box^*\bigg(1 - \frac{1}{2\ell+1}\bigg) = \bar a_\Box\bigg(1 - \frac{1}{2\ell+1}\bigg),$$
  
  that is, there is a sequence $m_n^{(\ell)} = (1 - \frac{1}{2\ell+1})\cdot 3^n \cdot (1 - o(1))$ (with $n\to \infty$) such that
  $$\Delta_{K_3^{\Box n}}(m_n^{(\ell)}) = \bar a_{\Box}\bigg(1 - \frac{1}{2\ell+1}\bigg)\cdot 2n(1 + o(1)).$$
\end{Statement}

\begin{proof}
  Suppose that $n$ is divisible by $2\ell$. Let $F_R\subseteq\{0,1,2\}^n$ be the set of sequences $x$ such that for each $t = 0,1,2$ and each $q = 1,\ldots,2\ell$ it holds that $\#\{i\in [\frac{(q-1)n}{2\ell}+1, \frac{qn}{2\ell}] : x_i = t\} = (1 + o_{\ell}(1))\frac{n}{6\ell}$ and $$\sum_{q=1}^{2\ell} \sum_{i = \frac{(q-1)n}{2\ell}+1}^{\frac{qn}{2\ell}} q\cdot x_i \not\equiv R \mod 2\ell + 1.$$ 

  By the Chernoff-Hoeffding bound and an averaging argument $|F_R| \geq (1-o_\ell(1))(1 -\frac{1}{2\ell+1})\cdot 3^n$ for at least one $R$. Here and below, the tolerance implicit in $o_\ell(1)$ is chosen to tend to zero slowly enough for the Chernoff--Hoeffding estimate to apply.

  Since $2$ is invertible modulo $2\ell + 1$, for each $0 \neq r \in \Z_{2\ell + 1}$ there are the following ways to change the residue of the weighted sum by $r$ by changing one coordinate of $x$:
  \begin{itemize}
  \item change $0$ to $1$ or $1$ to $2$ on the interval $[\frac{(q-1)n}{2\ell}+1, \frac{qn}{2\ell}]$ for the unique $q \equiv r \mod 2\ell + 1$;
  \item change $1$ to $0$ or $2$ to $1$ on the interval $[\frac{(q-1)n}{2\ell}+1, \frac{qn}{2\ell}]$ for the unique $q \equiv -r \mod 2\ell + 1$;
  \item change $0$ to $2$ on the interval $[\frac{(q-1)n}{2\ell}+1, \frac{qn}{2\ell}]$ for the unique $q$ such that $2q \equiv r \mod 2\ell + 1$;
  \item change $2$ to $0$ on the interval $[\frac{(q-1)n}{2\ell}+1, \frac{qn}{2\ell}]$ for the unique $q$ such that $-2q \equiv r \mod 2\ell + 1$.
  \end{itemize}
  Since exactly one non-zero change of the residue sends the residue of $x\in F_R$ to $R$, 
  $$\Delta(K_3^{\Box n}[F_R]) \leq 2n - (\frac{n}{3\ell} + \frac{n}{3\ell} + \frac{n}{6\ell} + \frac{n}{6\ell})(1 + o_\ell(1)) = 2n(1 - \frac{1}{2\ell})(1 + o_{\ell}(1)).$$
  On the other hand, $$\bar a_{\Box}(1 - \frac{1}{2\ell+1}) = 2 - \frac{1}{1 - \frac{1}{2\ell+1}} = 1 - \frac{1}{2\ell}.$$
\end{proof}

\begin{Statement}
  For each integer $\ell \geq 1$ 
  $$\bar \Delta^*_\Box\bigg(\frac{2(2^\ell - 1)}{4(2^\ell-1) + 1}\bigg) = \bar a_\Box\bigg(\frac{2(2^\ell - 1)}{4(2^\ell-1) + 1}\bigg)$$
  that is, there is a sequence $m_n^{(\ell)} = \frac{2(2^\ell - 1)}{4(2^\ell-1) + 1}\cdot 3^n \cdot (1 - o_\ell(1))$ (with $n\to \infty$) such that
  $$\Delta_{K_3^{\Box n}}(m_n^{(\ell)}) = \bar a_\Box\bigg(\frac{2(2^\ell - 1)}{4(2^\ell-1) + 1}\bigg)\cdot 2n(1 + o_\ell(1)).$$
\end{Statement}

\begin{proof}
  Set $s:=2^\ell-1$ and suppose that $n$ is divisible by $s$. For a residue $R$ modulo $4s+1$, let $F_R\subseteq\{0,1,2\}^n$ be the set of sequences such that $$\sum_{q = 0}^{\ell-1} (2^{\ell - q}-1)\sum_{i=\frac{(2^{q}-1)n}{2^\ell-1}+1}^{\frac{(2^{q+1}-1)n}{2^\ell - 1}}x_i \in \{R+1,R+3,\ldots,R+(4s-1)\}$$ and for each $t = 0,1,2$ and each $q = 0,\ldots,\ell-1$ it holds that $$|\{i \in [\frac{(2^{q}-1)n}{2^\ell-1}+1, \frac{(2^{q+1}-1)n}{2^\ell - 1}] : x_i = t\}| = (1+o_\ell(1))\frac{2^q n}{3(2^\ell - 1)}.$$ By the Chernoff-Hoeffding bound and an averaging argument, $|F_R|\geq (1-o_\ell(1))\frac{2s}{4s+1}\cdot 3^{n}$ for at least one residue $R$. Let us call $[\frac{(2^{q}-1)n}{2^\ell-1}+1, \frac{(2^{q+1}-1)n}{2^\ell - 1}]$ the $q$-th interval. For each $p = 2,3,\ldots,\ell$ the residue of the weighted sum can increase (decrease) by $2^{p}-2$ by increasing (decreasing) one coordinate in ($\ell - p + 1$)-th interval by $2$ (each in $\frac{2^{\ell - p + 1}n}{3(2^\ell -1)}(1+o_\ell(1))$ ways), and it can increase (decrease) by $2^{p}-1$ by increasing (decreasing) one coordinate in the ($\ell-p$)-th interval by $1$ (each in $2\cdot \frac{2^{\ell - p}n}{3(2^\ell -1)}(1+o_\ell(1)) = \frac{2^{\ell - p+1}n}{3(2^\ell -1)}(1+o_\ell(1))$ ways). Also, it can increase (decrease) by $1$ by increasing (decreasing) one coordinate in the $(\ell-1)$-th interval by $1$ (in $2\cdot\frac{2^{\ell-1}n}{3(2^\ell-1)}(1+o_\ell(1))$ ways) and increase (decrease) by $2^{\ell+1}-2 = 2s$ by increasing (decreasing) one coordinate in the $0$-th interval by $2$ (in $\frac{n}{3(2^\ell-1)}(1+o_\ell(1))$ ways).

  For each $r\in\mathcal R:=\{R+1,R+3,\ldots,R+(4s-1)\}$ and each $p=2,3,\ldots,\ell$, exactly one of $r+2^p-2$ and $r+2^p-1$ lies in $\mathcal R$, and exactly one of $r-(2^p-2)$ and $r-(2^p-1)$ lies in $\mathcal R$. In addition, exactly one of $r+(2^{\ell+1}-2)$ and $r-(2^{\ell+1}-2)$ lies in $\mathcal R$, whereas $r\pm1$ never does. Thus every vertex of $K_3^{\Box n}[F_R]$ has degree at most
  $$2\cdot \sum_{p=2}^{\ell}\frac{2^{\ell - p+1}n}{3(2^\ell -1)}(1+o_\ell(1)) + \frac{n}{3(2^{\ell}-1)}(1+o_\ell(1)) = \frac{2^{\ell+1}-3}{6(2^\ell - 1)}\cdot 2n(1+o_\ell(1)).$$
  On the other hand, since $\frac{2(2^\ell - 1)}{4(2^\ell-1) + 1} < \frac{1}{2} <\frac{2}{3}$,
  $$\bar a_\Box(\frac{2(2^\ell - 1)}{4(2^\ell-1) + 1}) = 1 - \frac{4(2^\ell-1)+1}{6(2^\ell - 1)} = \frac{2^{\ell+1}-3}{6(2^\ell - 1)}.$$
\end{proof}

\begin{Statement}
  For each integer $\ell \geq 1$ 

  $$\bar \Delta^*_\Box\bigg(\frac{2\ell + 1}{4\ell + 3}\bigg) = \bar a_\Box\bigg(\frac{2\ell + 1}{4\ell + 3}\bigg),$$
  
  that is, there is a sequence $m_n^{(\ell)} = \frac{2\ell + 1}{4\ell + 3}\cdot 3^n \cdot (1 - o_\ell(1))$ such that
  $$\Delta_{K_3^{\Box n}}(m_n^{(\ell)}) = \bar a_{\Box}\bigg(\frac{2\ell+1}{4\ell + 3}\bigg)\cdot 2n (1 + o_\ell(1)).$$
\end{Statement}

\begin{proof}
  For each $t=1,2,\ldots,\ell$, define $\phi_t: \{0,1,2\} \to \Z_{4\ell+3}$: $\phi_t(0) = 0$, $\phi_t(1) = 1$, $\phi_t(2) = 2t$. 
  
  Suppose $n$ is divisible by $2\ell + 1$. Let $F_R \subset \{0,1,2\}^n$ be the family of sequences $x$ such that $$\sum_{t = 1}^{\ell}\sum_{i=\frac{2(t-1)n}{2\ell + 1}+1}^{\frac{2t n}{2\ell + 1}}\phi_t(x_i) + \sum_{i = \frac{2\ell n}{2\ell + 1}+1}^n (2\ell + 1)\cdot x_i \mod 4\ell + 3 \in \{R+1,R+3,\ldots,R+4\ell+1\}$$ and $$\#\{i\in [\frac{2(t-1)n}{2\ell + 1}+1, \frac{2t n}{2\ell + 1}]: x_i = a\} = (1+o_\ell(1))\frac{2n}{3(2\ell+1)}$$ for each $t\in \{1,2,\ldots,\ell\}$ and $a \in \{0,1,2\}$ and also $$\#\{i\in [\frac{2\ell n}{2\ell + 1}+1, n]: x_i = a\} = (1+o_\ell(1))\frac{n}{3(2\ell+1)}.$$ By an averaging argument, and the Chernoff-Hoeffding bound for at least one residue $R$ we have $|F_R| \geq \frac{2\ell + 1}{4\ell + 3}\cdot 3^n \cdot (1 - o_\ell(1))$. 

  Let us call $[\frac{2(t-1)n}{2\ell + 1}+1, \frac{2t n}{2\ell + 1}]$ the $t$-th interval and $[\frac{2\ell n}{2\ell + 1}+1, n]$ the last interval. 

  The possible changes $r$ of the residue of the sum by changing one coordinate are the following:
  \begin{itemize}
  \item $r = 2t$ for $t = 1,\ldots,\ell$: $0 \to 2$ on the $t$-th interval, $\frac{2n}{3(2\ell +1)}(1 + o_\ell(1))$ ways;
  \item $r = 2t + 1$ for $t = 1,\ldots,\ell-1$: $1 \to 2$ on the $(t + 1)$-th interval, $\frac{2n}{3(2\ell +1)}(1 + o_\ell(1))$ ways;
  \item $r = 2\ell +1$: $0 \to 1$ or $1 \to 2$ on the last interval, $\frac{2n}{3(2\ell +1)}(1 + o_\ell(1))$ ways;
  \item $r = -2t$ for $t = 1,\ldots,\ell$: $2 \to 0$ on the $t$-th interval, $\frac{2n}{3(2\ell +1)}(1 + o_\ell(1))$ ways;
  \item $r = -(2t + 1)$ for $t = 1,\ldots,\ell-1$: $2 \to 1$ on the $(t + 1)$-th interval, $\frac{2n}{3(2\ell +1)}(1 + o_\ell(1))$ ways;
  \item $r = -(2\ell +1)$: $1 \to 0$ or $2 \to 1$ on the last interval, $\frac{2n}{3(2\ell +1)}(1 + o_\ell(1))$ ways;
  \item $\pm1$ for all other replacements.
  \end{itemize}

  The residue set $\mathcal R:=\{R+1,R+3,\ldots,R+4\ell+1\}$ contains no two consecutive residues. Therefore, for each $t$, changes by $2t$ and $2t+1$ cannot both send a given residue into $\mathcal R$; the same holds for $-2t$ and $-(2t+1)$. Moreover, a change by $\pm1$ never sends a residue of $\mathcal R$ back into $\mathcal R$. Consequently,
  $$\Delta(K_3^{\Box n}[F_R]) \leq 2\ell \cdot \frac{2n}{3(2\ell+1)}(1 + o_\ell(1)) = \frac{2\ell }{3(2\ell + 1)}\cdot 2n(1 + o_\ell(1)).$$

  On the other hand, since $\frac{2\ell+1}{4\ell + 3} < \frac{1}{2} < \frac{2}{3}$,
  $$\bar a_\Box\bigg(\frac{2\ell+1}{4\ell + 3}\bigg) = 1 - \frac{4\ell +3}{3(2\ell + 1)} = \frac{2\ell}{3(2\ell+1)}.$$
\end{proof}

\subsection{The transformation $\alpha \to \frac{2}{3}\alpha$}\label{sec:transform23hamming}

\begin{Claim}\label{claim-mult-23}
  For each subset $F\subset \Z_3^{n}$ there exists a set $F' \subset \Z_3^{2n}$ such that $|F'| \geq \frac{2}{3}\cdot |F|\cdot 3^n$ and $\Delta(K_3^{\Box 2n}[F']) = \Delta(K_3^{\Box n}[F])$.
\end{Claim}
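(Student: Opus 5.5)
The plan is to build $F'$ by ``doubling'' each coordinate. Write a vertex of $K_3^{\Box 2n}$ as a pair $(x,y)$ with $x,y\in\Z_3^n$. I will use two linear maps $\Z_3^{2n}\to\Z_3^n$ and $\Z_3^{2n}\to\Z_3$:
$$\varphi(x,y):=x+y\in\Z_3^n \qquad\text{(coordinatewise)},\qquad g(x,y):=\sum_{i=1}^n(x_i-y_i)\in\Z_3,$$
and set
$$F':=\{(x,y)\in\Z_3^{2n}:\ \varphi(x,y)\in F,\ g(x,y)\neq 0\}.$$
The first condition copies $F$, and the second condition thins the copy by a factor $2/3$ so that no degree is doubled.

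\textbf{Size.} The linear map $(x,y)\mapsto(x+y,\,x-y)$ is invertible over $\Z_3$, because $2$ is invertible mod $3$. Hence $(\varphi,g)$ is a surjective linear map $\Z_3^{2n}\to\Z_3^n\times\Z_3$, since $g$ is the sum of the coordinates of $x-y$. Every fibre of $(\varphi,g)$ therefore has size $3^{2n}/3^{n+1}=3^{n-1}$. Counting the fibres over $F\times\{1,2\}$ gives
$$|F'|=2|F|\cdot3^{n-1}=\tfrac23|F|\,3^n.$$

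\textbf{Degree.} Fix $v=(x,y)\in F'$ and put $u:=\varphi(v)\in F$. A Hamming neighbour of $v$ changes exactly one coordinate: either $x_i\mapsto x_i+\delta$ or $y_i\mapsto y_i+\delta$, with $i\in[n]$ and $\delta\in\{1,2\}$. Both moves send $\varphi(v)$ to the same point $u+\delta e_i$, which is a Hamming neighbour of $u$. Conversely, every neighbour $u+\delta e_i$ of $u$ arises from exactly these two moves.

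The two moves change $g$ differently: the first gives $g(v)+\delta$ and the second gives $g(v)-\delta$. Since $g(v)\in\{1,2\}$ and $\delta\in\{1,2\}$, we have $g(v)\pm\delta\in\{g(v)+1,g(v)+2\}$. These are the two residues different from $g(v)$, and exactly one of them is $0$. So exactly one of the two moves stays in $\{g\neq0\}$.

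It follows that the neighbours of $v$ inside $F'$ are in bijection with the neighbours of $u$ inside $F$. Thus $\deg_{F'}(v)=\deg_F(\varphi(v))$ for every $v\in F'$. Because $\varphi(F')=F$ (every fibre is nonempty), we get $\Delta(K_3^{\Box 2n}[F'])=\Delta(K_3^{\Box n}[F])$.

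The only real obstacle is finding a thinning condition that kills exactly one of the two ``parallel'' lifts of each edge of $K_3^{\Box n}[F]$. The antisymmetric form $x-y$, paired with the symmetric form $x+y$, does this because the two lifts shift $g$ by $+\delta$ and $-\delta$. After that, the remaining checks are only the independence of $\varphi$ and $g$ and the small case analysis over $g(v),\delta\in\{1,2\}$.
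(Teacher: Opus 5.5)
Your proposal is correct and is essentially the paper's construction. Substituting $y=-x'$ turns your $\varphi(x,y)=x+y$ and $g(x,y)=\sum_i(x_i-y_i)$ into the paper's $x-x'$ and total coordinate sum $\sum_i x_i+\sum_i x'_i$, and your observation that the two parallel lifts of an edge shift $g$ by $+\delta$ and $-\delta$ is exactly the paper's split into $\deg_{+}$ and $\deg_{-}$; the only difference is that you count the fibres of $(\varphi,g)$ to get $|F'|=\frac23|F|3^n$ exactly for the excluded residue $0$, whereas the paper picks the excluded residue $t$ by averaging.
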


\begin{proof}
  Define $F''\subseteq\Z_3^{2n}$ by $(x,x')\in F''$ if and only if $x-x'\in F$, where $(x,x')$ denotes concatenation. For $z\in F''$, let $\deg_{+,F''}(z)$ and $\deg_{-,F''}(z)$ count, respectively, the neighbors of $z$ in $F''$ whose coordinate sum is one more or one less modulo~$3$. Then, for $(x,x')\in F''$,
  \begin{multline*}
  \deg_{+,F''}(x,x')
  =\sum_{i=1}^{n}\bigl(\mathbf{1}_{F''}(x+e_i,x')+\mathbf{1}_{F''}(x,x'+e_i)\bigr)\\
  =\sum_{i=1}^{n}\bigl(\mathbf{1}_{F}(x-x'+e_i)+\mathbf{1}_{F}(x-x'-e_i)\bigr)
  =\deg_F(x-x').
  \end{multline*}
  Similarly, $\deg_{-,F''}(x,x')=\deg_F(x-x')$. By averaging, for at least one $t\in\{0,1,2\}$,
  $$\bigl|\{z\in F'':\textstyle\sum_i z_i\not\equiv t\pmod3\}\bigr|
  \geq\frac23|F''|=\frac23|F|3^n.$$
  Let $F'$ be this set. Every edge of the Hamming graph changes the coordinate sum by $1$ or $-1$ modulo~$3$, so each vertex of $F'$ retains exactly one of the two directional degree classes above. Hence $\Delta(K_3^{\Box 2n}[F'])\leq\Delta(K_3^{\Box n}[F])$. Conversely, each $u\in F$ has lifts $(x,x')$ with $x-x'=u$ in every residue class of the total coordinate sum. A lift of a vertex attaining the maximum degree therefore belongs to $F'$, proving equality.
\end{proof}

For $\alpha\geq2/3$, we have $\bar a_{\Box}(2\alpha/3)=\bar a_{\Box}(\alpha)/2$. The claim therefore shows that an asymptotically optimal construction at density $\alpha$ yields one at density $2\alpha/3$.

In particular, from Statement~\ref{st-sequence1} we get 

\begin{Statement}\label{st-sequence1-23}
  For each integer $\ell \geq 1$ 

  $$\bar \Delta_\Box^*\bigg(\frac{2}{3}\bigg(1 - \frac{1}{2\ell+1}\bigg)\bigg) = \bar a_\Box\bigg(\frac{2}{3}\bigg(1 - \frac{1}{2\ell+1}\bigg)\bigg).$$
\end{Statement}

Recall that for the product case analogous transformation also exists and is much simpler, see Claim~\ref{cl:transform23tensor}.

\subsection{Examples stemming from ternary Hamming codes}\label{subsec-hamming-codes}

The following standard construction produces the well-known ternary Hamming codes and their dual simplex codes (see, e.g. \cite{HamCodes}). 

\begin{Statement}
  For every $\ell\geq1$ there exists a subspace $L_\ell$ of $\Z_3^{\frac{3^\ell - 1}{2}}$ of dimension $\ell$ such that for each $x\in L_\ell\backslash \{0\}$ it holds that $|x| = 3^{\ell -1}$. 
\end{Statement}

\begin{proof}
  We prove this by induction.

  Base: $\ell = 1$, $L_\ell = \Z_3$.

  Induction step: let $n=(3^{\ell-1}-1)/2$ and suppose that $L_{\ell-1}\subseteq\Z_3^n$ has the required properties. Define
  $$T:\Z_3^n\longrightarrow\Z_3^{3n+1},\qquad T(x)=(x,x,x,0),$$
  and let
  $$a_\ell=(0^n,1^n,(-1)^n,1).$$
  (Here and below, $c^r$ denotes the string in which the symbol $c$ is repeated $r$ times.) Set
  $$L_\ell:=T(L_{\ell-1})+\langle a_\ell\rangle.$$
  This subspace has dimension $\ell$. If $x\in L_{\ell-1}\setminus\{0\}$, then $T(x)$ has weight $3\cdot3^{\ell-2}=3^{\ell-1}$. For $c\in\{1,2\}$, every coordinate position among the first three blocks contributes exactly two nonzero symbols to $T(x)+ca_\ell$, and the last coordinate is nonzero; hence its weight is $2n+1=3^{\ell-1}$. Therefore every nonzero vector in $L_\ell$ has weight $3^{\ell-1}$.
\end{proof}

\begin{Statement}\label{st-perfect-ex}
  The set $L_\ell^{\perp}$ is a perfect dominating set in $K_3^{\Box N}$, where $N=(3^\ell-1)/2$. Consequently, for $m=3^N-3^{N-\ell}$ we have $\Delta_{K_3^{\Box N}}(m)=a_{K_3^{\Box N}}(m)$.
\end{Statement}

\begin{proof}
Set $N=(3^\ell-1)/2$, and let $A$ be the adjacency matrix of $K_3^{\Box N}$. The characteristic function $\chi$ of $L_\ell^{\perp}$ has Fourier decomposition $\chi = \frac{1}{3^{\ell}}\sum_{y\in L_\ell} u_y$. Consequently, the vector of the numbers of neighbours in $L_\ell^\perp$ is $$A\chi=\frac{1}{3^{\ell}}(2N\cdot u_0 + (2N-3\cdot3^{\ell-1})\sum_{y \in L_\ell\backslash \{0\}}u_y) = \frac{3^\ell - 1}{3^\ell}u_0 - \frac{1}{3^\ell}\sum_{y \in L_\ell\backslash \{0\}}u_y = u_0 - \chi,$$ which implies that $L_\ell^{\perp}$ is a perfect dominating set. For $F:=\Z_3^N\setminus L_\ell^{\perp}$, we therefore have $\Delta(K_3^{\Box N}[F])=2N-1=3^\ell-2$, while $a_{K_3^{\Box N}}(|F|)=(2-\frac{3^N}{3^N-3^{N-\ell}})(3^\ell-1) = (2 - \frac{1}{1-3^{-\ell}})\cdot(3^\ell - 1) = 3^\ell - 2$. 
\end{proof}

Claim~\ref{claim-mult-23} produces a set of at least
$$\frac{2}{3}\bigl(3^{3^\ell-1}-3^{3^\ell-1-\ell}\bigr)$$
vertices in dimension $3^\ell-1$ with the same maximum degree. Taking a subset of the indicated integer size, if necessary, and comparing with the average-degree lower bound yields equality at that size.

\subsection{Creating other (not necessarily optimal) asymptotic examples from existing ones}

\begin{Statement}
  Suppose that for $F \subset V(K_3^{\Box b})$ we have $|F| = \alpha \cdot 3^b$ and $\Delta(K_3^{\Box b}[F])=d$. Then for each natural number $p \geq 5$ coprime to $6$ 
  $$\bar\Delta_\Box^*\bigg(\frac{p-1}{p}\alpha\bigg) \leq \frac{p-2}{p-1}\frac{d}{2b}.$$
\end{Statement}

\begin{proof}
  Enumerate (arbitrarily) all $2(p-1)$ residues modulo $3p$ coprime to both $3$ and $p$ as $q_1,\ldots,q_{2(p-1)}$. Let $n$ be divisible by $2(p-1)b$. For $x\in \{0,1,2\}^n$ and $\ell \in [b]$ define $$s_\ell(x) = \sum_{j = 1}^{2(p-1)}q_j\cdot\sum_{i = \frac{n(\ell-1)}{b}+\frac{n(j-1)}{2(p-1)b}+1}^{\frac{n(\ell-1)}{b}+\frac{nj}{2(p-1)b}}x_i.$$ Let us call $[\frac{n(\ell-1)}{b}+1, \frac{n\ell}{b}]$ the $\ell$-th interval and $[\frac{n(\ell-1)}{b}+\frac{n(j-1)}{2(p-1)b}+1, \frac{n(\ell-1)}{b}+\frac{nj}{2(p-1)b}]$ the $j$-th subinterval of it. Let $F'_n$ consist of $x\in \{0, 1, 2\}^n$ such that $$s(x) := (s_1(x)\bmod 3,\ldots,s_b(x)\bmod 3) \in F,\text{ and } \sum_{\ell}s_\ell(x) \not \equiv R \mod p$$ for some residue $R$ and also there are $\frac{n}{6(p-1)b}(1+o(1))$ zeroes, ones and twos on each subinterval. By an averaging argument and the Chernoff-Hoeffding bound, $|F'_n| \geq \frac{p-1}{p}\cdot |F| \cdot 3^{n-b}\cdot(1-o(1)) = \frac{p-1}{p}\cdot \alpha \cdot 3^{n}\cdot(1-o(1))$ for at least one residue $R$.
  
  Consider a change of a coordinate on the $\ell$-th interval for some $x\in F'_n$. In order to give another sequence in $F'_n$ it should be consistent with some edge in $F$. Let us assume, without loss of generality, that this edge was determined by $s(x)\to s(x)+e_\ell$. Then the potential changes of the coordinate were $0 \to 1$, $1 \to 2$ or $2 \to 0$ on the $j$-th subinterval with $q_j \equiv 1\mod 3$ (there are $p-1$ such $j$) or $1\to 0$, $2 \to 1$ or $0 \to 2$ on the $j$-th subinterval with $q_j \equiv 2\mod 3$ (there are $p-1$ such $j$). For each non-zero residue $r$ modulo $p$ there is, by Chinese remainder theorem, exactly one $j$ for each of the following options:
  \begin{itemize}
  \item $q_j \equiv 1\mod 3$ and $q_j \equiv r \mod p$;
  \item $q_j \equiv 1\mod 3$ and $-2q_j \equiv r \mod p$;
  \item $q_j \equiv 2\mod 3$ and $-q_j \equiv r \mod p$;
  \item $q_j \equiv 2\mod 3$ and $2q_j \equiv r \mod p$. 
  \end{itemize}
  This shows that for each way to realize an edge in $F$, we cannot use one of the potential sub-intervals.
  Consequently, the typicality restriction can only decrease the number of available changes, and
  $$\deg_{F'_n}(x)\leq\deg_F(s(x))\cdot6(p-2)\frac{n}{6(p-1)b}(1+o(1)).$$
  Thus
  $$\Delta(K_3^{\Box n}[F_n']) \leq d\cdot (p-2)\frac{n}{(p-1)b}(1+o(1)) = 2n \cdot \frac{p-2}{p-1}\frac{d}{2b}(1+o(1)).$$
  This implies the statement. 
\end{proof}

Taking $b=1$ and $F = \Z_3$ gives Statement~\ref{st-sequence1} with $2\ell + 1$ coprime to 3. It also reproves Statement~\ref{st-sequence1-23} for $2\ell + 1$ coprime to 3 by taking $b = 1$, $F = \{0,1\}$.

\subsection{Examples stemming from the hypercube}

Let $n=kq$ and define $s_i(x):=\sum_{j=(i-1)q+1}^{iq}x_j\pmod 2$ for $i\in[k]$. Suppose that the $k$-dimensional cube has a vertex set $X$ of size $m_c$ whose induced subgraph has maximum degree $a_c$. Let $F\subseteq\{0,1,2\}^n$ consist of the strings $x$ such that $(s_1(x),\ldots,s_k(x))\in X$ and, in each block $x_{(i-1)q+1},\ldots,x_{iq}$, each of the symbols $0,1,2$ occurs at most
$$\frac{n}{3k}+\left(\frac{n}{k}\right)^{3/4}$$
times. By the preceding concentration estimate,
$$|F|\geq m_c\left(\left(\frac12-4e^{-2\sqrt{n/k}}\right)3^{n/k}\right)^k
\geq\frac{m_c}{2^k}3^n\left(1-8ke^{-2\sqrt{n/k}}\right)
=\frac{m_c}{2^k}3^n(1-o(1)).$$

On the other hand, $$\Delta(K_3^{\Box n}[F]) \leq \frac{2n}{3} + 2k\Big(\frac{n}{k}\Big)^\frac{3}{4} + a_c\Big(\frac{4n}{3k} + 4\Big(\frac{n}{k}\Big)^\frac{3}{4}\Big) = 2n\Big(\frac{1}{3}+ \frac{a_c}{k}\cdot\frac{2}{3}\Big)(1+o(1)).$$

Taking, for each $k$, a Huang-tight induced subgraph~\cite{CFGS} $X\subseteq Q_k$ with $|X|=2^{k-1}+1$ and maximum degree $a_c=O(\sqrt{k})$, and then letting $k\to\infty$, gives
$$\bar\Delta_\Box(1/2)=\bar\Delta_\Box^*(1/2)=\bar a_\Box(1/2).$$

\section{Open questions}\label{sec:open-q}

\subsection{The Hamming product}

Because the densities $\alpha$ for which we have proved $\bar\Delta^*_{\Box}(\alpha)=\bar a_{\Box}(\alpha)$ do not appear to share any exceptional feature, it is natural to conjecture that the same equality holds at every density.

\begin{Conjecture}
  $\bar \Delta_{\Box}(\alpha) = \bar a_{\Box}(\alpha)$ for all values $\alpha \in [0,1]$.
\end{Conjecture}

On the other hand, the range $\alpha \geq \frac{2}{3}$ seems similar to the interval $\alpha \geq \frac{1}{2}$ in the case of a hypercube. For instance, in both ranges the sets with minimum possible average degree of the induced subgraph are the complements of the independent sets (see the proof of Claim~\ref{cl1}).
Hence, it may be the case that the Hamming product has a (relatively small) jump at the point $\frac{2}{3}$. One small piece of supporting evidence is that, when $n=2$, every set of more than six vertices induces a subgraph of maximum degree~$4$.

\begin{Conjecture}
  There is a jump at $m=2\cdot3^{n-1}$ that is unbounded as a function of $n$, although it is $o(n)$ in the Hamming-product case.
\end{Conjecture}
Nevertheless, we cannot rule out the possibility that there are no jumps at all.

\subsection{The tensor product}

\begin{Question}
  What is the optimal constant in Theorem~\ref{thm-jump}? What is the optimal constant in Claim~\ref{cl:better-bound-bipart}? 
\end{Question}

All known low-degree examples for the tensor product with $1/3\leq\alpha\leq2/3$ are contained in a canonical bipartite subgraph. Moreover, the lower bounds for that bipartite subgraph are stronger than those for the whole graph. It is therefore natural to ask whether one may always restrict attention to such a subgraph.

\begin{Question}
  For each $3^{n-1}\leq m\leq2\cdot3^{n-1}$, is an optimal tensor-product example contained in a subgraph isomorphic to $\{0,1\}\times\Z_3^{n-1}$? 
\end{Question}

The statements proved in Subsections~\ref{ssec:no-big-perfect} and~\ref{ssec:no-small-perfect} suggest the following hypothesis.

\begin{Conjecture}
  There are only finitely many values $\alpha \in [\frac13, 1]$ such that there exists $n, m\in \Z_{>0}$ with $m = \alpha \cdot 3^{n}$ and $\Delta_{K_3^{\otimes n}}(m) = a_{K_3^{\otimes n}}(m)$.
\end{Conjecture}

\section*{Acknowledgements}
This paper was written as the core of my M.Sc. thesis.
I would like to thank my advisor, Ehud Friedgut, for his invaluable contributions to this thesis, and my parents and friends for their support.

\paragraph{AI usage declaration:} AI was used in this work for editing purposes only.

\section*{Appendices}

\appendix

\section{Analogues for $K_k^{\otimes n}$ and $K_k^{\Box n}$}\label{sec-gen-k}

In this appendix, we present analogues of several results above for an arbitrary integer $k \geq 3$ instead of $3$.

\subsection{Average degree}

\begin{Statement}
  At $\alpha=0$, both normalized quantities below are $0$. For $\alpha>0$, let $q\in\{0,1,\ldots,k-1\}$ be such that $q/k\leq\alpha\leq(q+1)/k$. Then
  $$\frac{a_{K_k^{\Box n}}(\alpha k^n)}{(k-1)n}
  =\frac{a_{K_k^{\otimes n}}(\alpha k^n)}{(k-1)^n}
  =\frac{q}{k-1}\left(2-\frac{q+1}{\alpha k}\right).$$
\end{Statement}

\begin{proof}
  For the lower bound, note that both the Hamming and tensor products contain a spanning subgraph $H$ isomorphic to the disjoint union of $k^{n-1}$ copies of $K_k$. In the Hamming product, these are the edges parallel to $e_1$; in the tensor product, they are the edges parallel to $(1,\ldots,1)^T$. Write $H=\bigsqcup_{j=1}^{k^{n-1}}H_j$, where each $H_j$ is a $k$-clique. 

Let $F\subseteq V(H)$. For $t=0,1,\ldots,k$, let
$$L_t:=\bigl|\{j\in[k^{n-1}]:|V(H_j)\cap F|=t\}\bigr|.$$ Then

$$k^{n-1} = \sum_{t = 0}^k L_t;$$
$$|F| = \sum_{t = 0}^k t \cdot L_t;$$
$$|E(H[F])| = \sum_{t = 0}^k \binom{t}{2}\cdot L_t.$$

For any $q=0,\ldots,k-1$,

$$|E(H[F])| - q|F| + \frac{q(q+1)}{2}\cdot k^{n-1} = \sum_{t = 0}^k L_t \cdot \frac{1}{2}(t^2 - (2q + 1)t + q(q+1)) = \sum_{t = 0}^k L_t \cdot \frac{1}{2}(t - q - 1)(t-q) \geq 0.$$

Consequently, $|E(H[F])| \geq q|F| - \frac{q(q+1)}{2}\cdot k^{n-1}$, and so $a_H(\alpha k^n) \geq q(2 - \frac{1}{\alpha}\frac{q+1}{k})$. Since $H$ is $(k-1)$-regular and both product graphs are edge-transitive, Claim~\ref{cl2} gives the lower bound for both graphs.

For the upper bound, write $\alpha k^n=qk^{n-1}+r$, where $0\leq r\leq k^{n-1}$. Take
  $$F=\bigl(\{0,\ldots,q-1\}\times\Z_k^{n-1}\bigr)\sqcup X\subseteq V(K_k^{\otimes n}),$$
  where the first term is interpreted as empty when $q=0$, and where $X\subseteq\{q\}\times\Z_k^{n-1}$ has size $r$. Then $$|E(K_k^{\otimes n}[F])| = k^{n-1}\cdot \frac{q(q-1)}{2}\cdot (k-1)^{n-1} + rq(k-1)^{n-1},$$
and so
$$\frac{a_{K_k^{\otimes n}}(\alpha\cdot k^n)}{(k-1)^n} \leq \frac{2q}{\alpha k(k-1)}\Big(\frac{q-1}{2} + \frac{r}{k^{n-1}}\Big) = \frac{q}{\alpha k(k-1)}(q-1 + 2(\alpha k - q)) = \frac{q}{k-1}\Big(2 - \frac{1}{\alpha}\frac{q+1}{k}\Big).$$

The analogous construction using the canonical $k$-partition works for the Hamming product.

\end{proof}

\begin{Notation}
  For real $\alpha\in[q/k,(q+1)/k]$ with $\alpha>0$, define $\bar a_k(\alpha):=\frac{q}{k-1}(2-\frac{q+1}{\alpha k})$, and set $\bar a_k(0):=0$. Note that $\bar a_k(\alpha) = \inf_{n, m\geq \alpha k^{n}} \frac{a_{K_k^{\otimes n}}(m)}{(k-1)^n} = \inf_{n, m\geq \alpha k^{n}} \frac{a_{K_k^{\Box n}}(m)}{(k-1) n}$.
\end{Notation}

\begin{Notation}
  Similarly, define $$\bar \Delta_{k,\Box}(\alpha) = \inf_{n, m\geq \alpha k^{n}} \frac{\Delta_{K_k^{\Box n}}(m)}{(k-1) n}, \,\, \bar \Delta_{k,\otimes}(\alpha) = \inf_{n, m\geq \alpha k^{n}} \frac{\Delta_{K_k^{\otimes n}}(m)}{(k-1)^n}.$$
\end{Notation}

\begin{Claim}
  In fact, $$\bar \Delta_{k,\Box}(\alpha) = \lim_{n \to \infty}\min_{m\geq \alpha k^{n}} \frac{\Delta_{K_k^{\Box n}}(m)}{(k-1) n}\text{ and } \bar \Delta_{k,\otimes}(\alpha) = \lim_{n\to\infty}\min_{ m\geq \alpha k^{n}} \frac{\Delta_{K_k^{\otimes n}}(m)}{(k-1)^n}.$$
\end{Claim}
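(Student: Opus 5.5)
The plan is to show that the sequences $g_n(\alpha):=\min_{m\geq \alpha k^n}\Delta_{K_k^{\Box n}}(m)/((k-1)n)$ and $h_n(\alpha):=\min_{m\geq\alpha k^n}\Delta_{K_k^{\otimes n}}(m)/(k-1)^n$ converge, and that the limit equals the infimum over $n$. For this it suffices to prove that, for every fixed $n_0$ and every $\varepsilon>0$, we have $g_n(\alpha)\leq g_{n_0}(\alpha)+\varepsilon$ (respectively $h_n(\alpha)\leq h_{n_0}(\alpha)+\varepsilon$) for all sufficiently large $n$. Then $\limsup_n g_n\leq \inf_{n_0} g_{n_0}\leq\liminf_n g_n$, so the limit exists and equals the infimum.

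For the tensor product I will lift an optimal set by products. Take $F\subseteq \Z_k^{n_0}$ with $|F|\geq \alpha k^{n_0}$ and $\Delta(K_k^{\otimes n_0}[F])=h_{n_0}(\alpha)(k-1)^{n_0}$. For any $t$, the set $F\times\Z_k^{t}$ has density at least $\alpha$. Its maximum degree is exactly $\Delta(F)\,(k-1)^t$, since neighbours factor coordinatewise. Hence $h_{n_0+t}(\alpha)\leq h_{n_0}(\alpha)$ for every $t\geq 0$, which is the generalization of the claim after Theorem~\ref{thm-av} to arbitrary $k$. This already gives $\limsup_n h_n\leq h_{n_0}$ for every $n_0$, so the tensor case is immediate.

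For the Hamming product, padding by free coordinates does not work, because extra coordinates add $(k-1)t$ to the degree. Instead I will use the block-sum lift from the earlier claim, generalized to $\Z_k$. For $n=n_0t+r$ with $0\leq r<n_0$, split the first $n_0t$ coordinates into $n_0$ blocks of length $t$. Define $F_n:=\{x: (\sum_{\text{block }1}x_i,\ldots,\sum_{\text{block }n_0}x_i)\in F\}\times \Z_k^{r}$, with sums taken mod $k$.

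The density of $F_n$ is exactly $|F|/k^{n_0}\geq\alpha$, since the block-sum map is uniformly $k^{n_0t-n_0}$-to-one. Changing one coordinate in block $j$ to another value shifts the $j$-th block sum by a nonzero amount, and each nonzero shift is realised in exactly $t$ ways. So the neighbours inside the first $n_0t$ coordinates number $t\cdot\deg_F(s(x))$. The last $r$ free coordinates contribute at most $(k-1)r$ more.

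This gives $\Delta\leq t\,\Delta(F)+(k-1)r$. Normalizing by $(k-1)n$ yields
$$g_n(\alpha)\leq \frac{t\,(k-1)n_0\, g_{n_0}(\alpha)+(k-1)r}{(k-1)(n_0t+r)}\leq g_{n_0}(\alpha)+\frac{n_0}{n},$$
which tends to $g_{n_0}(\alpha)$ as $n\to\infty$.

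The main point to check carefully is that the block-sum lift preserves degrees for general $k$. Here each nonzero shift $c\in\Z_k\setminus\{0\}$ within a block arises from exactly $t$ coordinate changes: for each position, exactly one new value achieves shift $c$. Also, since $\alpha k^n$ may be non-integral, the condition is "$m\geq\alpha k^n$", and the exact density preservation handles this. With these checks done, the limit statements follow for both products.
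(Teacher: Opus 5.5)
Your proof is correct and follows essentially the same route as the paper. You use the product lift $F\times\Z_k^t$ to get monotonicity for the tensor product, and the block-sum preimage for the Hamming product to show that the $\limsup$ is at most every fixed-dimensional value; the only cosmetic difference is that you treat the remainder $r$ as free padding coordinates (an additive $(k-1)r$ cost), whereas the paper spreads it over blocks of sizes $a$ and $a+1$ (a multiplicative factor $a+1$), and both costs vanish after normalization.
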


\begin{proof}
  For the tensor product, if $F\subseteq V(K_k^{\otimes n})$, then
  $$\Delta\bigl(K_k^{\otimes(n+1)}[F\times\Z_k]\bigr)
  =(k-1)\Delta\bigl(K_k^{\otimes n}[F]\bigr).$$
  Hence the normalized minima form a nonincreasing sequence in $n$, and their limit equals their infimum.

  For the Hamming product, fix $n$ and $F\subseteq V(K_k^{\Box n})$. Given $n'$, write $n'=an+r$ with $0\leq r<n$, and partition $[n']$ into $n$ consecutive blocks $I_1,\ldots,I_n$, of which $r$ have size $a+1$ and the remaining $n-r$ have size $a$. Define
  $$\pi(x):=\left(\sum_{j\in I_1}x_j,\ldots,\sum_{j\in I_n}x_j\right)\in\Z_k^n,$$
  with all sums modulo~$k$, and set $F':=\pi^{-1}(F)$. Then $|F'|=k^{n'-n}|F|$. A change in one coordinate changes exactly one block sum, and each edge of $K_k^{\Box n}[F]$ has at most $a+1$ lifts incident with any fixed vertex. Therefore
  $$\Delta\bigl(K_k^{\Box n'}[F']\bigr)
  \leq(a+1)\Delta\bigl(K_k^{\Box n}[F]\bigr).$$
  After normalization, the right-hand side is $(1+o(1))\Delta(K_k^{\Box n}[F])/((k-1)n)$ as $n'\to\infty$. Thus the limsup is at most every fixed-dimensional normalized value, while the infimum is always at most the liminf. This proves the Hamming-product assertion.
\end{proof}

\subsection{Jumps in $K_k^{\otimes n}$ at points $\alpha = \frac{q}{k}$}

\begin{Claim}
  For each $q=1,2,\ldots,k$,
  $$\Delta_{K_k^{\otimes n}}(q\cdot k^{n-1}) = (q-1)\cdot(k-1)^{n-1},$$
  $$\Delta_{K_k^{\Box n}}(q\cdot k^{n-1}) = (q-1)n.$$
\end{Claim}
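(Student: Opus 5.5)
The plan is to prove matching upper and lower bounds, using the average-degree formula from the preceding Statement for the lower bound and an explicit construction for the upper bound.

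\emph{Upper bound.} Take
$$F=\{0,\ldots,q-1\}\times\Z_k^{n-1},$$
which has $qk^{n-1}$ vertices, in both graphs.
\begin{itemize}
\item In $K_k^{\otimes n}$: a vertex $(a,x)\in F$ is adjacent to $(b,y)$ exactly when $b\neq a$ and $y$ differs from $x$ in every coordinate. Its degree in $F$ is therefore $(q-1)(k-1)^{n-1}$.
\item In $K_k^{\Box n}$: a neighbour of $(a,x)$ inside $F$ either changes the first coordinate to another element of $\{0,\ldots,q-1\}$, giving $q-1$ choices, or changes one of the other $n-1$ coordinates, giving $(n-1)(k-1)$ choices. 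The degree is $(q-1)+(n-1)(k-1)$.
\end{itemize}
This exceeds the claimed value $(q-1)n$ when $k\geq q+1$, so the first-coordinate construction is wrong for the Hamming product and I would replace it.

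The better choice there is a union of $q$ of the $k$ canonical independent sets
$$A_j=\Big\{x:\sum_i x_i\equiv j \pmod k\Big\}.$$
For $F=A_0\cup\cdots\cup A_{q-1}$, a vertex $x\in A_a$ has, in each coordinate, exactly one change that moves the coordinate sum to each residue $b\neq a$. The neighbours inside $F$ therefore number $n(q-1)$, and $|F|=qk^{n-1}$. I would verify that each $A_j$ is independent and has size exactly $k^{n-1}$.

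\emph{Lower bound.} Put $\alpha=q/k$ into the preceding Statement. For $q<k$, the interval $[q/k,(q+1)/k]$ gives
$$\bar a=\frac{q}{k-1}\Big(2-\frac{q+1}{q}\Big)=\frac{q-1}{k-1}.$$
At $\alpha=q/k$ the interval $[(q-1)/k,q/k]$ gives the same value, so the formula is consistent; this also settles $q=k$, where the value is $1$. Hence the minimum average degree equals $(q-1)(k-1)^{n-1}$ in the tensor product and $(q-1)n$ in the Hamming product. Since $\Delta_G(m)\geq a_G(m)$, the matching lower bounds follow immediately.

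There is no real obstacle. The only care needed is in choosing the Hamming construction, namely residue classes of the coordinate sum rather than first-coordinate slices, and in checking the boundary case $q=k$, where $F$ is the whole graph and the degrees are $(k-1)^n$ and $(k-1)n$ respectively.
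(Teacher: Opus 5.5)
Your proof is correct and is essentially the paper's argument. The lower bound is the average-degree formula evaluated at $\alpha=q/k$, where, as you check, both adjacent intervals give $\frac{q-1}{k-1}$. The matching constructions are exactly the paper's: $\{0,\ldots,q-1\}\times\Z_k^{n-1}$ for the tensor product, and the union of $q$ residue classes of the coordinate sum modulo $k$ for the Hamming product.
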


\begin{proof}
  The lower bounds follow from the average-degree formula. Matching examples are
  $$F_\Box:=\left\{x\in\Z_k^n:\sum_i x_i\pmod{k}\in\{0,\ldots,q-1\}\right\},
  \qquad
  F_\otimes:=\{0,\ldots,q-1\}\times\Z_k^{n-1}.$$
\end{proof}

\begin{Statement}
  For every $k\geq3$ there exists $c=c(k)>0$ such that, for each $q=1,\ldots,k-1$ and every $F\subseteq V(K_k^{\otimes n})$ with $|F|>qk^{n-1}$,
  $$\Delta(K_k^{\otimes n}[F])\geq(q-1)(k-1)^{n-1}+c(k-1)^n.$$
\end{Statement}

\begin{proof}

  Suppose that there is a family $F$ of size $q\cdot k^{n-1} + 1$ such that the maximum degree in the induced subgraph is at most $(q-1)\cdot (k-1)^{n-1} + d$. Denote $\alpha := \frac{|F|}{k^n}$, $\varepsilon := \frac{d}{(k-1)^n}$.
  
  The eigenvectors of the adjacency matrix $B_{k,n}$ of $K_{k}^{\otimes n}$ are the Fourier basis of $L_2(\Z_k^n)$, and the eigenvalue for $u_y$ ($u_y(x) = \omega_k^{x\cdot y}$, $\omega_k$ a primitive $k$th root of unity) is $(-1)^{|y|}(k-1)^{n-|y|}$. Let $\un_F = \sum_{y \in \Z_k^n}\hat{f}(y)u_y$ be the Fourier decomposition of the characteristic function of $F$. Define projections $P_\ell(\un_F) = \sum_{|y| = \ell}\hat{f}(y)u_y$ and $P_{\ge 2}(\un_F) = \sum_{|y| \ge 2}\hat{f}(y)u_y$.

  Then $$((q - 1)(k-1)^{n-1}+d)|F| \geq k^n|\hat f(0)|^2\cdot (k-1)^{n} - k^n||P_1(\un_F)||^2\cdot (k-1)^{n-1} - k^n||P_{\geq 2}(\un_F)||^2\cdot (k-1)^{n-3};$$

  $$\Big(\frac{q-1}{k-1} + \varepsilon\Big) \alpha \geq \alpha^2 - \frac{1}{k-1}(\alpha - \alpha^2 - ||P_{\ge 2}(\un_F)||^2) - \frac{1}{(k-1)^3}||P_{\ge 2}(\un_F)||^2;$$

  $$\varepsilon \alpha \geq \frac{\alpha}{k-1}(k\alpha - q) + \frac{k(k-2)}{(k-1)^3}||P_{\ge 2}(\un_F)||^2 \geq \frac{k(k-2)}{(k-1)^3}||P_{\ge 2}(\un_F)||^2,$$

  where the last inequality holds since $\alpha > \frac{q}{k}$.

  By Lemma~\ref{lemm}, there are a constant $K=K(k)$ and a Boolean function $g$ depending on at most one coordinate, which we may take to be the first coordinate, such that
$$
\|\un_F-g\|_2^2
<\frac{K\alpha\varepsilon (k-1)^3/[k(k-2)]}
{\alpha-\alpha^2-\alpha\varepsilon (k-1)^3/[k(k-2)]}
=\frac{K\varepsilon (k-1)^3/[k(k-2)]}
{1-\alpha-\varepsilon (k-1)^3/[k(k-2)]}
\leq C'(k)\varepsilon.
$$
Here $C'(k)>0$ is a constant, provided $n$ is sufficiently large (say, $1-\alpha>1/(2k)$) and $\varepsilon\leq c'(k)$ for a sufficiently small constant $c'(k)>0$.

  Assume without loss of generality that $|F\cap\{x_1=0\}|\geq|F\cap\{x_1=1\}|\geq\cdots\geq|F\cap\{x_1=k-1\}|$. For sufficiently large $n$ we have $\frac{q}{k} < \alpha < \frac{q}{k} + \frac{1}{2k}$, and so if $||g||^2_2 \neq \frac{q}{k}$, then $||\un_F-g||_2^2 \geq \frac{1}{2k}$, and so $\varepsilon \geq \frac{1}{2kC'(k)}$. Among $g$ with $||g||^2_2 = \frac{q}{k}$ the closest to $\un_F$ is clearly $g$ with $g(x_1=0)=\cdots=g(x_1=q-1)=1$ and $g(x_1=q)=\cdots=g(x_1=k-1)=0$, so we can now assume $||\un_F-g||_2^2 \leq C'(k)\varepsilon$ for this particular $g$.

  Let $F' := F\cap \{x_1\in\{0,\ldots,q-1\}\}$ and $F'' := F\cap \{x_1\in\{q,\ldots,k-1\}\}$; $\alpha' = \frac{|F'|}{q\cdot k^{n-1}}$ and $\alpha''=\frac{|F''|}{(k-q)k^{n-1}}$. Delete one element from $F$ so that still $\alpha'' > 0$ and $q\alpha' + (k-q)\alpha'' = q$. Consider the bipartite subgraph $G'$ of $K_k^{\otimes n}$ formed by all edges between $\{x_1\in\{0,\ldots,q-1\}\}$ and $\{x_1\in\{q,\ldots,k-1\}\}$. The matrix of this bipartite graph has singular values $\{\sqrt{q(k-q)}(k-1)^\ell\}_{\ell = 0}^{n-1}$ and the largest has multiplicity $1$. Then by the expander mixing lemma for the bipartite case:

  $$\Big\lvert \frac{e(F', F'')}{|E(G')|} - \alpha'\alpha''\Big\rvert \leq \frac{1}{k-1}\cdot \sqrt{\alpha'\alpha''(1-\alpha')(1-\alpha'')}.$$

  Consequently,

  $$\frac{((q-1)\cdot (k-1)^{n-1} + d)|F''|}{q(k-q)k^{n-1}(k-1)^{n-1}} \geq \alpha'\alpha'' - \frac{1}{k-1}\cdot \sqrt{\alpha'\alpha''(1-\alpha')(1-\alpha'')};$$

  $$\frac{q-1 + (k-1)\varepsilon}{q}\alpha'' \geq \alpha'\alpha'' - \frac{1}{k-1}\cdot \sqrt{\alpha'\alpha''\frac{\alpha''(k-q)}{q}(1-\alpha'')};$$

  $$\frac{q-1 + (k-1)\varepsilon}{q} \geq \alpha' - \frac{1}{k-1}\sqrt\frac{k-q}{q}.$$

  Since $||\un_F-g||_2^2 \leq C'(k)\varepsilon$, $\alpha' \geq 1 - C''(k)\varepsilon$, and so 

  $$\Big(\frac{k-1}{q} + C''(k)\Big)\varepsilon \geq \frac{1}{q} - \frac{1}{k-1}\sqrt\frac{k-q}{q}\geq \frac{1}{q}\Big(1 - \frac{k}{2(k-1)}\Big) > \frac{1}{4k},$$

  and so $\varepsilon \geq c(k)$.
  
\end{proof}

\paragraph{Additional examples for the tensor product}

\begin{Claim}
For each $t=2,3,\ldots,k-1$,
  $$\bar\Delta_{k,\otimes}\bigg(\frac{t\cdot(k-1)}{k^2}\bigg) = \bar a_k\bigg(\frac{t\cdot(k-1)}{k^2}\bigg) = \frac{(t-1)(k-2)}{(k-1)^2}.$$
\end{Claim}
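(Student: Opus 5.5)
The plan is to prove the two inequalities $\bar a_k\le\bar\Delta_{k,\otimes}$ and $\bar\Delta_{k,\otimes}\le\bar a_k$ at the density $\alpha_t:=t(k-1)/k^2$ separately, and then to evaluate $\bar a_k(\alpha_t)$ explicitly.

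\emph{The lower bound $\bar\Delta_{k,\otimes}(\alpha_t)\ge\bar a_k(\alpha_t)$.} Since $\Delta(H)\ge a(H)$ for every graph $H$, this follows from the Notation defining $\bar a_k$ as the infimum of the normalized minimum average degree over $n$ and $m\ge\alpha k^n$. To evaluate it, I would first locate $\alpha_t$. Since $2\le t\le k-1$, we have $t(k-1)/k=t-t/k\in(t-1,t)$. Hence $kq\le k\alpha_t\le k(q+1)$ holds with $q=t-1$. The average-degree Statement in this appendix then gives
$$\bar a_k(\alpha_t)=\frac{t-1}{k-1}\Big(2-\frac{t}{\alpha_t k}\Big)=\frac{t-1}{k-1}\Big(2-\frac{k}{k-1}\Big)=\frac{(t-1)(k-2)}{(k-1)^2}.$$

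\emph{The upper bound.} I would use a two-coordinate construction generalizing $F^1_n=\{0,1\}^2\times\Z_3^{n-2}$ from Claim~\ref{cl:examples-tensor}. Fix $T\subseteq\Z_k$ with $|T|=t$, fix $S:=\Z_k\setminus\{s\}$ for some $s$, and set
$$F_n:=T\times S\times\Z_k^{n-2}.$$
Then $|F_n|=t(k-1)k^{n-2}=\alpha_t k^n$ exactly. Because the tensor product factors coordinatewise, the degree of a vertex of $F_n$ equals its degree in $K_k^{\otimes2}[T\times S]$ multiplied by $(k-1)^{n-2}$; the last $n-2$ coordinates contribute this factor since $K_k$ is $(k-1)$-regular. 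For $(a,b)\in T\times S$, the neighbours are the pairs $(a',b')$ with $a'\in T\setminus\{a\}$ and $b'\in S\setminus\{b\}$. Since $b\in S$, there are exactly $k-2$ choices of $b'$ for each $a'$, so every vertex has degree $(t-1)(k-2)$. Hence
$$\frac{\Delta(K_k^{\otimes n}[F_n])}{(k-1)^n}=\frac{(t-1)(k-2)(k-1)^{n-2}}{(k-1)^n}=\frac{(t-1)(k-2)}{(k-1)^2}.$$
This matches $\bar a_k(\alpha_t)$, so the infimum is attained for every $n\ge2$.

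I do not expect a real obstacle here. The one point requiring care is the choice of the second-coordinate set. A more obvious guess, such as requiring $x_2\neq\pi(x_1)$, produces an extra $+1$ in the degree of some vertices. The structural observation that fixes this is that taking the same $(k-1)$-set $S$ for every $a\in T$ guarantees $b\in S_{a'}$ for all $a'$, which makes the induced subgraph regular. The remaining steps are checking the floor computation $q=t-1$, which uses $2\le t\le k-1$, and the product-degree identity; both are routine.
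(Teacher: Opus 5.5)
Your proof is correct and is essentially the paper's. It uses the same product set $\{0,\ldots,t-1\}\times(\Z_k\setminus\{k-1\})\times\Z_k^{n-2}$ with regular degree $(t-1)(k-2)(k-1)^{n-2}$, the same identification $q=t-1$, and the average-degree lower bound. One small slip: the condition locating $q$ should read $q\le k\alpha_t\le q+1$, not $kq\le k\alpha_t\le k(q+1)$.
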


\begin{proof}
  Let $F := \{0,1,\ldots,t-1\}\times(\Z_k\backslash \{k-1\})\times\Z_k^{n-2}$. Then $$\frac{\Delta(K_{k}^{\otimes n}[F])}{(k-1)^n} = \frac{(t-1)(k-2)(k-1)^{n-2}}{(k-1)^n} = \frac{(t-1)(k-2)}{(k-1)^2}.$$
  On the other hand, since $\frac{(k-1)t}{k^2} = \frac{t - \frac{t}{k}}{k} \in (\frac{t-1}{k}, \frac{t}{k}),$
  $$\bar a_k\bigg(\frac{(k-1)t}{k^2}\bigg) = \frac{t-1}{k-1}\Big(2 - \frac{k^2}{(k-1)t}\cdot\frac{t}{k}\Big) = \frac{(t-1)(k-2)}{(k-1)^2}.$$
\end{proof}

\subsection{Examples for the Hamming product}

Throughout this subsection, $o_k(1)$ denotes a quantity tending to zero as $n\to\infty$ with $k$ and any other displayed integer parameters fixed.

\begin{Statement}\label{st1}
  For each integer $2 \leq s < k$ there exists a sequence $m_n = \frac{1}{s}\cdot k^{n}(1-o_k(1))$ such that $$\frac{\Delta_{K_k^{\Box n}}(m_n)}{n(k-1)} \leq (1+o_k(1))\bar a_k\Big(\frac{1}{s}\Big),$$
  and consequently,
  $$\lim_{\alpha \to \frac{1}{s}-0}\bar\Delta_{k,\Box}(\alpha) = \bar a_k\Big(\frac{1}{s}\Big).$$
\end{Statement}

\begin{proof}

Write $k=as+r$, where $0\leq r<s$. By the Chernoff--Hoeffding bound, all but $o_k(k^n)$ strings $x\in\{0,\ldots,k-1\}^n$ satisfy
$$|\{i:x_i=t\}|\leq(1+o_k(1))\frac{n}{k}\qquad(t=0,1,\ldots,k-1).$$
By the pigeonhole principle, for some residue $R\pmod s$, at least $s^{-1}(1-o_k(1))k^n$ such strings have coordinate sum congruent to $R$. Denote this family by $F_{s,R}$. An edge inside $F_{s,R}$ replaces a symbol $t$ by $t+s\ell$ for some nonzero integer $\ell$. For each $\ell\in[-a,a]\setminus\{0\}$, there are $k-|\ell s|$ values of $t$ for which $t+s\ell\in[0,k-1]$. Thus every vertex of $F_{s,R}$ has degree at most 
$$2\sum_{\ell = 1}^a (k - \ell s)\frac{n}{k}(1+o_k(1)) = n\Big(2a - s\frac{a(a+1)}{k}\Big)(1+o_k(1)),$$
and thus
$$\frac{\Delta(K_k^{\Box n}[F_{s,R}])}{n(k-1)} \leq \frac{a}{k-1}\Big(2 - s\frac{a+1}{k}\Big)(1+o_k(1)).$$

On the other hand, $\frac{a}{k} \leq \frac{1}{s} < \frac{a+1}{k}$, (i.e., $q = a$) and thus $$\bar a_k\Big(\frac{1}{s}\Big) = \frac{a}{k-1}\Big(2 - s\frac{a+1}{k}\Big).$$

\end{proof}

\begin{Statement}
  For each integer $s$ such that $3 \leq 2s + 1 < 2k$ there exists a sequence $m_n = \frac{2}{2s+1}\cdot k^{n}(1-o_k(1))$ such that $$\frac{\Delta_{K_k^{\Box n}}(m_n)}{n(k-1)} \leq (1+o_k(1))\bar a_k\Big(\frac{2}{2s+1}\Big),$$
  and consequently,
  $$\lim_{\alpha \to \frac{2}{2s+1}-0}\bar\Delta_{k,\Box}(\alpha) = \bar a_k\Big(\frac{2}{2s+1}\Big).$$
\end{Statement}

\begin{proof}
  Let $a$ and $0\leq r<2s+1$ be such that $k=a(2s+1)+r$. Let $F_{s,R}$ be the family of all strings $x\in \{0,\ldots,k-1\}^n$ such that $\sum_{i}x_i \equiv R \mod 2s + 1$ or $\sum_{i}x_i \equiv R+s \mod 2s + 1$ and $|\{i: x_i = t\}| \leq (1+o_k(1))\frac{n}{k}$ for each $t = 0,1,\ldots,k-1$. By the Chernoff--Hoeffding bound and an averaging argument $|F_{s,R}| \geq \frac{2}{2s+1}k^{n}(1 - o_k(1))$ for at least one residue $R$. Then the degree of each element with residue of the sum equal to $R$ is at most
  $$\sum_{\ell : |(2s+1)\ell| < k,\, \ell \neq 0} (k - |(2s+1)\ell|)\frac{n}{k}(1+o_k(1)) + \sum_{\ell : |(2s+1)\ell+s| < k} (k - |(2s+1)\ell+s|)\frac{n}{k}(1+o_k(1)).$$
  This holds because to increase the sum by value $t$ ($|t| < k$, $t \neq 0$) going along an edge, we need to replace a value $x$ in some coordinate by value $x+t$, which can be done for exactly $k - |t|$ values $x$. Similarly, the degree of each element with residue of the sum equal to $R + s$ is at most
  $$\sum_{\ell : |(2s+1)\ell|<k,\;\ell\neq0} (k - |(2s+1)\ell|)\frac{n}{k}(1+o_k(1)) + \sum_{\ell : |(2s+1)\ell-s| < k} (k - |(2s+1)\ell-s|)\frac{n}{k}(1+o_k(1)).$$
  Note that by the change $\ell \to -\ell$ in the second summand the two estimates are equal. 
  
  Case 1: $r \leq s$. Then
  \begin{multline*}
  \Delta(K_k^{\Box n}[F_{s,R}]) \leq \Bigg(2\sum_{\ell = 1}^{a}(k - \ell (2s+1)) + \sum_{\ell = 0}^{a-1}(k - (\ell(2s+1) + s)) + \sum_{\ell = 1}^a(k - (\ell(2s+1) - s))\Bigg)\frac{n}{k}(1+o_k(1)) =\\= (4ka - (2s+1)a(2a+1))\frac{n}{k}(1+o_k(1)).
  \end{multline*}
  Thus, $$\frac{\Delta(K_k^{\Box n}[F_{s,R}])}{n(k-1)} \leq \frac{2a}{k-1}\bigg(2-\frac{2s+1}{2}\cdot\frac{2a+1}{k}\bigg)(1+o_k(1)).$$

  On the other hand, $\frac{2a}{k} \leq \frac{2}{2s+1} < \frac{2a + 1}{k}$ (i.e., $q = 2a$), and thus 
  $$\bar a_k\Big(\frac{2}{2s+1}\Big) = \frac{2a}{k-1}\Big(2 - \frac{2s+1}{2}\cdot\frac{2a+1}{k}\Big).$$

  Case 2: $r \geq s + 1$. Then 
  \begin{multline*}
  \Delta(K_k^{\Box n}[F_{s,R}]) \leq \Bigg(2\sum_{\ell = 1}^{a}(k - \ell (2s+1)) + \sum_{\ell = 0}^{a}(k - (\ell(2s+1) + s)) + \sum_{\ell = 1}^{a+1}(k - (\ell(2s+1) - s))\Bigg)\frac{n}{k}(1+o_k(1)) =\\= (2k(2a+1) - (2s+1)(a+1)(2a+1))\frac{n}{k}(1+o_k(1)).
  \end{multline*}
  Thus, $$\frac{\Delta(K_k^{\Box n}[F_{s,R}])}{n(k-1)} \leq \frac{2a+1}{k-1}\bigg(2-(2s+1)\frac{a+1}{k}\bigg)(1+o_k(1)).$$

  On the other hand, $\frac{2a+1}{k} \leq \frac{2}{2s+1} < \frac{2a + 2}{k}$ (i.e., $q = 2a+1$), and thus 
  $$\bar a_k\Big(\frac{2}{2s+1}\Big) = \frac{2a+1}{k-1}\Big(2 - \frac{2s+1}{2}\cdot\frac{2a+2}{k}\Big) = \frac{2a+1}{k-1}\Big(2 - (2s+1)\frac{a+1}{k}\Big).$$
\end{proof}

\begin{Statement}
  There exists a sequence $m_n = \frac{k-2}{k-1}\cdot k^{n}(1-o_k(1))$ such that $$\frac{\Delta_{K_k^{\Box n}}(m_n)}{n(k-1)} \leq (1+o_k(1))\bar a_k\Big(\frac{k-2}{k-1}\Big),$$
  and consequently,
  $$\lim_{\alpha \to \frac{k-2}{k-1}-0}\bar\Delta_{k,\Box}(\alpha) = \bar a_k\Big(\frac{k-2}{k-1}\Big).$$
\end{Statement}

\begin{proof}
  Let $F_R$ be the family of sequences $x\in \{0,\ldots,k-1\}^n$ such that $\sum_i x_i\not\equiv R\pmod{k-1}$ and $|\{i: x_i = t\}| \leq (1+o_k(1))\frac{n}{k}$ for each $t = 0,1,\ldots,k-1$. By the Chernoff--Hoeffding bound and an averaging argument $|F_{R}| \geq \frac{k-2}{k-1}k^{n}(1 - o_k(1))$ for at least one residue $R$. For each $x\in F_{R}$ with residue of the sum equal to $R'$ all edges lead to $F_R$ except those that increase the residue by $r := R - R' \not \equiv 0$ (we can assume $r \in [1,k-2]$). There are at least
  \begin{multline*}
  \sum_{\ell : |\ell (k-1) + r|<k} (k - |\ell (k-1) + r|)\frac{n}{k}(1 - o_k(1)) =\\= \sum_{\ell\in\{-1,0\}}(k - |\ell (k-1) + r|)\frac{n}{k}(1 - o_k(1)) = (k+1)\frac{n}{k}(1 - o_k(1))
  \end{multline*}
  such edges. Thus 

  $$\frac{\Delta(K_{k}^{\Box n}[F_R])}{(k-1)n} \leq \Big(1 - \frac{k+1}{k-1}\cdot \frac{1}{k}\Big)(1 + o_k(1)) = \frac{k^2 - 2k -1}{k(k-1)}(1 + o_k(1)).$$

  On the other hand, $\frac{k-2}{k} \leq \frac{k-2}{k-1} \leq \frac{k-1}{k}$ (i.e., $q = k-2$) and thus 
  $$\bar a_k\Big(\frac{k-2}{k-1}\Big) = \frac{k-2}{k-1}\Big(2 - \frac{k-1}{k-2}\cdot \frac{k-1}{k}\Big) = \frac{k^2 - 2k - 1}{k(k-1)}.$$
  
\end{proof}

\begin{Statement}
  There exists a sequence $m_n = \frac{k}{k+1}\cdot k^{n}(1-o_k(1))$ such that $$\frac{\Delta_{K_k^{\Box n}}(m_n)}{n(k-1)} \leq (1+o_k(1))\bar a_k\Big(\frac{k}{k+1}\Big),$$
  and consequently,
  $$\lim_{\alpha \to \frac{k}{k+1}-0}\bar\Delta_{k,\Box}(\alpha) = \bar a_k\Big(\frac{k}{k+1}\Big) = 1 - \frac{1}{k}.$$
\end{Statement}

\begin{proof}
  The proof is identical to that of the preceding statement.
\end{proof}

The first sequence of constructions satisfies the stronger conclusion below.

\begin{Statement}\label{st}
  For every integer $2\leq s<k$, there is no jump at $\alpha=1/s$; that is,
  
  $$\lim_{\alpha \to \frac{1}{s} + 0}\bar\Delta_{k,\Box}(\alpha) = \bar \Delta_{k,\Box}\Big(\frac{1}{s}\Big) = \bar a_k\Big(\frac{1}{s}\Big).$$
\end{Statement}

The proof uses the following two claims.

\begin{Claim}\label{cl11}
  For any $k \geq 3$ and $n \geq 1$ there exists a subset $D_{n,k} \subset V(K_k^{\Box n})$ such that $|D_{n,k}| = k^{n-1} + k - 2$ and $\Delta(K_k^{\Box n}[D_{n,k}])=k-2$.
\end{Claim}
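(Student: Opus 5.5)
We construct the sets $D_{n,k}$ by induction on $n$, generalizing the folklore induced-matching construction for $K_3^{\Box n}$ given in the introduction (which is exactly the case $k=3$, since $k-2=1$ and $k^{n-1}+k-2=3^{n-1}+1$). As there, we prove a stronger statement so that the induction closes. Let
$$A_0^n:=\Big\{x\in\Z_k^n:\sum_i x_i\equiv 0\pmod k\Big\}.$$
This set is independent in $K_k^{\Box n}$, because changing one coordinate changes the coordinate sum by a nonzero residue. It has size $k^{n-1}$. The strengthened claim is that $D_{n,k}$ can be chosen with $|D_{n,k}|=k^{n-1}+k-2$, with $\Delta(K_k^{\Box n}[D_{n,k}])=k-2$, and with $D_{n,k}\cap A_0^n=\emptyset$.

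For the base case $n=1$, take $D_{1,k}=\{1,\ldots,k-1\}\subseteq\Z_k$. It has $k-1=k^0+k-2$ elements, induces $K_{k-1}$ (so every degree equals $k-2$), and avoids $A_0^1=\{0\}$. For the induction step, set
$$D_{n,k}:=D_{n-1,k}\times\{0\}\;\cup\;A_0^{n-1}\times\{1,\ldots,k-1\}.$$
The two parts are disjoint, so $|D_{n,k}|=(k^{n-2}+k-2)+(k-1)k^{n-2}=k^{n-1}+k-2$. Every vertex avoids $A_0^n$: a vertex $(x,0)$ with $x\in D_{n-1,k}$ has coordinate sum $\sum x_i\not\equiv0$, and a vertex $(x,j)$ with $x\in A_0^{n-1}$ and $j\neq 0$ has coordinate sum $j\not\equiv 0$.

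For the degree bound, take first a vertex $(x,j)$ of the second part. Changing the last coordinate gives $(x,j')$. If $j'\neq0$, this vertex lies in $D_{n,k}$, and there are exactly $k-2$ such choices of $j'$. If $j'=0$, then $(x,0)$ would require $x\in D_{n-1,k}$, which is impossible since $x\in A_0^{n-1}$ and $D_{n-1,k}\cap A_0^{n-1}=\emptyset$. Changing a coordinate of $x$ instead gives $(x',j)$ with $x'\notin A_0^{n-1}$ (by independence of $A_0^{n-1}$) and $j\neq0$, so this vertex lies in neither part. Hence the degree is exactly $k-2$.

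Now take a vertex $(x,0)$ of the first part. Neighbours $(x',0)$ contribute at most $k-2$, by the induction hypothesis. Neighbours $(x,j)$ with $j\neq0$ would need $x\in A_0^{n-1}$, which is impossible since $x\in D_{n-1,k}$. This completes the induction and gives $\Delta(K_k^{\Box n}[D_{n,k}])=k-2$.

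There is no real obstacle here. The only point needing care is to carry the disjointness from $A_0^n$ as part of the induction hypothesis, because it is exactly what rules out edges between the two parts.
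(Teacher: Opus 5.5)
Your proof is correct and is the same as the paper's. The paper uses the same strengthened hypothesis (disjointness from $A_n^0$), the same base case $\{1,\ldots,k-1\}$, and the same step $D_{n,k}=D_{n-1,k}\times\{0\}\cup A_{n-1}^0\times\{1,\ldots,k-1\}$; you just write out the degree check that the paper dismisses as ``clearly''.
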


\begin{proof}
  Denote $A_n^r := \{x \in \Z_{k}^{n} : \sum_{i} x_i \equiv r\pmod k\}$. Then each $A_n^r$ is an independent set in the Hamming product. Let us prove by induction on $n$ that there exists the desired vertex subset with the additional property that it is disjoint from $A_n^0$. 

  For the base case $n=1$, take $\{1,\ldots,k-1\}$. For the induction step, let $D_{n-1, k}$. Define $D_{n,k} = D_{n-1,k} \times \{0\} \cup A_{n-1}^0\times \{1,2,\ldots,k-1\}$. This clearly satisfies the maximum degree condition and its size is $|D_{n,k}| = |D_{n-1,k}| + (k-1)|A_{n-1}^0| = k^{n-2} + k - 2 + (k-1)k^{n-2} = k^{n-1} + k - 2$, as desired. 
\end{proof} 

\begin{Claim}\label{cl22}
  Fix integers $2\leq s<k$. For $r\in\Z_s$, let
  $$N_s^r:=\left|\left\{x\in\{0,1,\ldots,k-1\}^n:\sum_i x_i\equiv r\pmod{s}\right\}\right|.$$
  Then, uniformly in $r$ as $n\to\infty$,
  $$N_s^r=\frac{k^n}{s}(1+o(1)).$$
\end{Claim}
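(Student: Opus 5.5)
We prove Claim~\ref{cl22} with the standard roots-of-unity filter. Let $\omega_s$ be a primitive $s$-th root of unity. For every $r\in\Z_s$,
$$N_s^r=\frac1s\sum_{j=0}^{s-1}\omega_s^{-jr}\sum_{x\in\{0,\ldots,k-1\}^n}\omega_s^{j\sum_i x_i}
=\frac1s\sum_{j=0}^{s-1}\omega_s^{-jr}\,S_j^n,
\qquad S_j:=\sum_{t=0}^{k-1}\omega_s^{jt}.$$
The coordinates contribute independently, so the inner sum factorizes as $S_j^n$. The term $j=0$ contributes exactly $k^n/s$. The remaining terms give
$$\Big|N_s^r-\frac{k^n}{s}\Big|\leq\frac{s-1}{s}\max_{1\leq j\leq s-1}|S_j|^n .$$
It therefore suffices to show that $|S_j|<k$ for each $j\neq0$. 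Once this holds, with $\theta:=\max_{j\neq0}|S_j|/k<1$, we obtain $N_s^r=\frac{k^n}{s}(1+O(s\theta^n))$ uniformly in $r$.

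The key step is the strict inequality $|S_j|<k$ for $1\leq j\leq s-1$. The quantity $S_j$ is a sum of $k$ unit complex numbers. By the equality case of the triangle inequality, $|S_j|=k$ holds only if all the summands $\omega_s^{jt}$ are equal. However, the terms $t=0$ and $t=1$ give $1$ and $\omega_s^{j}$, and $\omega_s^j\neq1$ because $s\nmid j$. These two terms are both present since $k\geq2$. Hence $|S_j|<k$.

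There is no real obstacle here. The only point needing care is to state that $\theta$ depends only on $s$ and $k$, not on $r$ or $n$, so that the $o(1)$ term is uniform in $r$.
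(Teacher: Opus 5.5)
Your proof is correct and follows the paper's argument: the same roots-of-unity filter $sN_s^r=\sum_{j}\omega_s^{-jr}S_j^n$, with the $j=0$ term giving exactly $k^n/s$. The only difference is how you bound $|S_j|$ for $j\neq0$. The paper cancels complete periods of length $s$ to get the explicit bound $|S_j|\leq (k \bmod s)\leq s-1<k$, whereas your strict triangle inequality gives $|S_j|<k$ with no explicit rate and without needing $s<k$, which is already enough for a uniform $o(1)$.
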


\begin{proof}

  Let $k = as + t$ with $0 \leq t < s$.

  For a nonnegative integer $S$, let $N(S):=|\{x\in\{0,1,\ldots,k-1\}^n:\sum_i x_i=S\}|$. Note that $(1+x+x^2+\cdots+x^{k-1})^n = \sum_{S\geq 0} N(S)x^S$. Let $\omega_s$ be a primitive $s$th root of unity. For each $\ell = 0,1,\ldots,s-1$ denote $W_\ell:=1+\omega_s^\ell+\omega_s^{2\ell}+\cdots+\omega_s^{(k-1)\ell}$. Then
  $$W_\ell^n = \sum_{r' = 0}^{s-1} N_s^{r'} \omega_s^{\ell r'}.$$

  Thus
  $$\sum_{\ell = 0}^{s-1}W_\ell^n \omega_s^{-\ell r} = sN_s^r +\sum_{\ell = 0}^{s-1}\sum_{r' \neq r}N_s^{r'} \omega_s^{\ell (r'-r)} = sN_s^r +\sum_{r' \neq r}N_s^{r'}\sum_{\ell = 0}^{s-1} \omega_s^{\ell (r'-r)} = sN_s^r.$$

  On the other hand, for any $\ell \not \equiv 0 \mod s$ (since $\sum_{i=0}^{s-1} \omega_s^{i\ell} = 0$)
  $$|W_\ell|=|\omega_s^{\ell as}+\omega_s^{\ell(as+1)}+\cdots+\omega_s^{\ell(k-1)}|\leq t\leq s-1$$
  and $W_0 = k$.

  Thus $$\frac{|N_s^r - \frac{1}{s}k^{n}|}{k^{n}} = \frac{|N_s^r - \frac{1}{s}W_0^n|}{k^{n}} \leq \frac{1}{s k^n}\sum_{\ell = 1}^{s-1}|W_\ell|^n \leq \frac{(s-1)^{n+1}}{s k^n} \to 0 \,\, (n \to \infty),$$
  as desired.
\end{proof}

\begin{proof}[Proof of Statement~\ref{st}]

First suppose that $s\geq3$. Fix some integer $b \geq 2$ and suppose $b | n$. Let $F$ be the set of $x \in \{0,1,\ldots,k-1\}^n$ such that
$$\left(\sum_{i=1}^{n/b}x_i,\sum_{i=n/b+1}^{2n/b}x_i,\ldots,\sum_{i=(b-1)n/b+1}^{n}x_i\right)\pmod s\in D_{b,s}$$ (where $D_{b, s}$ is as defined in Claim~\ref{cl11}) and in addition $|\{i : x_i = t\}| \leq (1+o_{k,b}(1))\frac{n}{k}$ for every $t=0,1,\ldots,k-1$. By Claim~\ref{cl22} and the Chernoff-Hoeffding bound $|F| = |D_{b,s}|s^{-b}k^n(1 - o_{k,b}(1))$. Each edge going from some vertex in $K_k^{\Box n}[F]$ corresponds to changing the value in some coordinate, and this change should either conserve the residue modulo $s$ or change it in accordance to the edges of $D_{b,s}$. It is shown in the proof of Statement~\ref{st1} that there are $\bar a_k(\frac{1}{s})(k-1)n(1 + o_{k,b}(1))$ edges of the first type. Since at most $(\frac{k}{s} + 1) \leq \frac{2k}{s}$ changes of value in range $[0,k-1]$ change the residue in the same way, there are at most $\Delta(D_{b,s}) \cdot \frac{2k}{s} \cdot\frac{n}{b}$. The fraction of vertices inside $F$ is then $$\alpha = |D_{b,s}|s^{-b}(1 - o_{k,b}(1)) = \frac{1}{s}\Big(1 + \frac{s-2}{s^{b-1}}\Big)(1 - o_{k,b}(1)),$$

and the maximum degree normalized by $n(k-1)$ is at most
$$\bar a_k\Big(\frac{1}{s}\Big)(1 + o_{k,b}(1)) + \frac{(s-2)\cdot 2k}{sb(k-1)}.$$

Then for any $\varepsilon > 0$ and any $b \geq 3$

$$\bar \Delta_{k,\Box}\Big(\frac{1}{s}(1 + \frac{s-2}{s^{b-1}})(1-\varepsilon)\Big) \leq \bar a_k\Big(\frac{1}{s}\Big) + \frac{(s-2)\cdot 2k}{sb(k-1)}.$$

Substituting an appropriate $\varepsilon$ we get

$$\bar \Delta_{k,\Box}\Big(\frac{1}{s}(1 + \frac{s-2}{2s^{b-1}})\Big) \leq \bar a_k\Big(\frac{1}{s}\Big) + \frac{(s-2)\cdot 2k}{sb(k-1)},$$

which proves the statement (considering $b \to \infty$).

For $s=2$, replace $D_{b,s}$ by a Huang-tight induced subgraph~\cite{CFGS} of the hypercube $Q_b$ on $2^{b-1}+1$ vertices and with maximum degree $O(\sqrt b)$. The same argument then applies, since the additional normalized degree tends to zero as $b\to\infty$.
  
\end{proof}

\begin{Statement}
  For any $s > k$ coprime to $k!$ we have 

  $$\lim_{\alpha \to 1-\frac{1}{s} - 0}\bar\Delta_{k,\Box}(\alpha) = \bar a_k\Big(1-\frac{1}{s}\Big).$$

  If $k$ is a prime, then also
  for every $\ell\in[k-1]$,
  $$\lim_{\alpha \to \frac{\ell}{k}(1-\frac{1}{s}) - 0}\bar\Delta_{k,\Box}(\alpha) = \bar a_k\Big(\frac{\ell}{k}\Big(1-\frac{1}{s}\Big)\Big).$$
\end{Statement}

\begin{proof}
  Fix $s>k$ with $\gcd(s,k!)=1$, and let $n$ be a multiple of $s-1$. Partition $[n]$ into consecutive blocks $I_1,\ldots,I_{s-1}$ of equal size, and define
  $$W(x):=\sum_{j=1}^{s-1}j\sum_{i\in I_j}x_i\pmod{s}.$$
  Choose a sequence $\eta_n\to0$ slowly enough that all but $o(k^n)$ strings satisfy
  $$\left|\,|\{i\in I_j:x_i=t\}|-\frac{n}{(s-1)k}\right|\leq\eta_n n
  \qquad(j\in[s-1],\ t\in\Z_k).$$
  Among these typical strings, choose a residue $R\in\Z_s$ for which the class $W(x)=R$ has size at most one $s$th of the total, and let $F$ be the typical strings with $W(x)\ne R$. Then
  $$|F|\geq\left(1-\frac1s\right)k^n(1-o(1)).$$

  Fix $x\in F$, write $W(x)=R'\ne R$, and consider replacing a symbol $a$ by $b\ne a$ in block $I_j$. The new weighted residue is $R'+(b-a)j$. Since $1\leq|b-a|\leq k-1$ and $\gcd(s,k!)=1$, the element $b-a$ is invertible modulo~$s$. Thus, for each ordered pair $(a,b)$, exactly one block $I_j$ would send the weighted residue to the excluded value $R$. Ignoring the typicality condition can only increase the degree, so the block-balance estimates give
  $$\deg_F(x)\leq\left(1-\frac{1}{s-1}\right)n(k-1)(1+o(1)).$$
  Hence
  $$\frac{\Delta(K_k^{\Box n}[F])}{n(k-1)}\leq\left(1-\frac{1}{s-1}\right)(1+o(1)).$$
  Finally, $1-1/s>(k-1)/k$, so the relevant interval has $q=k-1$, and
  $$\bar a_k\left(1-\frac1s\right)=2-\frac{1}{1-1/s}=1-\frac{1}{s-1}.$$

  Now assume that $k$ is prime. Let $F'\subseteq\Z_k^{n'}$ induce maximum degree $d$, and put $n=(k-1)n'$. For $x=(x_1,\ldots,x_{k-1})\in(\Z_k^{n'})^{k-1}$, define
  $$\psi(x):=x_1+2x_2+\cdots+(k-1)x_{k-1},
  \qquad F:=\psi^{-1}(F').$$
  The map $\psi$ is surjective and has kernel of size $k^{(k-2)n'}$, so $|F|=k^{(k-2)n'}|F'|$. For each nonzero $t\in\Z_k$, the number of neighbors of $x$ in $F$ whose total coordinate sum is $t$ more than that of $x$ is
  $$\sum_{b=1}^{k-1}\sum_{i=1}^{n'}\mathbf{1}_{F'}(\psi(x)+bt e_i)=\deg_{F'}(\psi(x)),$$
  because multiplication by $t$ permutes the nonzero elements of $\Z_k$.

  Order the residue classes of the total coordinate sum so that
  $$|F\cap\{\textstyle\sum_i x_i=r_1\}|\geq\cdots\geq|F\cap\{\textstyle\sum_i x_i=r_k\}|,$$
  and let $F_\ell$ be the union of the first $\ell$ classes. Then $|F_\ell|\geq(\ell/k)|F|$ and $\Delta(K_k^{\Box n}[F_\ell])\leq(\ell-1)d$. Consequently,

  $$\bar\Delta_{k,\Box}\Big(\frac{\ell}{k}\alpha\Big) \leq \frac{\ell - 1}{k-1}\bar\Delta_{k,\Box}(\alpha).$$

  If $(k-1)/k<\alpha<1$, then $(\ell-1)/k<(\ell/k)\alpha<\ell/k$, and therefore $\bar a_k(\frac{\ell}{k}\alpha) = \frac{\ell-1}{k-1}(2 - \frac{1}{\frac{\ell}{k}\alpha}\frac{\ell}{k}) = \frac{\ell-1}{k-1}\bar a_k(\alpha)$. For $\alpha = 1 - \frac{1}{s}$ with $s > k$ the above holds and so we get 

  $$\lim_{\alpha \to \frac{\ell}{k}(1-\frac{1}{s}) - 0}\bar\Delta_{k,\Box}(\alpha) = \bar a_k\Big(\frac{\ell}{k}\Big(1-\frac{1}{s}\Big)\Big).$$
  
\end{proof}

\section{Asymptotics as $k\to \infty$}

\subsection{The tensor product}

The proof closely follows the argument of~\cite{KneserCEFL}.

\begin{Theorem}
  Let $s\geq1$ be an integer, let $k\geq1000s^2$, and let $s<\lambda\leq s+1$. If $n\geq3$, then
  $$\Delta_{K_k^{\otimes n}}(\lambda k^{n-1})
  =\frac{\lambda s}{s+1}(k-1)^{n-1}\bigl(1\pm O(\sqrt{s/k})\bigr).$$
  If $n=2$, then
  $$\Delta_{K_k^{\otimes 2}}(\lambda k)
  =\frac{\lambda s}{s+1}(k-1)\bigl(1\pm O(\sqrt{s/k}+\sqrt{\ln k/k})\bigr).$$
\end{Theorem}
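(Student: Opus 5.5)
The plan is to prove matching upper and lower bounds. The upper bound is a slice-product construction. The lower bound is a slice decomposition along the first coordinate combined with the expander mixing lemma, as in the proof of part~\ref{thm:jump0} of Theorem~\ref{thm-jump} and in~\cite{KneserCEFL}.

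\emph{Upper bound.} Let $A\subseteq\Z_k$ with $|A|=\lceil\lambda k/(s+1)\rceil$; note $|A|\leq k$ since $\lambda\leq s+1$. Take
$$F:=\{0,1,\ldots,s\}\times A\times\Z_k^{n-2},$$
and delete points if it is too large. A vertex $(i,a,z)\in F$ has neighbours in $F$ exactly at $(i',a',z')$ with $i'\neq i$, $a'\in A\setminus\{a\}$, $z'_j\neq z_j$. Hence its degree is at most $s|A|(k-1)^{n-2}$, which equals $\frac{\lambda s}{s+1}(k-1)^{n-1}(1+O(1/k))$. For $n=2$ the same set $\{0,\ldots,s\}\times A$ works with the same estimate. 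So the construction is not where the extra $\sqrt{\ln k/k}$ comes from; that term should only be a loss in the lower bound.

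\emph{Lower bound: slice inequality.} Let $|F|=\lambda k^{n-1}$ with $\Delta(K_k^{\otimes n}[F])\leq d$. Write $F_i=F\cap\{x_1=i\}$ and $\alpha_i=|F_i|/k^{n-1}$, so $\sum_i\alpha_i=\lambda$. For $i\neq j$, the edges between slices $i$ and $j$ form a copy of the bipartite double cover of $K_k^{\otimes(n-1)}$. This is a $(k^{n-1},(k-1)^{n-1},(k-1)^{n-2})$-graph, so the expander mixing lemma gives
$$e(F_i,F_j)\geq k^{n-1}(k-1)^{n-1}\Big(\alpha_i\alpha_j-\tfrac{1}{k-1}\sqrt{\alpha_i\alpha_j(1-\alpha_i)(1-\alpha_j)}\Big).$$
Summing over $j\neq i$ and using $d|F_i|\geq\sum_{j\neq i}e(F_i,F_j)$, we get for every $i$ with $\alpha_i>0$:
$$\frac{d}{(k-1)^{n-1}}\geq(\lambda-\alpha_i)-\frac{1}{k-1}\sqrt{\frac{1-\alpha_i}{\alpha_i}}\sum_{j\neq i}\sqrt{\alpha_j(1-\alpha_j)}.\qquad(\ast)$$
By Cauchy--Schwarz the error is at most $\sqrt{\lambda/(\alpha_i k)}\,(1+o(1))$. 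It is therefore $O(\sqrt{s/k})$ relative to the main term as soon as $\alpha_i\gtrsim\lambda/(s+1)$.

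\emph{Lower bound: choosing a slice (main obstacle).} The task is to find a slice $i$ with $\alpha_i\leq\frac{\lambda}{s+1}(1+O(\sqrt{s/k}))$ that is not much smaller than $\lambda/(s+1)$; inequality $(\ast)$ then gives the claim. The bad configuration is when some slices are nearly full and the leftover mass $\lambda-(\text{full part})$ is spread thinly over many slices. For example, $s$ slices of density about $1$ plus crumbs: a full slice then only yields $d\gtrsim(\lambda-1)(k-1)^{n-1}$, which is smaller than the target. I would handle this by applying $(\ast)$ to a \emph{small} slice $i$, while keeping the per-$j$ error $\sqrt{\alpha_j(1-\alpha_j)/\alpha_i}$ rather than the Cauchy--Schwarz bound. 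Nearly full slices contribute almost no error, and a vertex of slice $i$ sees essentially all of every full slice. Concretely, I would split the slices into heavy ones ($\alpha_j\geq1-\eta$), medium ones, and light ones ($\alpha_j\leq\tau$). Then I would take a weighted average of $(\ast)$ over all $i\in\mathrm{supp}$, with weights $\alpha_i$; this is just the average-degree count restricted to cross-slice edges. From it I would show that either the average already exceeds $\frac{\lambda s}{s+1}$, or the light mass is $O(\sqrt{s/k})\lambda$. In the latter case, pigeonhole among the remaining at most $s+1$ substantial slices produces the required medium slice. Here $k\geq1000s^2$ is used, and I expect this optimization, mirroring the corresponding step of~\cite{KneserCEFL}, to be the hardest part.

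For $n=2$ the slices live in $\Z_k$ and the second eigenvalue ratio is still $1/(k-1)$. However, the sets $F_j$ then have only $O(k)$ elements, and bounding individual vertex degrees, rather than averages, when the light slices are handled may require a union or Chernoff-type count. That is where I expect the extra $\sqrt{\ln k/k}$ loss to enter.
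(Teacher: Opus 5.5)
Your upper bound is correct, and it is cleaner than the paper's. The set $\{0,\dots,s\}\times A\times\Z_k^{n-2}$ is regular of degree $s(|A|-1)(k-1)^{n-2}\le\frac{\lambda s}{s+1}\frac{k}{k-1}(k-1)^{n-1}$. It therefore avoids the $\sqrt{\ln k/k}$ loss that the paper incurs by randomly thinning $\{0,\dots,s\}\times\Z_k^{n-1}$ via Claim~\ref{cl:easy-prob-constr-k}.

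The lower bound, however, has a genuine gap. As written, it uses the degree hypothesis only through $d|F_i|\ge\sum_{j\ne i}e(F_i,F_j)$ for the slices of one coordinate, i.e.\ through slice-\emph{averaged} degrees, and this is too weak. In particular, your dichotomy (``either the average exceeds $\frac{\lambda s}{s+1}$ or the light mass is $O(\sqrt{s/k})\lambda$'') is false.

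Take $s=2$ and $F=\{x_1=0\}\cup\{x_2=0\}\cup E$, where $E\subseteq\{x_1\ne0,\,x_2\ne0\}$ is a random set of size $\delta k^{n-1}$ for a small constant $\delta$, so $\lambda\approx2+\delta$. The theorem holds for $F$, because every vertex of $E$ has about $2(k-1)^{n-1}>\frac43(k-1)^{n-1}$ neighbours. Now look at what your inequalities see:
\begin{itemize}
\item Along coordinate $1$ there is one full slice and $k-1$ slices of density about $(1+\delta)/k$, so the light mass is about $1$.
\item The overall average degree is $(1+O(\delta))(k-1)^{n-1}$, below $\frac{\lambda s}{s+1}(k-1)^{n-1}$.
\item Every slice, in \emph{every} coordinate, has average degree $(1+O(\delta))(k-1)^{n-1}$.
\end{itemize}
So no weighting or heavy/medium/light split of these slice inequalities, in any coordinate, can certify more than $(1+O(\delta))(k-1)^{n-1}$. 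This remains true even if you use $e(F_i,F_j)\ge0$ for light pairs. The high-degree vertices form a vanishing fraction of each slice. Their second block of neighbours, $\{x_2=0\}$, is scattered over the light slices of coordinate $1$, where the mixing lemma says nothing.

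What is missing is a way to find dense stars $\{x_i=q\}$ in arbitrary, possibly \emph{different}, coordinates, and then to control the vertices outside their union. The paper proceeds as follows.
\begin{enumerate}
\item It first reduces, by random thinning, to $|F|=sk^{n-1}+1$.
\item Given any $s-1$ stars, the degree bound forces large level-one Fourier weight for $F$ restricted to the box $\mathcal B$ that avoids them.
\item A fourth-moment/H\"older argument (Lemma~\ref{lem-big-comp}) turns this into a new star of noticeable density.
\item Lemma~\ref{lem-frac-get-bigger} boosts that density to $\frac{s}{s+1}-O(\sqrt{s/k})$.
\item This yields $s$ dense stars (Lemma~\ref{lem-many-dense-stars}). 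The non-product geometry of $\mathcal B$ is handled by the bipartite mixing estimate of Lemma~\ref{lem:outdeg}.
\item The vertices of $F$ inside $\mathcal B$ then have on average about $\frac{s^2}{s+1}(k-1)^{n-1}$ neighbours in these stars (Lemma~\ref{lem-jumps}).
\end{enumerate}
Your one-coordinate slicing covers only the case in which all $s$ stars are parallel.
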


The proof is given in the following lemmas. Throughout this subsection, we assume $k\geq1000s^2$.

\begin{Lemma}\label{lem-big-comp}

Let $\Omega:=\Z_{k_1}\times\cdots\times\Z_{k_n}$, equipped with the uniform probability measure, and let $F\subseteq\Omega$. Write the Fourier decomposition of its characteristic function as
  $$f(x)=\sum_{y\in\Omega}\hat f(y)\prod_{i=1}^n\omega_{k_i}^{x_i y_i},$$
  where each $\omega_{k_i}$ is a primitive $k_i$th root of unity. Set $\alpha:=|F|/|\Omega|=\hat f(0)$, and let $e_i$ denote the $i$th standard basis vector. Let $f_1$ be the projection of $f$ onto the Fourier characters of Hamming weight one; equivalently,
  $$f_1=\sum_{i=1}^n\sum_{\ell\in\Z_{k_i}\setminus\{0\}} \hat f(\ell e_i)\omega_{k_i}^{\ell x_i}.$$
  If $\alpha=0$, set $\eta:=0$; otherwise, define $\eta$ by $\alpha\eta:=\|f_1\|_2^2$. Define $\gamma_{i,a} = \frac{|F\cap \{x_i = a\}|}{|\{x_i = a\}|}$ for $a\in \Z_{k_i}$ and $\gamma'_{i,a} = \gamma_{i,a}-\alpha$. Then $$\max_{i,a}(\gamma_{i,a}')^2 \geq \eta^3 - 3\alpha\eta.$$

\end{Lemma}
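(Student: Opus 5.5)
The plan is a moment argument on the level-one part $f_1$. I first identify $f_1$ explicitly, then squeeze $\mathbb{E}[f f_1^2]$ between a lower bound from Cauchy--Schwarz and an upper bound from the fourth moment of $f_1$.

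\textbf{Step 1: identify $f_1$.} For each $i$, the projection of $f$ onto the characters $\ell e_i$ with $\ell\neq0$ is the function $g_i(x):=\gamma'_{i,x_i}$. Indeed, the projection onto all characters depending only on $x_i$ is the conditional expectation $\mathbb{E}[f\mid x_i]=\gamma_{i,x_i}$, and subtracting the constant term $\alpha$ leaves $\gamma'_{i,x_i}$. Hence $f_1(x)=\sum_{i=1}^n\gamma'_{i,x_i}$. This is a sum of independent, real-valued, mean-zero random variables $g_i$ under the uniform measure, with $|g_i|\leq M:=\max_{i,a}|\gamma'_{i,a}|$. Since the $g_i$ are orthogonal, $\alpha\eta=\|f_1\|_2^2=\sum_i\mathbb{E}[g_i^2]$. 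If $\alpha=0$ or $\eta=0$ the claim is trivial, since the right-hand side is then $\leq 0$. So assume $\alpha,\eta>0$.

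\textbf{Step 2: lower bound on $\mathbb{E}[f f_1^2]$.} By orthogonality of Fourier levels, $\mathbb{E}[f f_1]=\|f_1\|_2^2=\alpha\eta$. Since $f$ is Boolean, $f=f^2$, so Cauchy--Schwarz gives
$$\alpha\eta=\mathbb{E}[f\cdot f f_1]\leq \mathbb{E}[f]^{1/2}\,\mathbb{E}[f f_1^2]^{1/2}=\sqrt{\alpha}\,\mathbb{E}[f f_1^2]^{1/2}.$$
Therefore $\mathbb{E}[f f_1^2]\geq \alpha\eta^2$.

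\textbf{Step 3: upper bound via the fourth moment.} Applying Cauchy--Schwarz again, $\mathbb{E}[f f_1^2]\leq \sqrt{\alpha}\,\bigl(\mathbb{E}[f_1^4]\bigr)^{1/2}$. Expand $\mathbb{E}[f_1^4]$ using independence and mean zero of the $g_i$. Every term containing some $g_i$ to the first power vanishes, so
$$\mathbb{E}[f_1^4]=\sum_i\mathbb{E}[g_i^4]+3\sum_{i\neq j}\mathbb{E}[g_i^2]\,\mathbb{E}[g_j^2]\leq M^2\sum_i\mathbb{E}[g_i^2]+3\Bigl(\sum_i\mathbb{E}[g_i^2]\Bigr)^2=M^2\alpha\eta+3\alpha^2\eta^2.$$

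\textbf{Step 4: combine.} Steps 2 and 3 give $\alpha^2\eta^4\leq \alpha\bigl(M^2\alpha\eta+3\alpha^2\eta^2\bigr)$. Dividing by $\alpha^2\eta$ yields $\eta^3\leq M^2+3\alpha\eta$, which is exactly $\max_{i,a}(\gamma'_{i,a})^2\geq\eta^3-3\alpha\eta$.

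The only step needing care is Step 1: checking that the level-one projection really decomposes as $\sum_i\gamma'_{i,x_i}$ with independent coordinates, and that these functions are real-valued (true because $f$ is real). Once that is in place, the rest is two applications of Cauchy--Schwarz and the standard fourth-moment expansion. The key choice is to route through $\mathbb{E}[f f_1^2]$, using $f=f^2$, rather than through the third moment. The third-moment route loses a factor and yields only $\eta^4$ in place of $\eta^3$.
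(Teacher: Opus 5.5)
Your proof is correct and is essentially the paper's argument. Your two Cauchy--Schwarz steps through $\mathbb{E}[ff_1^2]$ chain to exactly the paper's H\"older bound $\alpha\eta=\mathbb{E}(ff_1)\le\|f\|_{4/3}\|f_1\|_4$, i.e.\ $\alpha\eta^4\le\mathbb{E}f_1^4$. Your fourth-moment expansion over the independent mean-zero functions $\gamma'_{i,x_i}$ is the paper's Fourier-coefficient computation in probabilistic form, with the same bounds: $M^2\alpha\eta$ for the diagonal terms and $3\alpha^2\eta^2$ for the cross terms.
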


\begin{proof}

Let $\hat{f}_{i,\ell} := \hat{f}(\ell e_i)$ for $\ell \neq 0$ and define $\hat f_{i,0} = 0$ for all $i\in [n]$.

Then for $\ell \neq 0$ 

\begin{equation*}
  \hat f_{i,\ell} = \hat f(\ell e_{i}) = \mathbb{E}(f\omega_{k_i}^{-\ell x_i}) = \sum_{a = 0}^{k_i-1} \frac{\gamma_{i,a}}{k_i}\omega_{k_i}^{-a\ell} = \sum_{a = 0}^{k_i-1} \frac{\gamma'_{i,a}}{k_i}\omega_{k_i}^{-a\ell}.
\end{equation*}

Also, $\hat f_{i,0}=0$ and $\sum_{a=0}^{k_i-1}\gamma'_{i,a}=0$.
Thus for all $\ell \in \Z_{k_i}$ we have $\hat f_{i,\ell} = \sum_{a = 0}^{k_i-1} \frac{\gamma'_{i,a}}{k_i}\omega_{k_i}^{-a\ell}$.

\begin{multline*}
  \alpha\eta=\|f_1\|_2^2 = \sum_{i,\ell\neq 0}|\hat f(\ell e_i)|^2 = \sum_{i,\ell} |\hat f_{i,\ell}|^2 = \sum_{i,\ell, a_1,a_2}\frac{1}{k_i^2}\gamma_{i,a_1}'\gamma_{i,a_2}'\omega_{k_i}^{-\ell(a_1-a_2)} = \sum_{i,a}\frac{1}{k_i}(\gamma_{i,a}')^2.
\end{multline*}

\begin{multline*}
  \mathbb{E}f_1^4 = \mathbb{E}\Big(\sum_{i=1}^n\sum_{\ell\in\Z_{k_i}\setminus\{0\}}\hat f(\ell e_i)\omega_{k_i}^{\ell x_i}\Big)^4 =\\= 3\sum_{i\neq j}\sum_{\ell_1, \ell_2\neq 0}\hat f(\ell_1 e_{i}) \hat f(-\ell_1 e_{i}) \hat f(\ell_2 e_{j}) \hat f(-\ell_2 e_{j}) + \\+\sum_{i}\sum_{\ell_1+\ell_2+\ell_3+\ell_4=0,\;\ell_1,\ldots,\ell_4\neq0} \hat f(\ell_1 e_{i}) \hat f(\ell_2 e_{i}) \hat f(\ell_3 e_{i}) \hat f(\ell_4 e_{i})
\end{multline*}

\begin{equation*}
  \sum_{i\neq j}\sum_{\ell_1, \ell_2\neq 0}\hat f(\ell_1 e_{i}) \hat f(-\ell_1 e_{i}) \hat f(\ell_2 e_{j}) \hat f(-\ell_2 e_{j}) \leq \Big(\sum_{i,\ell}|\hat f(\ell e_i)|^2\Big)^2 = \alpha^2\eta^2;
\end{equation*}

\begin{multline*}
  \sum_{i}\sum_{\ell_1+\ell_2+\ell_3+\ell_4=0,\;\ell_1,\ldots,\ell_4\neq0} \hat f(\ell_1 e_{i}) \hat f(\ell_2 e_{i}) \hat f(\ell_3 e_{i}) \hat f(\ell_4 e_{i}) = \sum_i \sum_{\ell_1, \ell_2, \ell_3}\hat f_{i, \ell_1}\hat{f}_{i,\ell_2}\hat{f}_{i,\ell_3}\hat{f}_{i, -\ell_1 - \ell_2 - \ell_3} =\\ = \sum_{i}\frac{1}{k_i^4}\sum_{\ell_1, \ell_2, \ell_3, a_1,a_2,a_3,a_4} \gamma'_{i,a_1}\gamma'_{i,a_2}\gamma'_{i,a_3}\gamma'_{i,a_4}\omega_{k_i}^{-\ell_1(a_1 - a_4)}\omega_{k_i}^{-\ell_2(a_2 - a_4)}\omega_{k_i}^{-\ell_3(a_3 - a_4)} =\\= \sum_{i} \frac{1}{k_i}\sum_{a}(\gamma'_{i,a})^4 \leq \max_{i,a}(\gamma_{i,a}')^2\sum_{i,a}\frac{1}{k_i}(\gamma_{i,a}')^2 = \alpha\eta\cdot \max_{i,a}(\gamma_{i,a}')^2
\end{multline*}

By H\"older's inequality, $\mathbb{E}(ff_1)\leq\|f\|_{4/3}\|f_1\|_4$. Since $f_1$ is the orthogonal projection of $f$ onto the first Fourier level, $\mathbb{E}(ff_1)=\|f_1\|_2^2=\alpha\eta$. Consequently,
$$\eta^4\alpha^4\leq\alpha^3\mathbb{E}f_1^4.$$

Therefore,

$$\eta^4 \alpha \leq 3\alpha^2\eta^2 + \alpha\eta\cdot \max_{i,a}(\gamma_{i,a}')^2.$$

If $\alpha\eta=0$, the conclusion is immediate. Otherwise, division by $\alpha\eta$ gives
$$\max_{i,a}(\gamma_{i,a}')^2\geq\eta^3-3\alpha\eta.$$
\end{proof}

\begin{Lemma}\label{lem:outdeg}
  Suppose that $|F|=s k^{n-1}$. Let $\mathcal B := [0,k-a_1-1]\times\cdots\times[0,k-a_{\ell-1}-1] \times [0, k-a_\ell-1]\times [0,k-1]^{n-\ell}$ where $a := \sum_{p} a_p \leq s$ and $a_p > 0$; let $F' = F\cap \mathcal B$ and $F'' = F\setminus F'$. Let $\beta := \frac{|F'|}{|\mathcal B|} \geq \frac{|F'|}{k^n} =: \beta'$ and $\alpha'' = \frac{|F''|}{k^n}$. Then 
  $$e(F'', F') \geq |\mathcal{B}|(k-1)^{n-1}(1 - \frac{s}{k-1})\Big(k\alpha'' - \frac{\sqrt{sk}}{k-s-1}\Big)\beta.$$
\end{Lemma}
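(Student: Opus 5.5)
The plan is to write $e(F'',F')$ as a bilinear form and apply a bipartite expander mixing lemma to the biadjacency operator $M$ from $\mathbb{C}^{\mathcal B}$ to $\mathbb{C}^{\Omega}$, where $\Omega=\Z_k^n$. Write $J_{r\times m}$ for the all-ones $r\times m$ matrix and $I_{r\times m}$ for the $r\times m$ matrix with ones in positions $(j,j)$ and zeros elsewhere. The matrix $M$ is the restriction of $B_{k,n}$ to the columns indexed by $\mathcal B$, so it factors as a tensor product
$$M=\bigotimes_{p=1}^{\ell}\bigl(J_{k\times(k-a_p)}-I_{k\times(k-a_p)}\bigr)\otimes\bigl(J_{k}-I_{k}\bigr)^{\otimes(n-\ell)}.$$
This gives $e(F'',F')=\un_{F''}^T M\un_{F'}$, which is the quantity to bound from below.

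\textbf{Step 1 (singular values).} For a single restricted factor with $m$ columns, one computes
$$(J_{k\times m}-I_{k\times m})^T(J_{k\times m}-I_{k\times m})=(k-2)J_m+I_m.$$
Its singular values are therefore $\sqrt{m(k-2)+1}$, with right singular vector the constant vector, and $1$ for every other singular direction. The full factor $J_k-I_k$ has singular values $k-1$ and $1$. Taking tensor products, the top singular value $\sigma_1$ of $M$ has multiplicity one and constant right singular vector $v=\un_{\mathcal B}/\sqrt{|\mathcal B|}$. The second singular value satisfies $\sigma_2\le\sigma_1/\sqrt{(k-s-1)(k-2)}$, with the worst case coming from a restricted coordinate. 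The left top vector $u=Mv/\sigma_1$ is not constant: its entry at $x$ equals $\prod_p(\text{number of allowed }y_p\ne x_p)\cdot(k-1)^{n-\ell}/(\sigma_1\sqrt{|\mathcal B|})$. Every entry is bounded below by its value at a point inside the box. This minimum is where the factor $(k-1)^{n-1}(1-\tfrac{s}{k-1})$ should come from, via $\prod_p\frac{k-a_p-1}{k-1}\ge 1-\frac{a}{k-1}\ge 1-\frac{s}{k-1}$.

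\textbf{Step 2 (mixing).} Decompose $\un_{F'}=\beta\un_{\mathcal B}+g$ with $g\perp\un_{\mathcal B}$. Then
$$e(F'',F')=\beta\,\un_{F''}^TM\un_{\mathcal B}+\un_{F''}^TMg.$$
For the main term, every $x$ has at least $\prod_p(k-a_p-1)(k-1)^{n-\ell}$ neighbours in $\mathcal B$. This gives
$$\beta\,\un_{F''}^TM\un_{\mathcal B}\ge\beta\,|F''|(k-1)^{n}\Bigl(1-\frac{s}{k-1}\Bigr)\cdot\frac{\prod_p(k-a_p)}{\prod_p(k-1)}\cdot\frac{1}{k-1}.$$
Rewriting $|F''|=k\alpha''k^{n-1}$ and $|\mathcal B|=k^{n-\ell}\prod_p(k-a_p)$ produces the term $|\mathcal B|(k-1)^{n-1}(1-\frac{s}{k-1})k\alpha''\beta$. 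For the error term, use $|\un_{F''}^TMg|\le\sigma_2\|\un_{F''}\|\,\|g\|$ with $\|g\|^2\le|F'|(1-\beta)$ and $|F''|\le|F|=sk^{n-1}$.

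\textbf{Main obstacle.} The error from Step 2 comes out as $\sigma_2\sqrt{|F''|\,|F'|}$, which scales like $\sqrt\beta$. The statement, however, requires an error linear in $\beta$, namely $\frac{\sqrt{sk}}{k-s-1}\beta$. Two easy reductions come first. If $k\alpha''\le\sqrt{sk}/(k-s-1)$, the right-hand side is nonpositive and there is nothing to prove. Otherwise, I would bound the error more carefully by applying the mixing estimate from the $F'$ side: $\|M^T\un_{F''}-(\text{top part})\|\le\sigma_2\sqrt{|F''|}$, paired against $\un_{F'}$ through Cauchy–Schwarz restricted to the support $F'$. If this still only gives $\sqrt\beta$, I would try exploiting $|F''|\le sk^{n-1}$ together with $\beta\gtrsim|F'|/k^n$, using $|F'|+|F''|=sk^{n-1}$, so that $\sqrt{|F''||F'|}$ can be compared with $|F'|\sqrt{s/k}\cdot k$ in the relevant regime. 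I expect this linearisation, and matching the precise constant $\sqrt{sk}/(k-s-1)$, to be the delicate part. The spectral computation itself is routine.
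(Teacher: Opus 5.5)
There is a genuine gap, and it is the one you flag yourself. Your error term $|\un_{F''}^TMg|\leq\sigma_2\|\un_{F''}\|\,\|g\|$ is of order $\sigma_2\sqrt{sk^{n-1}|F'|}$, which scales like $\sqrt{\beta}$. The remedies you sketch do not fix this. Comparing $\sqrt{|F''||F'|}$ with $k\sqrt{s/k}\,|F'|$ only works once $|F'|$ is at least of order $k^{n-2}$. But the lemma is needed precisely for tiny $F'$: in Lemma~\ref{lem-jumps}, $\beta>0$ may be as small as $1/|\mathcal B|$, and one needs $e(F'',F')\geq c|F'|$ to find a single vertex of large degree.

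There is also a second, independent loss. Your $M$ has rows indexed by all of $\Omega$, and its second singular value is about $(k-1)^{n-1}$. For $\ell=1$ this singular value comes from vectors oscillating in the restricted coordinate, which $M$ maps onto rows inside $\mathcal B$, where $F''$ has no vertices. So even a linear bound $\sigma_2(M)|F'|$ would give an error of size $(k-1)^{n-1}|F'|$. The statement requires much less, since $|\mathcal B|(k-1)^{n-1}(1-\frac{s}{k-1})\frac{\sqrt{sk}}{k-s-1}\beta=\sqrt{sk}\,(k-1)^{n-2}|F'|$.

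The missing idea is that $F''$ almost fills the complement of $\mathcal B$. Since $a\leq s$, we have $|\Omega\setminus\mathcal B|\leq ak^{n-1}\leq |F|$. Hence $H:=(\Omega\setminus\mathcal B)\setminus F''$ satisfies $|H|\leq |F'|$.
\begin{itemize}
\item Every $y\in\mathcal B$ has the same number of neighbours outside $\mathcal B$, so $\un_{\Omega\setminus\mathcal B}^TMg=0$.
\item It follows that $\un_{F''}^TMg=-\un_H^TM'g$, where $M'$ is the biadjacency matrix between $\Omega\setminus\mathcal B$ and $\mathcal B$.
\item The top right singular vector of $M'$ is still $\un_{\mathcal B}$, and its second singular value is of order $\sqrt{a/k}\,(k-1)^{n-1}$. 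For $\ell=1$ it is exactly $\sqrt{a(k-a)}\,(k-1)^{n-2}\leq\sqrt{sk}\,(k-1)^{n-2}$.
\item Therefore $|\un_{F''}^TMg|\leq\sigma_2(M')\sqrt{|H|}\,\|g\|\leq\sigma_2(M')\,|F'|$, which is linear in $\beta$ and of the required size.
\end{itemize}

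The paper implements the same idea in a different form. It splits $\Omega\setminus\mathcal B$ into the boxes $\mathcal A_p$ and applies the bipartite mixing lemma to each pair $(F''\cap\mathcal A_p,F')$, keeping the factor $\sqrt{1-\alpha_p}$. It then uses $\sum_p a_p\alpha_p\geq k\alpha''\geq a-k\beta$ together with concavity to get $\sum_p a_p\sqrt{1-\alpha_p}\leq\sqrt{sk\beta}$. This turns $\sqrt{\beta}$ into $\sqrt{sk}\,\beta$.

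Separately, your main-term display has a stray factor $\frac{1}{k-1}$. As written it is weaker than the target by $k^{\ell-1}/(k-1)^{\ell}$, so the rewriting does not produce the stated term. The neighbour count $\prod_p(k-a_p-1)(k-1)^{n-\ell}$ you give just before it is what you should use instead.
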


\begin{proof}
  
Let $\mathcal{A}_p = [0,k-a_1-1]\times\cdots\times[0,k-a_{p-1}-1] \times [k-a_p, k-1]\times [0,k-1]^{n-p}$ for each $p \in [\ell]$. Note that $\mathcal{A}_p$ are pairwise disjoint and $[0,k-1]^n\setminus \mathcal{B} = \bigsqcup_{p = 1}^{\ell}\mathcal{A}_p$. The maximal bipartite subgraph of $K_k^{\otimes n}$ with parts $\mathcal{A}_p$ and $\mathcal{B}$ has matrix 
\begin{multline*}
  A_p = (J_{k-a_1}-E_{k-a_1})\otimes\cdots\otimes (J_{k-a_{p-1}}-E_{k-a_{p-1}})\otimes J_{a_p,k-a_p}\otimes \\\otimes \begin{pmatrix}
  J_{k-a_{p+1}} - E_{k-a_{p+1}}\\
  J_{a_{p+1}, k-a_{p+1}}
\end{pmatrix} \otimes\cdots\otimes \begin{pmatrix}
  J_{k-a_{\ell}} - E_{k-a_{\ell}}\\
  J_{a_{\ell}, k-a_{\ell}}
\end{pmatrix}\otimes (J_k-E_k)^{\otimes (n-\ell)}
\end{multline*}
where $E_a$ is the $a\times a$ identity matrix, $J_a$ is the $a\times a$ all-ones matrix, and $J_{a,b}$ is the $a\times b$ all-ones matrix.
Then 
\begin{multline*}
  A_p^T A_p = (E_{k-a_1} + (k-a_1-2)J_{k-a_1})\otimes\cdots\otimes (E_{k-a_{p-1}} + (k-a_{p-1}-2)J_{k-a_{p-1}})\otimes a_p J_{k-a_p}\otimes\\ \otimes (E_{k-a_{p+1}}+(k-2)J_{k-a_{p+1}})\otimes\cdots\otimes(E_{k-a_{\ell}}+(k-2)J_{k-a_{\ell}})\otimes\bigl(E_k+(k-2)J_k\bigr)^{\otimes(n-\ell)}.
\end{multline*} Consequently, the nonzero singular values of $A_p$ are $$\prod_{i = 1}^{p-1}(k-a_i-1)^{\epsilon_i}\cdot \sqrt{a_p(k-a_p)}\cdot \prod_{i=p+1}^\ell \sqrt{1 + (k-2)(k-a_i)}^{\epsilon_i}\cdot (k-1)^d, \,\epsilon_i \in \{0,1\},\, d\in\{0,\ldots,n-\ell\}.$$ The largest singular value $\sigma_1$ has multiplicity $1$, and so the second largest singular value is $\sigma_2 = \frac{\sigma_1}{\min(\min_{i=1}^{p-1}(k-a_i-1), \min_{i=p+1}^{\ell}\sqrt{1 + (k-2)(k-a_i)}, k-1)}\leq \frac{\sigma_1}{k-s-1}$. Also 

\begin{multline*}
  e(\mathcal{A}_p, \mathcal{B}) = \prod_{i=1}^{p-1}(k-a_i)(k-a_i-1)\cdot a_p(k-a_p)\cdot\prod_{i=p+1}^{\ell}(k-a_i)(k-1)\cdot(k(k-1))^{n-\ell} \geq \\ \ge a_p|\mathcal B|(k-1)
^{n-1}\prod_{i=1}^{\ell}(1-\frac{a_i}{k-1}) \geq a_p|\mathcal{B}|(k-1)
^{n-1}(1 - \frac{\sum_{i}a_i}{k-1}) \geq\\\geq a_p|\mathcal{B}|(k-1)
^{n-1}(1 - \frac{s}{k-1})
\end{multline*}

Let $F_p:=F''\cap\mathcal A_p$ and $\alpha_p := \frac{|F_p|}{|\mathcal A_p|}$. Then by the expander mixing lemma for bipartite graphs, 

$$\frac{e(F_p, F')}{e(\mathcal{A}_p, \mathcal{B})} \geq \alpha_p\beta - \frac{\sigma_2}{\sigma_1}\sqrt{\alpha_p \beta (1-\alpha_p)(1-\beta)} \ge \alpha_p\beta - \frac{1}{k-s-1}\sqrt{ \beta (1-\alpha_p)}.$$

Consequently, $$e(F'', F') \geq |\mathcal{B}|(k-1)^{n-1}(1 - \frac{s}{k-1})\Big(\beta\sum_{p = 1}^{\ell}a_p \alpha_p - \frac{\sqrt{\beta}}{k-s-1}\sum_{p = 1}^{\ell}a_p\sqrt{(1-\alpha_p)}\Big).$$

Note that $\sum_{p=1}^\ell a_p \alpha_p \geq \frac{\sum_{p}|F_p|}{k^{n-1}} = \frac{|F| - |F'|}{k^{n-1}}= k\alpha'' = s - k\beta' \geq s - k\beta \geq a - k\beta$. 

Since $x\mapsto\sqrt{x}$ is concave,
$$
\sum_p a_p\sqrt{1-\alpha_p}
\leq a\sqrt{\sum_p\frac{a_p}{a}(1-\alpha_p)}
=a\sqrt{1-\frac{\sum_p a_p\alpha_p}{a}}
\leq\sqrt{ak\beta}
\leq\sqrt{sk\beta}.
$$

Consequently, $$e(F'', F') \geq |\mathcal{B}|(k-1)^{n-1}(1 - \frac{s}{k-1})\Big(k\alpha'' - \frac{\sqrt{sk}}{k-s-1}\Big)\beta,$$
as desired. 

\end{proof}

\begin{Lemma}\label{lem-frac-get-bigger}
  Suppose that $|F| = s \cdot k^{n-1}$, $\Delta(K_k^{\otimes n}[F]) \leq \frac{s^2}{s+1}(k-1)^{n-1}$ and for some $i \in [n]$ and $b \in \Z_k$ we have $\gamma = \frac{|F\cap \{x_i = b\}|}{k^{n-1}} \geq c > 0$. Then $\gamma \geq \frac{s}{s+1} - \sqrt{\frac{s}{c(k-1)}}$.
\end{Lemma}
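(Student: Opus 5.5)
We plan to double count the edges between the heavy slice and the rest of $F$, with the lower bound coming from the bipartite expander mixing lemma. This is the same mechanism as Lemma~\ref{lem:outdeg}, specialised to a single slice of width one. By symmetry of $K_k^{\otimes n}$ under permuting coordinates and permuting symbols within a coordinate, we may assume $i=1$ and $b=k-1$.

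Set $\mathcal A:=\{x_1=k-1\}$ and $\mathcal B:=\{x_1\neq k-1\}$, so that $\mathcal B$ is the box of Lemma~\ref{lem:outdeg} with $\ell=1$ and $a_1=1$. Let $S:=F\cap\mathcal A$ and $T:=F\cap\mathcal B$. Then $|S|=\gamma k^{n-1}$, $|T|=(s-\gamma)k^{n-1}$, and their densities in the two parts are $\gamma$ and $\beta:=\frac{s-\gamma}{k-1}\in[0,1]$.

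\textbf{Upper bound.} Every vertex of $S$ has at most $D:=\frac{s^2}{s+1}(k-1)^{n-1}$ neighbours in $F$. Hence
$$e(S,T)\leq D|S|=\frac{s^2}{s+1}(k-1)^{n-1}\gamma k^{n-1}.$$

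\textbf{Lower bound.} The bipartite graph of all edges between $\mathcal A$ and $\mathcal B$ has matrix $J_{1,k-1}\otimes(J_k-E_k)^{\otimes(n-1)}$. Its singular values are $\sqrt{k-1}\,(k-1)^d$ for $d=0,\ldots,n-1$, the top one has multiplicity one, and $\sigma_2/\sigma_1=1/(k-1)$. The total number of edges is $e(\mathcal A,\mathcal B)=k^{n-1}(k-1)^n$. The bipartite expander mixing lemma, after dropping the factors $(1-\gamma)$ and $(1-\beta)$, gives
$$e(S,T)\geq k^{n-1}(k-1)^n\Big(\gamma\beta-\frac{1}{k-1}\sqrt{\gamma\beta}\Big).$$

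\textbf{Combining.} We compare the two bounds and divide by $\gamma k^{n-1}(k-1)^{n-1}$, which is positive since $\gamma\geq c>0$. This yields
$$\frac{s^2}{s+1}\geq (s-\gamma)-\sqrt{\frac{s-\gamma}{\gamma(k-1)}}.$$
Using $s-\gamma\leq s$ and $\gamma\geq c$ in the square root gives
$$\gamma\geq s-\frac{s^2}{s+1}-\sqrt{\frac{s}{c(k-1)}}=\frac{s}{s+1}-\sqrt{\frac{s}{c(k-1)}},$$
which is the claim.

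The only real work is the spectral computation for the cross bipartite graph: identifying the singular values and checking that the top one is simple. This is the $\ell=1$ case of the computation in the proof of Lemma~\ref{lem:outdeg}, so it can be quoted from there. Beyond that, one should check that $\beta\leq1$, which holds because $T\subseteq\mathcal B$; the argument is otherwise routine.
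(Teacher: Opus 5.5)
Your proposal is correct and follows essentially the same route as the paper. The paper also applies the bipartite expander mixing lemma to the cut between $\{x_i=b\}$ and $\{x_i\neq b\}$ (singular-value ratio $1/(k-1)$, $(k-1)^n k^{n-1}$ edges). It compares the result with the bound $e(F',F'')\leq \Delta\cdot|F'|$ coming from the maximum degree, and finally bounds $\sqrt{(s-\gamma)/(\gamma(k-1))}$ by $\sqrt{s/(c(k-1))}$.
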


\begin{proof}
  Let $F':=F\cap\{x_i=b\}$ and $F'':=F\setminus F'$, and set $\gamma'':=|F''|/((k-1)k^{n-1})$. Then $\gamma+(k-1)\gamma''=|F|/k^{n-1}=s$. Apply the bipartite expander mixing lemma to the subgraph with parts $\{x_i=b\}$ and $\{x_i\neq b\}$. Its largest and second-largest singular values have ratio $1/(k-1)$, and it has $(k-1)^n k^{n-1}$ edges. We obtain 

  $$e(F', F'') \geq (k-1)^{n}k^{n-1}(\gamma\gamma'' - \frac{1}{k-1}\sqrt{\gamma\gamma''(1-\gamma)(1-\gamma'')}) \geq |F'|(k-1)^{n-1}(\gamma''(k-1) - \sqrt{\gamma''/\gamma}), $$
  and so $$\frac{s^2}{s+1} \geq \frac{\Delta(K_k^{\otimes n}[F])}{(k-1)^{n-1}} \geq \gamma''(k-1) - \sqrt{\gamma''/\gamma} \geq s - \gamma - \sqrt{\frac{s}{c(k-1)}}.$$
  This proves the desired bound.
\end{proof}

\begin{Lemma}\label{lem-many-dense-stars}
  Suppose that $|F|=s k^{n-1}$ and $\Delta(K_k^{\otimes n}[F])\leq\frac{s^2}{s+1}(k-1)^{n-1}$. Then there are at least $s$ pairs $(i_j,q_j)\in[n]\times\Z_k$ such that
  $$\frac{|F\cap\{x_{i_j}=q_j\}|}{k^{n-1}}\geq\frac{s}{s+1}-4\sqrt{\frac{s}{k-1}}.$$
\end{Lemma}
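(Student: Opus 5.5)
The plan is to find the $s$ dense stars one at a time. At each step a spectral (average-degree) argument shows that the current set carries a lot of first-level Fourier mass. Lemma~\ref{lem-big-comp} turns that mass into a star of constant density. Lemma~\ref{lem-frac-get-bigger} then boosts that density to $\frac{s}{s+1}-O(\sqrt{s/k})$.

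\textbf{First star.} Write $f=\un_F$ and $\alpha=s/k$. Expand $f^TB_{k,n}f$ in the Fourier basis, using that the eigenvalue on level $\ell$ is $(-1)^\ell(k-1)^{n-\ell}$. The first level contributes $-(k-1)^{n-1}\|f_1\|_2^2$. Every level $\geq2$ has eigenvalue of absolute value at most $(k-1)^{n-2}$. Compare this with $2|E|\leq\frac{s^2}{s+1}(k-1)^{n-1}|F|$ and divide by $\alpha(k-1)^{n-1}k^n$. This gives
$$\frac{\|f_1\|_2^2}{\alpha}=\eta\geq \alpha\frac{(k-1)}{1}\cdot\frac{1}{\alpha}\alpha-\frac{s^2}{s+1}-O\Big(\frac{1}{k}\Big)=\frac{s}{s+1}-O\Big(\frac{s}{k}\Big).$$
In particular $\eta\geq\frac12-o(1)$. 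Since $\alpha=s/k\leq 1/(1000s)$ is tiny, Lemma~\ref{lem-big-comp} gives some $(i,q)$ with $(\gamma'_{i,q})^2\geq\eta^3-3\alpha\eta\geq 0.1$. Hence $\gamma_{i,q}\geq 0.3$, which is the constant $c$ in Lemma~\ref{lem-frac-get-bigger}. That lemma then yields
$$\frac{|F\cap\{x_i=q\}|}{k^{n-1}}\geq\frac{s}{s+1}-\sqrt{\frac{s}{0.3(k-1)}}\geq \frac{s}{s+1}-4\sqrt{\frac{s}{k-1}}.$$

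\textbf{Induction step.} Suppose $j<s$ distinct dense stars $(i_1,q_1),\dots,(i_j,q_j)$ have been found. Relabel values so that the removed values in each coordinate are the top ones, and let $\mathcal B$ be the box avoiding all these stars, exactly as in Lemma~\ref{lem:outdeg}, with $a=j\leq s$. Put $F'=F\cap\mathcal B$ and $F''=F\setminus F'$. The stars already cover about $j\frac{s}{s+1}k^{n-1}$ vertices, so $k\alpha''\gtrsim j\frac{s}{s+1}$. Since $j<s$, the set $F'$ still has mass about $\big(s-j\frac{s}{s+1}\big)k^{n-1}\geq\frac{s}{s+1}k^{n-1}$.

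Lemma~\ref{lem:outdeg} shows that vertices of $F'$ receive on average at least $\big(j\frac{s}{s+1}-O(\sqrt{s/k})\big)(k-1)^{n-1}$ edges from $F''$. Therefore the average degree of the induced graph on $F'$ is at most $\big(\frac{s^2}{s+1}-j\frac{s}{s+1}+O(\sqrt{s/k})\big)(k-1)^{n-1}$. The graph on $\mathcal B$ is the tensor product $\bigotimes_p K_{k-a_p}\otimes K_k^{\otimes(n-\ell)}$, and its spectrum is again given by Fourier levels on $\Omega=\prod\Z_{k_i}$.

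Repeat the spectral computation there. The new density satisfies $\alpha_{\mathcal B}\approx(s-j\frac{s}{s+1})/k$, and the average-degree deficit matches exactly the deficit in the first step, with $s$ replaced by $s-j\frac{s}{s+1}$. So we again get $\eta\geq\frac12-o(1)$ and $\alpha_{\mathcal B}=O(s/k)$. Lemma~\ref{lem-big-comp}, which was stated for general products $\Z_{k_1}\times\cdots\times\Z_{k_n}$ for exactly this purpose, produces a star $\{x_i=q\}$ inside $\mathcal B$ of relative density $\geq0.3$. This star is new, since the value $q$ is not among those removed. Because $|\mathcal B|\geq(1-s/k)k^n$, the star has density $\geq c$ in the sense of the whole space as well. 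Lemma~\ref{lem-frac-get-bigger}, applied to the original $F$, upgrades it to the claimed bound. Repeat until $s$ pairs have been found.

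\textbf{Main obstacle.} The delicate part is the bookkeeping in the induction step. The error terms from Lemma~\ref{lem:outdeg}, namely $\sqrt{sk}/(k-s-1)$ and $1-s/(k-1)$, together with the level-$\geq2$ eigenvalue bound in the non-uniform box, must keep $\eta$ bounded away from $0$ by an absolute constant, uniformly in $j<s$. One must also check that the spectral gap of the box graph is still of order $1/(k-s)$. The final density bound itself is not affected by these accumulated errors, because it always comes from one clean application of Lemma~\ref{lem-frac-get-bigger} with $c=0.3$. That application gives error at most $\sqrt{s/(0.3(k-1))}<4\sqrt{s/(k-1)}$.
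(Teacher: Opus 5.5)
\textbf{Where the induction step breaks.} Your overall strategy is the paper's. You remove stars to obtain the box $\mathcal B$ of Lemma~\ref{lem:outdeg}, and combine its out-edge bound with the spectral lower bound on edges inside $F'$ to force first-level mass $\eta$ of constant size. You then extract a star with Lemma~\ref{lem-big-comp} and upgrade it with Lemma~\ref{lem-frac-get-bigger}. However, the induction step as you wrote it does not actually deliver the bound on $\eta$.

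You lower-bound $k\alpha''$ by roughly $j\frac{s}{s+1}$ using the densities of the stars already found. You then treat $|F'|/k^{n-1}$ as roughly $s-j\frac{s}{s+1}$. Only an upper bound of that kind is justified: an earlier star may have density close to $1$, so all you know is $|F'|\geq (s-j)k^{n-1}$. Suppose you insert the one-sided estimate $k\alpha''\gtrsim j\frac{s}{s+1}$ into the degree bound and pair it with the true density of $F'$ in the spectral bound. You then only get $\eta\gtrsim\frac{s-j}{s+1}$, which for $j=s-1$ is $\frac{1}{s+1}$. That is not bounded below by an absolute constant. Lemma~\ref{lem-big-comp} then no longer produces a star of constant density, and the final error is no longer $4\sqrt{s/(k-1)}$. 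So the claim that ``the deficit matches exactly'' is right, but not for the reason you give.

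\textbf{The fix, as in the paper.} Keep $\alpha''$ exact. Lemma~\ref{lem:outdeg} bounds the edges from $F''$ in terms of the actual $k\alpha''$. The spectral bound on $e(F',F')$ is in terms of the actual density of $F'$. The identity $|F'|/k^{n-1}+k\alpha''=s$ holds exactly; in the paper's notation this is $(k-1)\beta'+k\alpha''=s-\beta'$. So $k\alpha''$ cancels, and $\eta\geq\frac{s}{s+1}-\frac{4s^2}{k}-3\sqrt{s/k}\geq\frac14$ regardless of how dense the removed stars are.

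Because those densities never enter, the paper needs no induction. It fixes an \emph{arbitrary} set of $s-1$ pairs, forms $\mathcal B$ with $a=s-1$, and finds a dense pair outside that set. This immediately forces at least $s$ dense pairs.

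The rest of your argument matches the paper: the first step, the transfer from the box to the whole space via $|\mathcal B|/k_i\geq(1-s/k)k^{n-1}$, and a single application of Lemma~\ref{lem-frac-get-bigger} to the original $F$. You should add the remark that $\gamma'_{i,b}\geq-\beta$, so the large deviation given by Lemma~\ref{lem-big-comp} is positive. Any whole-space density $c\geq\frac1{16}$ already suffices for the constant $4$.
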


\begin{proof}

Fix any $s-1$ pairs $\{(i_j,q_j)\}$. We will find a different pair $(i,q)$ such that
$$\frac{|F\cap\{x_i=q\}|}{k^{n-1}}\geq\frac{s}{s+1}-4\sqrt{\frac{s}{k-1}}.$$

Let $\mathcal B=\{x:x_{i_j}\neq q_j\ \text{for every }j\}$. We can without loss of generality assume that $\mathcal B$ is as in Lemma~\ref{lem:outdeg} with $a = s - 1$. From now on we use the notation from this lemma. Let $f'$ be the characteristic function of $F'$ in $\mathcal B$. Set $\|f'_1\|_2^2=\eta\beta$. Then 
\begin{multline*}
  e(F',F') \geq \prod_{p = 1}^\ell (k-a_p)\cdot k^{n - \ell} \prod_{p = 1}^\ell (k-1-a_p)\cdot (k-1)^{n - \ell}(\beta^2 - \frac{1}{k-1-\max a_p}\eta \beta - \frac{1}{(k-1-\max a_p)^2}\beta) \geq \\ \geq |\mathcal B|(k-1)^{n}(1 - \frac{s}{k-1})(\beta - \frac{1}{k-1-s}\eta - \frac{1}{(k-1-s)^2})\beta \geq\\\geq |\mathcal B|(k-1)^{n}(1 - \frac{s}{k-1})(\beta' - \frac{1}{k-1-s}\eta - \frac{1}{(k-1-s)^2})\beta.
\end{multline*}

Thus 
\begin{multline*}
  e(F',F) \geq |\mathcal B|(k-1)^{n-1}(1 - \frac{s}{k-1})(\beta'(k-1) - \eta\frac{k-1}{k-s-1} - \frac{k-1}{(k-s-1)^2} + k\alpha'' - \frac{\sqrt{sk}}{k-s-1})\beta \geq\\\geq |F'|(k-1)^{n-1}(k\alpha - \frac{4s^2}{k} - 3\sqrt{\frac{s}{k}} - \eta) = |F'|(k-1)^{n-1}(s - \frac{4s^2}{k} - 3\sqrt{\frac{s}{k}}- \eta).
\end{multline*}
Consequently, 
$$\frac{s^2}{s+1}(k-1)^{n-1} \geq \Delta(K_{k}^{\otimes n}[F]) \geq (k-1)^{n-1}(s - \frac{4s^2}{k} - 3\sqrt{\frac{s}{k}}- \eta),$$
and so 
$$\eta \geq \frac{s}{s+1} - \frac{4s^2}{k} - 3\sqrt{\frac{s}{k}} \geq \frac{1}{4}.$$

Since $|\mathcal B|\geq(1-(s-1)/k)k^n$, we have
$$\beta=\frac{|F'|}{|\mathcal B|}\leq\frac{s}{k-s+1}\leq\frac{1}{999}.$$
Lemma~\ref{lem-big-comp} therefore gives $i\in[n]$ and $b\in\Z_{k_i}$, where $k_i=k-a_i$ for $i\leq\ell$ and $k_i=k$ otherwise, such that
$$|\gamma'_{i,b}|\geq\sqrt{\frac{1}{64}-\frac{3\beta}{4}}\geq\frac19.$$
Because $\gamma'_{i,b}=\gamma_{i,b}-\beta\geq-\beta>-1/9$, this deviation must be positive. Hence $\gamma_{i,b}\geq\gamma'_{i,b}\geq1/9$, and
$$\frac{|F\cap\{x_i=b\}|}{k^{n-1}}
\geq\frac{\gamma_{i,b}|\mathcal B|}{k_i k^{n-1}}
\geq\frac19\left(1-\frac{s}{k}\right)\geq\frac1{16}.$$
Lemma~\ref{lem-frac-get-bigger} now yields the desired bound.

\end{proof}

\begin{Lemma}\label{lem-jumps}
$$\Delta_{K_k^{\otimes n}}(s\cdot k^{n-1}+1) \geq (k-1)^{n-1}\Bigg(\frac{s^2}{s+1} - 7\sqrt{\frac{s^3}{k-1}}\Bigg)$$
\end{Lemma}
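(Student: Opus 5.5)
We argue by contradiction, following the pattern of the Kneser-graph argument. Let $F$ have $|F|=sk^{n-1}+1$ and suppose $\Delta(K_k^{\otimes n}[F])< (k-1)^{n-1}\bigl(\frac{s^2}{s+1}-7\sqrt{s^3/(k-1)}\bigr)$. Delete one vertex to get $|F|=sk^{n-1}$; the degree hypothesis of Lemma~\ref{lem-many-dense-stars} then holds. That lemma gives $s$ distinct pairs $(i_j,q_j)$, $j=1,\ldots,s$. Each slice satisfies
$$\frac{|F\cap\{x_{i_j}=q_j\}|}{k^{n-1}}\geq\frac{s}{s+1}-4\sqrt{\frac{s}{k-1}}.$$

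Next we count where $F$ lives. Let $U=\bigcup_j\{x_{i_j}=q_j\}$. Distinct slices overlap in at most $k^{n-2}$ points. By inclusion--exclusion,
$$|F\cap U|\geq s k^{n-1}\Bigl(\frac{s}{s+1}-4\sqrt{\tfrac{s}{k-1}}\Bigr)-\binom{s}{2}k^{n-2}.$$
The original set (before deletion) has $|F\setminus U|\geq sk^{n-1}+1-|F\cap U|$, which is roughly $\frac{s}{s+1}k^{n-1}$. So a substantial mass lies outside the $s$ dense slices, namely $F''=F\setminus U$.

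We then bound degrees from below. Apply Lemma~\ref{lem:outdeg}, with $\mathcal B$ the complement of $U$ (the pairs may share coordinates, but after relabeling $\mathcal B$ has the product form used there, with $a=s$). This time the roles are reversed: we need edges from $F'=F\cap\mathcal B$ into $F''=F\cap U$, the dense part. Lemma~\ref{lem:outdeg} gives
$$e(F'',F')\geq |\mathcal B|(k-1)^{n-1}\Bigl(1-\frac{s}{k-1}\Bigr)\Bigl(k\alpha''-\frac{\sqrt{sk}}{k-s-1}\Bigr)\beta .$$
Dividing by $|F'|=\beta|\mathcal B|$ gives the average number of $F$-neighbours of a vertex of $F'$. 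It is at least $(k-1)^{n-1}(1-\frac{s}{k-1})(k\alpha''-\sqrt{sk}/(k-s-1))$, where $k\alpha''=|F''|/k^{n-1}\geq s(\frac{s}{s+1}-4\sqrt{s/(k-1)})-s^2/k$. This is at least $(k-1)^{n-1}\bigl(\frac{s^2}{s+1}-4s\sqrt{s/(k-1)}-O(s^2/k)-O(\sqrt{s/k})\bigr)$. The error terms are each bounded by a multiple of $\sqrt{s^3/(k-1)}$ because $k\geq 1000 s^2$. We also need $F'\neq\emptyset$, which holds since $|F\cap U|<sk^{n-1}+1$ after accounting for overlaps. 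Collecting errors gives an error term below $7\sqrt{s^3/(k-1)}$, a contradiction.

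The main obstacle is the tightness of the bookkeeping. It has two parts. First, the overlap between slices and the deletion of one vertex must be handled carefully, to guarantee that $F'$ is nonempty and that the count at exactly $sk^{n-1}+1$ exceeds the extremal configuration. This is delicate because the extremal example (the $s$ full slices, where $F'$ is empty) has density exactly $s$. So an alternative is needed: pick a vertex of $F'$ and lower-bound its degree directly by the slice densities, using expander mixing per slice as in Lemma~\ref{lem-frac-get-bigger}. Second, the constant $7$ must absorb the $4$ from Lemma~\ref{lem-many-dense-stars} plus the $\sqrt{sk}/(k-s-1)$ and $s^2/k$ terms. I would first try the averaging route above. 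If the average over $F'$ is too weak because $F'$ is tiny, I would fall back to this single-vertex bound, using that every $x\notin U$ has about $(k-1)^{n-1}$ neighbours in each slice, a $\frac{s}{s+1}$-fraction of them in $F$ up to mixing errors.
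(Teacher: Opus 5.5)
Your route is the paper's: Lemma~\ref{lem-many-dense-stars} applied after deleting one vertex, the inclusion--exclusion lower bound on $|F\cap U|$, then Lemma~\ref{lem:outdeg} averaged over $F'=F\cap\mathcal B$. The error terms are absorbed into the constant $7$ using $k\geq1000s^2$.

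\textbf{The gap: $F'\neq\emptyset$.} The one step you leave open is $F'\neq\emptyset$, and as written it is a real gap. The lemmas you invoke are stated for the set of size $sk^{n-1}$. Your justification ($|U|\leq sk^{n-1}$) only shows that the \emph{undeleted} set meets $\mathcal B$. For the deleted set, $F'$ can be empty, as you yourself observe. Your proposed fallback does not repair this. The expander mixing lemma bounds edge counts between two sets and says nothing about the number of neighbours of a prescribed vertex $x\notin U$. So only the averaged bound over all of $F'$ is available, and it needs $F'\neq\emptyset$.

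\textbf{How to close it.} Suppose $F\cap\mathcal B=\emptyset$. Then $F\subseteq U$ and $|U|\leq sk^{n-1}=|F|$ force $F=U$ with the $s$ slices pairwise disjoint. Hence all pairs share one coordinate $i$, and $F=\{x:x_i\in Q\}$ with $|Q|=s$. The deleted vertex $v$ then has $v_i\notin Q$. It is therefore adjacent to all $s(k-1)^{n-1}$ vertices $y\in F$ with $y_j\neq v_j$ for $j\neq i$, contradicting the assumed degree bound. This is exactly how the paper obtains $\beta>0$.

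Alternatively, apply Lemma~\ref{lem:outdeg} directly to the undeleted set. Its proof uses $|F|=sk^{n-1}$ only through $\sum_p a_p\alpha_p\geq s-k\beta$, which persists for larger sets. Enlarging the set also only increases the slice densities.

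\textbf{A wrong inequality.} Your second paragraph claims that $|F\setminus U|$ is roughly $\frac{s}{s+1}k^{n-1}$, but the inequality runs the wrong way. A lower bound on $|F\cap U|$ gives only an upper bound on $|F\setminus U|$, which is $0$ in the extremal configuration. Nothing later depends on it, so it should simply be removed.
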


\begin{proof}
  Let $F_+$ have size $sk^{n-1}+1$ and suppose that $\Delta(K_k^{\otimes n}[F_+])\leq\frac{s^2}{s+1}(k-1)^{n-1}$. Delete one vertex and denote the resulting set by $F$. If $F$ were isomorphic to $[0,s-1]\times\Z_k^{n-1}$, then the deleted vertex would have $s(k-1)^{n-1}$ neighbors in $F$, contradicting the assumed bound on $F_+$. Thus this case does not occur. Let $(i_j, q_j)$ be the $s$ pairs guaranteed by Lemma~\ref{lem-many-dense-stars}. Let $\mathcal B := \{x_{i_j} \neq q_{j} \, \forall j\}$. We can without loss of generality assume that $\mathcal B$ is as in Lemma~\ref{lem:outdeg} with $a = s$, and we shall use notation from that lemma. Note that since $F$ is not isomorphic to $[0,s-1]\times\Z_k^{n-1}$, $\beta > 0$. Also we have that
  \begin{multline*}
  |F''| \geq \sum_{j = 1}^s|F \cap \{x_{i_j} = q_{j}\}| - \sum_{t < j}|F \cap \{x_{i_j} = q_{j}\}\cap \{x_{i_t} = q_{t}\}| \geq\\\geq sk^{n-1}(\frac{s}{s+1} - 4\sqrt{\frac{s}{k-1}}) - \frac{s^2}{2}k^{n-2} \geq sk^{n-1}(\frac{s}{s+1} - 5\sqrt{\frac{s}{k-1}}).
  \end{multline*}
  Thus, by Lemma~\ref{lem:outdeg},
  \begin{multline*}
  e(F'', F') \geq |F'|(k-1)^{n-1}(1 - \frac{s}{k-1})\Big(k\alpha'' - \frac{\sqrt{sk}}{k-s-1}\Big) \geq \\\geq |F'|(k-1)^{n-1}(1 - \frac{s}{k-1})\Big(\frac{s^2}{s+1} - 5\sqrt{\frac{s^3}{k-1}} - \frac{\sqrt{sk}}{k-s-1}\Big) \geq\\\geq |F'|(k-1)^{n-1}\Bigg(\frac{s^2}{s+1} - 7\sqrt{\frac{s^3}{k-1}}\Bigg), 
  \end{multline*}
  and so $$\Delta(K_{k}^{\otimes n}[F]) \geq (k-1)^{n-1}\Bigg(\frac{s^2}{s+1} - 7\sqrt{\frac{s^3}{k-1}}\Bigg).$$
\end{proof}

\begin{Claim}\label{cl:easy-prob-constr-k}
  Suppose that $\Delta_{K_k^{\otimes n}}(m)=d$ with $d\geq2$. Then, for every $p\in(0,1)$,
  $$\Delta_{K_k^{\otimes n}}\!\left(\left\lceil pm\left(1-\frac{1}{d^2}\right)\right\rceil\right) \leq pd + \sqrt{d\ln d}.$$
\end{Claim}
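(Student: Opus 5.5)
The argument is the same random thinning used for Claim~\ref{cl:easy-prob-constr}; nothing in it depends on the host graph beyond the prescribed maximum degree of the starting set. We fix $F\subseteq V(K_k^{\otimes n})$ with $|F|=m$ and $\Delta(K_k^{\otimes n}[F])=d$, which exists by definition of $\Delta_{K_k^{\otimes n}}(m)$. We then take a $p$-random subset $F_p\subseteq F$, keeping each vertex independently with probability $p$, and define the bad set
$$B:=\{x\in F_p:\deg_{F_p}(x)>pd+\sqrt{d\ln d}\}.$$

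First we bound $\Pp[x\in B]$ for each $x\in F$. Conditional on $x\in F_p$, the degree $\deg_{F_p}(x)$ is a sum of $\deg_F(x)\leq d$ independent Bernoulli$(p)$ variables, since the neighbours of $x$ are distinct from $x$. If $\deg_F(x)=0$ there is nothing to prove. Otherwise $pd+t\geq p\deg_F(x)+t$, so Hoeffding's inequality with $t=\sqrt{d\ln d}$ gives
$$\Pp[x\in B]\leq p\exp\bigl(-2t^2/\deg_F(x)\bigr)\leq p\exp\bigl(-2t^2/d\bigr)=p\,d^{-2}.$$
Summing over $x\in F$ yields $\mathbb{E}|B|\leq pm/d^2$. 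Since $\mathbb{E}|F_p|=pm$, we get
$$\mathbb{E}\bigl(|F_p|-|B|\bigr)\geq pm\left(1-\frac{1}{d^2}\right).$$

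Fix a realization with $|F_p\setminus B|\geq pm(1-d^{-2})$. As $|F_p\setminus B|$ is an integer, it is at least $\lceil pm(1-d^{-2})\rceil$. Deleting vertices never increases degrees in an induced subgraph, so every vertex of $F_p\setminus B$ has degree at most $pd+\sqrt{d\ln d}$ in $K_k^{\otimes n}[F_p\setminus B]$. Passing to any subset of size exactly $\lceil pm(1-d^{-2})\rceil$ preserves this bound, which proves the claim.

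There is no real obstacle here. The only points needing care are using $\deg_F(x)\leq d$ in both the shift of the threshold and the exponent, and handling degree-zero vertices separately. The hypothesis $d\geq2$ just ensures that $\ln d>0$, so the deviation term is meaningful.
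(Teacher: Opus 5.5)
Your proposal is correct and follows the paper's proof essentially step for step. You use the same $p$-random thinning of an optimal set $F$, the same Hoeffding bound on $\deg_{F_p}(x)$ conditional on $x\in F_p$ giving $\Pp[x\in B]\leq p\,d^{-2}$, and the same expectation argument to find a realization with at least $\lceil pm(1-d^{-2})\rceil$ good vertices. Your explicit remarks on the threshold shift $pd+t\geq p\deg_F(x)+t$ and on integrality are minor clarifications of steps the paper leaves implicit.
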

\begin{proof}
  Let $G$ denote the ambient product graph, and choose $F\subseteq V(G)$ with $|F|=m$ and $\Delta(G[F])=d$. Let $F_p$ be a $p$-random subset of $F$, and set
  $$B:=\{x\in F_p:\deg_{F_p}(x)>pd+\sqrt{d\ln d}\}.$$
  Conditional on $x\in F_p$, the random variable $\deg_{F_p}(x)$ is a sum of $\deg_F(x)$ independent Bernoulli variables. Hence, for $t>0$ and $\deg_F(x)>0$, the Chernoff--Hoeffding bound gives
  $$\Pp[x\in B]\leq p\,\Pp[\deg_{F_p}(x)\geq p\deg_F(x)+t\mid x\in F_p]
  \leq p\exp\!\left(-\frac{2t^2}{\deg_F(x)}\right)
  \leq p\exp\!\left(-\frac{2t^2}{d}\right).$$
  (The assertion is trivial when $\deg_F(x)=0$.) Taking $t=\sqrt{d\ln d}$ and summing over $x\in F$ yields $\mathbb{E}|B|\leq mp/d^2$. Therefore
  $$\mathbb{E}(|F_p|-|B|)\geq pm\left(1-\frac{1}{d^2}\right).$$
  For some realization, deleting $B$ leaves at least $\left\lceil pm(1-d^{-2})\right\rceil$ vertices and maximum degree at most $pd+\sqrt{d\ln d}$. Taking a subset of the required size proves the claim.
\end{proof}

\begin{Lemma}
  For any $s < \lambda \leq s + 1$ we have $$\Delta_{K_k^{\otimes n}}(\lambda\cdot k^{n-1}) \geq (k-1)^{n-1}\frac{\lambda s}{s+1}\Bigg(1 - 20\sqrt\frac{\ln ((k-1)^{n-1})}{(k-1)^{n-1}}\Bigg)\Bigg(1 - 7\sqrt{\frac{(s+1)^2}{s(k-1)}}\Bigg).$$
\end{Lemma}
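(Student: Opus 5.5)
We plan to derive this lower bound from Lemma~\ref{lem-jumps} through a reverse random-thinning argument, in the spirit of Claim~\ref{cl:easy-prob-constr-k}. The heuristic: if a set of size $\lambda k^{n-1}$ with $\lambda > s$ had maximum degree substantially below $\frac{\lambda s}{s+1}(k-1)^{n-1}$, then a random subset of relative density $p \approx s/\lambda$ would have more than $sk^{n-1}$ elements. That subset would also have maximum degree roughly $p$ times the original, hence below $\frac{s^2}{s+1}(k-1)^{n-1}$, contradicting Lemma~\ref{lem-jumps}.

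\textbf{Setup.} Suppose $F$ has $|F| = \lambda k^{n-1}$ and $\Delta(K_k^{\otimes n}[F]) = d$. If $d < 2$ the statement follows directly from Lemma~\ref{lem-jumps} with $\lambda > s$, since already a subset of size $sk^{n-1}+1$ would have degree below the threshold; so assume $d \geq 2$. Apply Claim~\ref{cl:easy-prob-constr-k} to $F$ with a parameter $p$ chosen so that
\[
\left\lceil p\,\lambda k^{n-1}\left(1-\frac{1}{d^2}\right)\right\rceil \geq s k^{n-1}+1 .
\]
The choice will be $p = \frac{s}{\lambda}\bigl(1+O(d^{-2}+k^{-(n-1)})\bigr)$. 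If $p \ge 1$ the claim is trivial, because then $\lambda$ is essentially $s$ and Lemma~\ref{lem-jumps} applies directly. The thinning claim yields a set of size at least $sk^{n-1}+1$ with maximum degree at most $pd+\sqrt{d\ln d}$. Lemma~\ref{lem-jumps} then gives
\[
pd + \sqrt{d\ln d} \;\geq\; (k-1)^{n-1}\left(\frac{s^2}{s+1} - 7\sqrt{\frac{s^3}{k-1}}\right) = (k-1)^{n-1}\frac{s^2}{s+1}\left(1 - 7\sqrt{\frac{(s+1)^2}{s(k-1)}}\right).
\]
The equality holds because $7\sqrt{s^3/(k-1)}\cdot\frac{s+1}{s^2} = 7\sqrt{(s+1)^2/(s(k-1))}$, which produces exactly the second factor in the statement.

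\textbf{Solving for $d$.} Dividing by $p \approx s/\lambda$ gives
\[
d\left(1+\frac{\sqrt{d\ln d}}{pd}\right) \geq \frac{\lambda s}{s+1}(k-1)^{n-1}(\cdots).
\]
We may assume $d \le \lambda s(k-1)^{n-1}/(s+1)$, since otherwise there is nothing to prove. Under this assumption $\sqrt{\ln d/d}$ can be compared with $\sqrt{\ln D/D}$ for $D := (k-1)^{n-1}$. The main technical step is this one: we must show that the relative error $\frac{\sqrt{\ln d/d}}{p}$ is at most $20\sqrt{\ln D/D}$. Two issues arise. First, $d$ could be much smaller than $D$, so $\ln d/d$ is not monotonically controlled by $\ln D/D$. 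Second, $1/p \le \lambda/s \le 2$ contributes a constant.

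To handle the first issue, we argue as follows. From the displayed inequality, $d$ is at least a constant fraction of $D$, since $\frac{s^2}{s+1}\ge \frac12$ and $k\ge 1000s^2$ make the bracket at least $1/2$; in particular $d \ge D/4$. Because $x\mapsto \ln x/x$ is decreasing for $x\ge e$, we get $\sqrt{\ln d/d} \le 2\sqrt{\ln D/D}$ once $D \ge e^2$. With $1/p\le 2$ this gives a total relative error of at most $4\sqrt{\ln D/D}$.

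The remaining corrections come from the factor $(1-1/d^2)$ and from the ceiling in the choice of $p$. Both are of order $D^{-1}$, which is much smaller than $\sqrt{\ln D/D}$. All of these terms fit inside the generous constant $20$.

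Finally, $(1+x)^{-1} \geq 1-x$ converts the lower bound into the product form stated in the lemma. The degenerate case of very small $D$, where $20\sqrt{\ln D/D}\ge 1$, makes the bound vacuous.
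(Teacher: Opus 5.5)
Your proposal is correct and follows the paper's proof essentially step for step. Like the paper, you thin a set of size $\lambda k^{n-1}$ with $p=\frac{s}{\lambda}(1+O(d^{-2}))$ via Claim~\ref{cl:easy-prob-constr-k} to obtain more than $sk^{n-1}$ vertices, then contradict Lemma~\ref{lem-jumps}, using $d\ge D/4$ and $1/p\le 2$ to compare $\sqrt{\ln d/d}$ with $\sqrt{\ln D/D}$. The only small point is the bound $d\ge D/4$: it comes most cleanly, as in the paper, from applying Lemma~\ref{lem-jumps} directly to $F$ itself (which has more than $sk^{n-1}$ elements), rather than from the thinned inequality.
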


\begin{proof}
  Let $D:=(k-1)^{n-1}$ and suppose, for a contradiction, that some $F$ of size $\lambda k^{n-1}$ induces maximum degree $d$ strictly smaller than the asserted lower bound. If $20\sqrt{\ln D/D}\geq1$, the asserted bound is nonpositive and there is nothing to prove, so assume otherwise. Lemma~\ref{lem-jumps} and the standing assumption $k\geq1000s^2$ give $d\geq D/4$.

Apply Claim~\ref{cl:easy-prob-constr-k} with
$$p=\left(1+\frac{2}{d^2}\right)\frac{s}{\lambda}.$$
As before, if $p\geq1$, the conclusion follows directly from Lemma~\ref{lem-jumps} and monotonicity, so we may assume $p<1$. Then $p|F|(1-d^{-2})>sk^{n-1}$. Moreover, using $d\geq D/4$, $s/\lambda\geq1/2$, and $d\leq(k-1)^n$, we obtain
$$
\left(1+\frac{2}{d^2}\right)\frac{s}{\lambda}
+\sqrt{\frac{\ln d}{d}}
\leq\frac{s}{\lambda}\left(1+10\sqrt{\frac{\ln D}{D}}\right).
$$
Therefore
$$
pd+\sqrt{d\ln d}
< D\frac{s^2}{s+1}
\left(1-7\sqrt{\frac{(s+1)^2}{s(k-1)}}\right)
=D\left(\frac{s^2}{s+1}-7\sqrt{\frac{s^3}{k-1}}\right),
$$
contradicting Lemma~\ref{lem-jumps}.
\end{proof}

\begin{Lemma}
  For any $s < \lambda \leq s + 1$ we have
  $$\Delta_{K_k^{\otimes n}}(\lambda\cdot k^{n-1}) \leq (k-1)^{n-1}\frac{\lambda s}{s+1}\Bigg(1 + 2\sqrt\frac{\ln ((k-1)^{n-1})}{(k-1)^{n-1}}\Bigg).$$
\end{Lemma}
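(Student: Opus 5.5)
We plan an explicit construction followed by random thinning via Claim~\ref{cl:easy-prob-constr-k}.

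\textbf{Step 1: the base set.} Take the canonical set
$$F_0:=[0,s]\times\Z_k^{n-1}$$
of size $(s+1)k^{n-1}$. A vertex of $F_0$ has its first coordinate in $[0,s]$. Its neighbours in $F_0$ must have a different first coordinate in $[0,s]$, giving $s$ choices, and arbitrary different values elsewhere, giving $(k-1)^{n-1}$ choices. Hence $K_k^{\otimes n}[F_0]$ is $d$-regular with
$$d=s(k-1)^{n-1}=sD, \qquad D:=(k-1)^{n-1}.$$

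\textbf{Step 2: thinning.} Apply Claim~\ref{cl:easy-prob-constr-k} to $F_0$ with $m=(s+1)k^{n-1}$ and
$$p=\frac{\lambda}{s+1}\Bigl(1-\frac1{d^2}\Bigr)^{-1}.$$
This choice makes $\lceil pm(1-d^{-2})\rceil=\lceil\lambda k^{n-1}\rceil$. When $\lambda=s+1$, or when $p\geq1$, we use $F_0$ itself, or a subset of it, directly, and the bound $sD$ is at most the claimed one. Otherwise $p<1$, and we obtain a set of size at least $\lambda k^{n-1}$ whose induced subgraph has maximum degree at most
$$pd+\sqrt{d\ln d}\leq \frac{\lambda s}{s+1}D\Bigl(1+\frac{2}{d^2}\Bigr)+\sqrt{sD\ln(sD)}.$$
Passing to a subset of the exact size only decreases degrees. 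Strictly speaking, Claim~\ref{cl:easy-prob-constr-k} is stated under the hypothesis $\Delta_{K_k^{\otimes n}}(m)=d$, but its proof uses only a set $F$ with $\Delta(G[F])\leq d$, so it applies to $F_0$.

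\textbf{Step 3: arithmetic (the delicate step).} It remains to absorb the error terms into $\frac{\lambda s}{s+1}D\cdot2\sqrt{\ln D/D}$. Since $\lambda>s$, we have $\frac{\lambda s}{s+1}\geq \frac{s^2}{s+1}\geq s/2$. So it suffices to show
$$\sqrt{sD\ln(sD)}+\frac{2sD}{s^2D^2}\leq s\sqrt{D\ln D}.$$
This is equivalent to $\sqrt{\ln(sD)/(sD)}\,\cdot\,\ldots\leq\ldots$ after dividing through, and reduces to
$$\sqrt{s\ln(sD)}\leq s\sqrt{\ln D}-o(1).$$
This holds when $\ln(sD)\leq s\ln D$ with some slack, which in turn holds once $D\geq s$. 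That is guaranteed for $n\geq 2$ since $k\geq1000s^2$. The term $2/(sD)$ is negligible. The one point needing care is $s=1$, where the inequality becomes $\sqrt{\ln D}+o(1)\leq\sqrt{\ln D}$ and there is no slack. I expect this to be the main obstacle. The fix is to use the sharper Chernoff bound with the actual degree $d=sD$ and to note that the constant $2$ in front of $\sqrt{\ln D/D}$ in the statement, versus the $\frac{s}{2}$ lower bound on $\frac{\lambda s}{s+1}$, actually leaves a factor of roughly $\lambda s/(s+1)\cdot 2 \geq s$. Redoing the comparison with $\frac{\lambda s}{s+1}\geq \frac{s}{s+1}\cdot s$, the needed inequality becomes
$$\sqrt{s\ln(sD)}\leq \frac{2s^2}{s+1}\sqrt{\ln D},$$
which for $s=1$ reads $\sqrt{\ln D}\leq\sqrt{\ln D}$ up to lower-order terms. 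If that turns out to be too tight, I would instead pick $t=\sqrt{\tfrac12 d\ln d}\cdot(1+o(1))$ in the Chernoff step. This still gives $\mathbb{E}|B|\leq mp/d$, which suffices because the deleted fraction $1/d$ can also be absorbed into $p$.
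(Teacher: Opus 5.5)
Your construction and thinning follow the paper's route exactly. You start from $[0,s]\times\Z_k^{n-1}$, which is $sD$-regular with $D=(k-1)^{n-1}$, thin it with Claim~\ref{cl:easy-prob-constr-k}, and take a subset directly when $p\ge1$. The one delicate point, as you noticed, is Step~3 at $s=1$, and your first remedy does not fix it. For $s=1$ one has $\frac{2s^2}{s+1}=1=s$, so ``redoing the comparison'' gives back the same false inequality $\sqrt{D\ln D}+O(1/D)\le\sqrt{D\ln D}$.

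The slack was lost when you replaced $\lambda$ by $s$. The paper's terse ``apply the claim'' keeps that slack implicitly. For $s=1$, its bound $pd+\sqrt{d\ln d}$ with $p=\frac{\lambda}{2}(1+2/d^2)$ is below the target iff $\frac{\lambda}{D}\le(\lambda-1)\sqrt{D\ln D}$. This holds because $\lambda k^{n-1}$ is an integer exceeding $k^{n-1}$, so $\lambda-1\ge k^{-(n-1)}$, while $D^{3/2}\sqrt{\ln D}\ge 2k^{n-1}$ for $k\ge1000$, $n\ge2$. That margin is real but tiny, and it rests on integrality.

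Your fallback, by contrast, closes the gap cleanly and should become the main argument. With $t=\sqrt{\tfrac12 d\ln d}$, Hoeffding gives $\Pp[x\in B]\le p/d$. Taking $p=\frac{\lambda}{s+1}\cdot\frac{d}{d-1}$ (when this is $<1$) then yields at least $\lceil\lambda k^{n-1}\rceil$ vertices with maximum degree at most
\[
\frac{\lambda s}{s+1}D+\frac{\lambda}{s+1}\cdot\frac{d}{d-1}+\sqrt{\tfrac12\, sD\ln(sD)} .
\]
For $s=1$ the requirement becomes $\frac{\lambda}{2}\cdot\frac{D}{D-1}\le\bigl(\lambda-\tfrac{1}{\sqrt2}\bigr)\sqrt{D\ln D}$. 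This holds with plenty of room, since $\lambda>1$ and $D\ge k-1\ge 999$. For $s\ge2$ your original estimate already had slack.

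You also owe the one-line check in the case $p\ge1$. There $p\ge1$ means $\frac{\lambda}{s+1}\ge1-\frac1d$, and $(1-\frac1d)(1+2\sqrt{\ln D/D})\ge1$ because $2\sqrt{\ln D/D}\ge\frac{1}{d-1}$. With these two repairs your proof is complete. Unlike the paper's version, it has genuine slack at $s=1$ without needing $\lambda k^{n-1}\in\Z$.
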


\begin{proof}
  Let $m=(s+1)k^{n-1}$ and $d=s(k-1)^{n-1}$. If
  $$p:=\frac{\lambda}{s+1}\left(1+\frac{2}{d^2}\right)<1,$$
  apply Claim~\ref{cl:easy-prob-constr-k} with these values. If $p\geq1$, monotonicity and the exact construction at size $m$ give $\Delta_{K_k^{\otimes n}}(\lambda k^{n-1})\leq d$. Moreover, $p\geq1$ and $\sqrt{\ln d/d}\geq d^{-2}$ imply
  $$\frac{\lambda}{s+1}\left(1+2\sqrt{\frac{\ln d}{d}}\right)\geq1,$$
  so $d$ is no larger than the displayed upper bound.
\end{proof}

\subsection{The Hamming product}

\paragraph{A specific limit.}

\begin{Claim}
For fixed $s\leq\lambda\leq s+1$,
  $$\lim_{k \to \infty}\lim_{n\to \infty}\frac{\Delta_{K_{k}^{\Box n}}(\lambda\cdot k^{n-1})}{n} = s\Big(2 - \frac{s+1}{\lambda}\Big).$$
\end{Claim}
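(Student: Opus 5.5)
We need the limit of $\Delta_{K_k^{\Box n}}(\lambda k^{n-1})/n$, which equals $(k-1)\bar\Delta$-type quantities normalized differently. Write $\alpha=\lambda/k$, so $s/k\le\alpha\le(s+1)/k$. The plan is to sandwich the quantity between the average-degree lower bound and a construction-based upper bound, and then let $k\to\infty$.

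\textbf{Lower bound.} By the average-degree formula for $K_k^{\Box n}$ proved at the start of this appendix, with $q=s$, we have
$$\Delta_{K_k^{\Box n}}(\lambda k^{n-1})\geq a_{K_k^{\Box n}}(\lambda k^{n-1})=n\,s\Big(2-\frac{s+1}{\lambda}\Big).$$
This holds exactly for every $n$ and $k$, so the liminf is at least the claimed value. The case $\lambda=s$ at the endpoint is consistent with both formulas.

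\textbf{Upper bound, first step.} I would begin with the mod-$k$ coordinate-sum construction of Claim~\cite{} style; specifically, the exact examples $F_\Box=\{x:\sum x_i \bmod k\in\{0,\dots,q-1\}\}$. These give $\Delta_{K_k^{\Box n}}(qk^{n-1})=(q-1)n$ for $q=s+1$ (and $q=s$).

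\textbf{Upper bound, interpolation.} To interpolate for $s<\lambda<s+1$, I would keep the $s$ full residue classes $0,\dots,s-1$ and add a fraction $\theta=\lambda-s$ of class $s$. To keep degrees low, that added part should not be taken at random from class $s$. Instead, restrict class $s$ using a second residue: take $x$ with $\sum x_i\equiv s \pmod k$ and with a weighted block sum, as in Statement~\ref{st-sequence1}, avoiding a set of residues of density $1-\theta$.

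A vertex in the full classes has about $(s-1)n$ neighbours among the full classes. A change of a single symbol moves the residue by $t$, and there are $(k-|t|)n/k\approx n$ ways for each small $t$. So a vertex in class $r$ receives roughly $n$ neighbours from each other full class, plus about $\theta n$ from the partial class. The partial class sees roughly $s n$ neighbours from the full classes.

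The target per-vertex degree is $s(2-(s+1)/\lambda)n$, which is a weighted average of these counts. So the construction must balance the degrees, not merely sample the partial class. This is analogous to the transformation $\alpha\to\frac{\ell}{k}\alpha$ in the previous statement (valid for prime $k$) and to Claim~\ref{cl:easy-pb-constr-Hamming}.

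\textbf{Balancing via thinning.} I would use Claim~\ref{cl:easy-pb-constr-Hamming}, the random $p$-thinning, applied to the exact example at $(s+1)k^{n-1}$ with $d=sn$ and $p=\lambda/(s+1)$. This gives maximum degree
$$\frac{\lambda s}{s+1}n+O(\sqrt{n\ln n}).$$
As $n\to\infty$ this yields an upper bound of $\frac{\lambda s}{s+1}$, which exceeds $s(2-\frac{s+1}{\lambda})$ in the interior. Hence thinning alone is insufficient.

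\textbf{Main obstacle.} The main obstacle is therefore to achieve the average-degree value. The key observation is that the $k\to\infty$ limit forgives $O(1/k)$ relative errors.

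The approach I would try is to choose the partial class as a structured subset so that its vertices have few neighbours there and in the full classes. Use coordinates restricted to small symbols, so that residue shifts to the full classes require changes $t$ of limited range. Then optimize over the parameters to match $s(2-(s+1)/\lambda)$ up to $O(1/k)$, and finally let $k\to\infty$.
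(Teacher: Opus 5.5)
Your lower bound is correct and is the same as the paper's. With $q=s$, the average-degree formula gives $\Delta_{K_k^{\Box n}}(\lambda k^{n-1})\geq ns\bigl(2-\frac{s+1}{\lambda}\bigr)$ exactly, for all $n$ and $k$.

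The gap is the upper bound, which the proposal never supplies. You rightly note that thinning gives only $\frac{\lambda s}{s+1}$. That exceeds the target by $s\frac{(s+1-\lambda)^2}{\lambda(s+1)}$. However, the replacement you sketch cannot work. Suppose the classes $\sum_i x_i\equiv 0,\dots,s-1\pmod k$ are kept in full. Take a vertex $x$ with $\sum_i x_i\equiv s$. For each coordinate $i$ and each residue $r\neq s$, exactly one new value of $x_i$ moves the sum to $r$. Hence every vertex you add from class $s$ has exactly $n$ neighbours in each full class, i.e.\ degree exactly $sn$. This holds however you choose the partial part, whether by a second residue or by restricting symbols. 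Since $sn>s\bigl(2-\frac{s+1}{\lambda}\bigr)n$ for $\lambda<s+1$, this is no better than the value at $\lambda=s+1$. You would have to thin the full classes as well, and the proposal gives no mechanism for that.

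The missing idea is to change the modulus instead of interpolating between residue classes mod $k$. Let $q:=\lceil k/\lambda\rceil-1$, so that $q<k/\lambda\leq q+1$. Take the construction of Statement~\ref{st1} with modulus $q$: balanced strings whose integer coordinate sum is $\equiv R\pmod q$.

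\begin{itemize}
\item Its density is $(1-o(1))/q>\lambda/k$ for large $n$.
\item An internal edge must change one symbol by a nonzero multiple of $q$, so its maximum degree is $(1+o(1))(k-1)n\,\bar a_k(1/q)$.
\item We have $0<\frac1q-\frac{\lambda}{k}\leq\frac{1}{q(q+1)}=O(\lambda^2/k^2)$, and $\bar a_k$ has slope at most $3$.
\end{itemize}

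Together these give $\limsup_{n}\Delta_{K_k^{\Box n}}(\lambda k^{n-1})/n\leq(k-1)\bar a_k(\lambda/k)+O(\lambda^2/k)=s\bigl(2-\frac{s+1}{\lambda}\bigr)+O(1/k)$. Letting $k\to\infty$ matches your lower bound.

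This is exactly the paper's argument. It is also where your remark that the $k\to\infty$ limit forgives $O(1/k)$ errors actually does its work: the only loss is from rounding $k/\lambda$ to an integer modulus.
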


\begin{proof}
  Let $q = \lceil \frac{k}{\lambda}\rceil - 1$. By Statement~\ref{st}, $\lim_{\alpha \to 
  \frac{1}{q}-0}\lim_{n\to \infty}\frac{\Delta_{K_k^{\Box n}}(\alpha k^{n})}{(k-1)n} = \bar a_k(\frac{1}{q})$. Since $\frac{\lambda}{k} < \frac{1}{q}$, we have $\lim_{n\to \infty}\frac{\Delta_{K_k^{\Box n}}(\lambda k^{n-1})}{(k-1)n} \leq \bar a_k(\frac{1}{q})$. It is easy to see that the derivative $\bar a_k'(\alpha) \leq 3$ on each interval. Consequently, $\bar a_k(\frac{1}{q}) \leq \bar a_k(\frac{\lambda}{k}) + 3(\frac1q - \frac{\lambda}{k}) \leq \bar a_k(\frac{\lambda}{k}) + \frac{6\lambda^2}{k^2}$ for sufficiently large $k$. Then $\lim_{k \to \infty}\lim_{n\to \infty}\frac{\Delta_{K_{k}^{\Box n}}(\lambda\cdot k^{n-1})}{n} \leq \lim_{k \to \infty}(k-1)\bar a_k(\frac{\lambda}{k}) = s\Big(2 - \frac{s+1}{\lambda}\Big)$. The converse inequality follows from the average degree bound.
\end{proof}

\paragraph{Big subsets of maximum degree 1 for $k$ with small factors.}

The following theorem was proved in~\cite{HammingMatching}.

\begin{Theorem}
  For any $k \geq 3$ and any $n$ there exists an $F_k^n \subseteq \Z_k^n$ with $|F_k^n| = k^{n-1} + 1$ and $\Delta(K_k^{\Box n}[F_k^n]) = 1$.
\end{Theorem}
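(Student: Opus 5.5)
We prove the statement by induction on $n$, generalizing the proof of the folklore Statement for $k=3$ in the introduction. For $r\in\Z_k$ we use the independent sets $A_n^r:=\{x\in\Z_k^n:\sum_i x_i\equiv r\pmod k\}$, as in the proof of Claim~\ref{cl11}. Each $A_n^r$ has size $k^{n-1}$ and is independent in $K_k^{\Box n}$.

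The strengthened inductive statement is the following. There is $M_n\subseteq\Z_k^n$ with $|M_n|=k^{n-1}+1$ and $\Delta(K_k^{\Box n}[M_n])\le 1$ (this suffices; see the end), and moreover
$$M_n\subseteq A_n^0\cup A_n^1,$$
i.e.\ every vertex of $M_n$ has coordinate sum $0$ or $1$ modulo $k$. For the base case $n=1$ take $M_1=\{0,1\}$, a single edge with coordinate sums $0$ and $1$.

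For the induction step we build $M_n$ layer by layer according to the last coordinate. Put $M_{n-1}\times\{0\}$ in layer $0$. For $j=1,\ldots,k-2$ put $A_{n-1}^{-j}\times\{j\}$ in layer $j$, and put $A_{n-1}^{-(k-2)}\times\{k-1\}$ in layer $k-1$. All residues are modulo $k$, so the class $-(k-2)\equiv 2$ is used in exactly two layers and every other class in at most one. The size is $(k^{n-2}+1)+(k-1)k^{n-2}=k^{n-1}+1$. The sums are as required: layer $0$ inherits sums in $\{0,1\}$ from $M_{n-1}$; layer $j\le k-2$ has sum $-j+j=0$; layer $k-1$ has sum $-(k-2)+(k-1)=1$.

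Next we check that the maximum degree is at most $1$. Edges inside a single layer are edges of $K_k^{\Box(n-1)}$. Inside layer $0$ they form the induced matching on $M_{n-1}$, and the other layers are copies of the independent sets $A_{n-1}^r$, so they contain none. Edges between layers $j\ne j'$ join $(x,j)$ and $(x,j')$ with the same $x$.

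For vertical edges touching layer $0$, we need $M_{n-1}$ disjoint from every class $A_{n-1}^{-j}$ with $1\le j\le k-2$. This holds because $-j\not\equiv 0,1$ for $1\le j\le k-2$, and here we use $k\ge3$. Since $M_{n-1}\subseteq A_{n-1}^0\cup A_{n-1}^1$ by the inductive hypothesis, there are no vertical edges from layer $0$.

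Among layers $1,\ldots,k-1$ the classes $-1,\ldots,-(k-2)$ are pairwise distinct. Hence the only shared vertices are between layers $k-2$ and $k-1$, both carrying $A_{n-1}^{-(k-2)}$. These contribute exactly one vertical edge at each such vertex, and those vertices have no horizontal neighbours. Therefore every vertex has degree at most $1$.

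The only delicate point is choosing the residues so that three things hold at once: the layer sums land in $\{0,1\}$, the residues avoid $\{0,1\}$ so that layer $0$ stays isolated vertically, and no class is used three times. The choice $r_j=-j$, with the repetition at $j=k-1$, handles all three. Finally, degree exactly $1$ holds because the edge $\{0,1\}\times\{0\}^{n-1}$ inherited from $M_1$ through layer $0$ persists. Alternatively, $\alpha(K_k^{\Box n})=k^{n-1}$ forces at least one edge in any set of size $k^{n-1}+1$. So $\Delta=1$ and $F_k^n:=M_n$ works.
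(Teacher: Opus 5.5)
Your proof is correct. The invariant $M_n\subseteq A_n^0\cup A_n^1$ is exactly what the induction needs, for three reasons. First, the classes $-1,\dots,-(k-2)\equiv k-1,\dots,2$ avoid $\{0,1\}$ when $k\ge3$, so layer $0$ has no vertical edges. Second, the only repeated class, $-(k-2)$, appears in layers $k-2$ and $k-1$ and produces a perfect matching between them, while all other layers are independent and vertically isolated. Third, the new layer sums, $0$ for layers $1,\dots,k-2$ and $1$ for layer $k-1$, restore the invariant. The pigeonhole bound $\alpha(K_k^{\Box n})=k^{n-1}$, or the persistent edge $\{0,1\}\times\{0\}^{n-1}$, then gives $\Delta=1$ exactly.

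The paper itself does not prove this theorem; it imports it from \cite{HammingMatching}, so there is no in-paper argument to match. The natural comparison is with the paper's own layered constructions. For $k=3$, your recursion is exactly the translate by $2e_1$ of the folklore construction $M_{n-1}\times\{0\}\cup A_0^{n-1}\times\{1,2\}$ from the introduction. Claim~\ref{cl11} extends that construction to general $k$ differently: it places the \emph{same} class $A^0_{n-1}$ in all $k-1$ nonzero layers. This creates vertical copies of $K_{k-1}$, giving maximum degree $k-2$ and $k-3$ extra vertices. Your variation rotates the residue classes across the nonzero layers so that all but two of them have pairwise disjoint prefix sets, which collapses those cliques to single edges. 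The only cost is replacing the single-class condition ``disjoint from $A^0_n$'' with the stronger two-class invariant. The result is a short, elementary, self-contained proof of the cited theorem, phrased entirely in the paper's own framework of the independent sets $A_n^r$.
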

All analogous constructions found in the literature have size $k^{n-1}+1$. This can be improved whenever $k$ has a factor other than~$2$.

\begin{Claim}
If $p\mid k$ for some $p\geq3$, then there exists an induced subgraph of $K_k^{\Box n}$ of maximum degree 1 and size $\geq k^{n-1} + (\frac{k}{p})^{n-1}$.
\end{Claim}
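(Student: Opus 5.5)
The plan is an explicit recursive construction in the spirit of the folklore induced-matching statement for $K_3^{\Box n}$ and of Claim~\ref{cl11}. The difference is that the extra $(k/p)^{n-1}$ will come from copying the previous construction into the $k/p$ last-coordinate layers indexed by multiples of $p$. Write $k=pm$ and $A_{n}^{r}:=\{x\in\Z_k^n:\sum_i x_i\equiv r\pmod k\}$; each $A_n^r$ is independent of size $k^{n-1}$. I will prove by induction on $n$ the statement that there is $M_n\subseteq\Z_k^n$ with $\Delta(K_k^{\Box n}[M_n])\le1$ and $|M_n|=k^{n-1}+m^{n-1}$, together with a structural side condition that makes the step work. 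The side condition I will aim for is that $M_n$ meets only residue classes $A_n^r$ with $r\not\equiv0\pmod p$ outside a controlled part. The base case $n=1$ is any two-element subset of $\Z_k$, which is an edge of size $k^0+m^0=2$.

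For the step, I will define $M_n$ layer by layer in the last coordinate, with $M_n=\bigsqcup_{c\in\Z_k}X_c\times\{c\}$. For the $m$ layers $c\in p\Z_k$, I will take $X_c=M_{n-1}+\tau_c$, a translate of the previous construction by a fixed vector $\tau_c$ chosen so that $\sum_i(\tau_c)_i$ runs over distinct residues of $p\Z_k$. Since translations are automorphisms, each such layer still induces maximum degree $\le1$. For the other $k-m$ layers $c\notin p\Z_k$, I will take $X_c$ to be a full independent class $A_{n-1}^{r(c)}$ for suitable residues $r(c)$. The sizes then already match:
$$m\bigl(k^{n-2}+m^{n-2}\bigr)+(k-m)k^{n-2}=k^{n-1}+m^{n-1}.$$
So the whole content is that no vertex acquires degree $2$.

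Edges of $K_k^{\Box n}$ inside $M_n$ are of two kinds. Horizontal edges stay within one layer; they are handled by the induction hypothesis for the $p\Z_k$-layers and are absent in the $A$-layers. Vertical edges join $(x,c)$ and $(x,c')$, so a vertex $(x,c)$ has vertical degree equal to the number of other layers containing $x$. The requirements are therefore as follows. First, the classes $A_{n-1}^{r(c)}$ for $c\notin p\Z_k$ must be pairwise distinct. Second, they must be disjoint from every translate $M_{n-1}+\tau_c$. Third, the translates $M_{n-1}+\tau_c$ must overlap each other only at vertices that have no horizontal neighbour, and each such vertex may lie in only two layers. Reading residues modulo $p$, the sum map sends $M_{n-1}+\tau_c$ into classes determined by the side condition shifted by $\sum\tau_c$. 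I will choose the $r(c)$ in residue classes mod $p$ that none of the translates reach. The assumption $p\ge3$ should leave at least one residue mod $p$ free besides the ones used by the translates; for $p=2$ this fails, matching the remark that only factors other than $2$ help.

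The main obstacle is exactly this bookkeeping. The three requirements in the previous paragraph must hold simultaneously, and the side condition must reproduce itself for $M_n$. The counting forces the translates together to cover $m\cdot m^{n-2}=m^{n-1}$ more vertices than a disjoint packing would. Hence some overlaps between translates are unavoidable, and they must be arranged as vertical matching edges between isolated vertices of the layers. First I will try to strengthen the hypothesis to say that $M_n$ is the union of an independent set $I_n$ of size $k^{n-1}-m^{n-1}$ (wait: a simpler form) together with a matching whose edges are vertical in one fixed direction. I will then pick the $\tau_c$ so that consecutive translates overlap exactly on $I_{n-1}$-type parts. If that fails, I will instead lift a max-degree-$1$ construction from $K_p^{\Box n}$ via the reduction homomorphism $\Z_k\to\Z_p$, following the block-sum lifting used in the Hamming-product constructions above.
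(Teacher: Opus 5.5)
\textbf{There is a genuine gap: neither your main construction nor your fallback yields a set of maximum degree $1$ as stated.}

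\textbf{The layered induction.} The bookkeeping you call the main obstacle is never resolved, and the strengthened induction hypothesis is left unformulated. In fact, the scheme with your three requirements cannot even produce $M_2$. Start from $M_1=\{a,b\}$. Every vertex of every translate $M_1+\tau_c$ has a horizontal neighbour, so your third requirement forces the $m$ translates to be pairwise disjoint, occupying $2m$ points of $\Z_k$. Your first two requirements then need $k-m$ further distinct points for the singletons $A_1^{r(c)}$, which makes $k+m>k$ points in total. More generally, your own count shows that the translates must doubly cover at least $m^{n-1}$ points. Each such point must be horizontally isolated in $M_{n-1}$ and placed compatibly with all shifts $\tau_c$ and with the residue classes used by the $A$-layers. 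The proposal gives no mechanism controlling the isolated vertices of $M_{n-1}$, and $M_1$ has none. The role of $p\ge3$ in this scheme is a hope rather than an argument.

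\textbf{The fallback.} It points in the right direction but lacks the ingredient that makes it work. The full preimage of $F_p^n\subseteq\Z_p^n$ under coordinatewise reduction $\Z_k\to\Z_p$ has maximum degree at least $n(k/p-1)$, because replacing $x_i$ by $x_i+jp$ stays inside the preimage. Block-sum lifting likewise multiplies degrees rather than preserving degree $1$.

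The paper's proof fixes this by restricting the fibre part to an independent set:
\begin{itemize}
\item Put $t=k/p$ and identify each symbol of $\Z_k$ with a pair $(u,v)\in[0,p-1]\times[0,t-1]$; any bijection works because $K_k$ is complete.
\item Take $F_p^n\subseteq\Z_p^n$ with $|F_p^n|=p^{n-1}+1$ and $\Delta(K_p^{\Box n}[F_p^n])=1$, from the theorem quoted just before the claim. This is exactly where $p\ge3$ is needed.
\item Let $F$ consist of the strings $((u_1,v_1),\ldots,(u_n,v_n))$ with $u\in F_p^n$ and $\sum_i v_i\equiv0\pmod t$.
\end{itemize}
An edge changes a single pair $(u_i,v_i)$. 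If $v_i$ changes, then $\sum_i v_i$ changes modulo $t$, so inside $F$ only $u_i$ can change. These edges correspond exactly to edges of $K_p^{\Box n}[F_p^n]$. Hence $\Delta(K_k^{\Box n}[F])\le1$ and $|F|=(p^{n-1}+1)t^{n-1}=k^{n-1}+(k/p)^{n-1}$.

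Your proposal is missing precisely the product with the independent set $\{\sum_i v_i\equiv0\}$ of $K_t^{\Box n}$, together with an explicit appeal to the input set $F_p^n$.
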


\begin{proof}
  Set $t:=k/p$. Fix a bijection between $[0,k-1]$ and $[0,p-1]\times[0,t-1]$, and use it to write each element of $[0,k-1]$ as a pair. Let $F_p^n$ be the set supplied by the preceding theorem, and let $A_t^n:=\{x\in[0,t-1]^n:\sum_i x_i\equiv0\pmod t\}$. Define

  $$F = \{((x_1,y_1),(x_2,y_2),\ldots,(x_n,y_n)) : (x_1,\ldots,x_n) \in F_p^n, (y_1,\ldots,y_n)\in A^n_t\}.$$

  Clearly, the subgraph induced by $F$ has maximum degree at most~$1$ and $|F| = (p^{n-1} + 1)t^{n-1} = k^{n-1} + (\frac{k}{p})^{n-1}$.
\end{proof}

\end{document}